\newtheorem{theorem}{Theorem}
\newtheorem{lemma}[theorem]{Lemma}
\newtheorem{corollary}[theorem]{Corollary}
\newtheorem{proposition}[theorem]{Proposition}
\newtheorem{claim}[theorem]{Claim}
\newtheorem{definition}[theorem]{Definition}
\newcommand{\F}{{\mathbb F}}
\def\scalprod#1#2{({#1}\, |\, {#2})}
\def\dim{{\rm dim}}
\newcommand{\beeq}{\begin{eqnarray*}}
\newcommand{\eneq}{\end{eqnarray*}}
\def\Z{\mathbb Z}
\newcommand{\QQ}{\mathcal{Q}}
\newcommand{\LL}{\mathcal{L}}
\newcommand{\CC}{\mathcal{C}}
\newcommand{\RR}{\mathcal{R}}
\newcommand{\A}{\mathcal{A}}
\newcommand{\C}{{\mathbb{C}}}
\newcommand{\Q}{{\mathbb{Q}}}
\newcommand{\cS}{\mathcal{S}}
\newcommand{\SSS}{\mathcal{S}}
\newcommand{\TT}{\mathcal{T}}
\newcommand{\OO}{\mathcal{O}}
\newcommand{\cX}{\EuScript{X}}
\newcommand{\rk}{\operatorname{rk}}
\newcommand{\wt}{\operatorname{wt}}
\newcommand{\Gl}{\operatorname{Gl}}
\newcommand{\Rad}{\operatorname{Rad}}
\newcommand{\1}{{\bf \operatorname{1}}}
\newcommand{\codim}{\operatorname{codim}}
\newcommand{\dimn}{\dim_{}}
\newcommand{\be}{\begin{equation}}
\newcommand{\ee}{\end{equation}}
\author{Christine Bachoc} 
\address{Institut de Math\'ematiques de Bordeaux, UMR 5251, universit\'e de Bordeaux, 351 cours de la Lib\'eration, 33400 Talence, France.}
\email{Christine.Bachoc@math.u-bordeaux.fr}
\author{Oriol Serra}
\address{Universitat Polit\`ecnica de Catalunya, Matem\`atica Aplicada IV, M\`odul C3, Campus Nord,  Jordi Girona, 1, 08034 Barcelona, Spain.}
\email{oserra@ma4.upc.edu}
\author{Gilles Z\'emor}
\address{Institut de Math\'ematiques de Bordeaux, UMR 5251, universit\'e de Bordeaux, 351 cours de la Lib\'eration, 33400 Talence, France.}
\email{zemor@math.u-bordeaux.fr}
\title[]{An analogue of Vosper's Theorem for Extension Fields}
\begin{document}

\begin{abstract} We are interested in characterising pairs $S,T$ of
  $F$-linear subspaces in a field extension $L/F$ such that the linear
  span $ST$ of the set of products of elements of $S$ and of elements
  of $T$ has small dimension. Our central result is
  a linear analogue of Vosper's Theorem, which
  gives the structure of vector spaces $S, T$ in a prime extension $L$
  of a finite field $F$ for which
$$
\dim_FST =\dim_F S+\dim_F T-1,
$$
when $\dim_F S, \dim_F T\ge 2$ and $\dim_F ST\le [L:F]-2$.
\end{abstract}

\maketitle

\section{Introduction}
Inverse problems in additive theory aim to provide structural results
of sets in an additive group which have a small subset sum. Motivated
by  a problem on difference sets, Hou, Leung and Xiang \cite{hou2002}
obtained a linear analogue of one of the central results in the area, the
theorem of Kneser \cite{knesrcomp}, in which cardinalities of sets in
an abelian group are substituted by dimensions of subspaces over a
field. 
To be specific, let $F$ be a field and let $L$ be an extension field of
$F$. If $S$ and $T$ are $F$-subvector spaces of $L$, we shall denote
by $ST$ the $F$-linear span of the set of products $st$, $s\in S$,
$t\in T$. The linear analogue of Kneser's theorem can be stated as follows.

\begin{theorem}[Hou, Leung and Xiang] \label{thm:kneser}
Let $F\subset L$ be fields and let $S,T$ be $F$-subvector spaces  of
$L$ of finite dimension. 
Suppose that every algebraic element in $L$ is separable over $F$. Then
$$
\dimn ST\ge \dimn S+\dimn T-\dimn H(ST),
$$
where $H(ST)=\{x\in L: xST\subset ST\}$ denotes the stabilizer of $ST$ in $L$.
\end{theorem}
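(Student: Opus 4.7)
My plan would adapt Kneser's classical strategy via a linear analogue of the Dyson $e$-transform. For any $e \in L^*$, set
$$
S_e = S + eT, \qquad T_e = T \cap e^{-1}S.
$$
The identity $\dim(A+B) + \dim(A\cap B) = \dim A + \dim B$ gives $\dim S_e + \dim T_e = \dim S + \dim T$, and expanding the product yields
$$
S_e T_e \;\subseteq\; ST + eT\cdot e^{-1}S \;=\; ST.
$$
So this transform preserves the total dimension, keeps the product inside $ST$, and strictly decreases $\dim T$ whenever $0\neq T_e\subsetneq T$.

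Before the induction, I would verify that $H:=H(ST)$ is an $F$-subfield of $L$. Multiplicative closure is immediate, and additive closure follows because $(x+y)ST\subseteq ST$ with equal finite $F$-dimensions forces equality whenever $x+y\neq0$. Picking any nonzero $y\in ST$, the map $x\mapsto xy$ injects $H$ into $ST$, so $\dim_F H\leq \dim_F ST$ is finite; being a finite-dimensional $F$-subalgebra of the field $L$, $H$ is itself a field. The hypothesis that algebraic elements of $L$ are separable over $F$ then makes $H/F$ separable, which I expect to enter crucially when comparing stabilizers across the induction.

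The induction is on $\dim T$. The base case $\dim T=1$ gives $ST=t_0 S$, so $\dim ST=\dim S\geq \dim S+1-\dim H$ since $\dim H\geq 1$. For the inductive step, if some $e\in L^*$ satisfies $0\neq T_e\subsetneq T$, I would apply the induction hypothesis to the smaller pair $(S_e,T_e)$; otherwise the dichotomy \emph{$eT\subseteq S$ or $eT\cap S=0$} holds for every $e\in L^*$, and the $F$-subspace $E:=\{e\in L:eT\subseteq S\}$ should, with the help of separability, turn out to be a subfield of $L$ embedding into $H(ST)$, yielding the bound directly from $ET\subseteq S\subseteq ST$. The main obstacle I anticipate is that $H(S_eT_e)$ need not be controlled by $H(ST)$, so a naive induction loses track of the right-hand side; the standard workaround is to pass to an extremal pair $(S',T')$ with $S\subseteq S'$, $T'\subseteq T$, $S'T'\subseteq ST$ and $\dim S'+\dim T'=\dim S+\dim T$ minimizing $\dim T'$, so that the stable dichotomy applies directly to $(S',T')$ and the resulting subfield can be identified with (part of) $H(ST)$ using separability.
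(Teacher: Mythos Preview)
The paper does not prove Theorem~\ref{thm:kneser}; it is quoted from \cite{hou2002} as background and used only through its Cauchy--Davenport corollary (Theorem~\ref{thm:Cauchy}). So there is no ``paper's own proof'' to compare against. Your outline via a linear Dyson $e$-transform is in fact the route taken in \cite{hou2002}, and the elementary verifications you give (that $S_eT_e\subseteq ST$, that $\dim S_e+\dim T_e=\dim S+\dim T$, that $H(ST)$ is a finite subfield of $L$) are correct.

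That said, your sketch does not yet close the argument. You correctly isolate the difficulty: after a transform, $S_eT_e$ may be a proper subspace of $ST$, and then $H(S_eT_e)$ can be strictly larger than $H(ST)$, so an induction hypothesis applied to $(S_e,T_e)$ gives a bound involving the wrong stabilizer. Your proposed remedy, passing to an extremal pair $(S',T')$ and asserting that in the stable dichotomy the subspace $E=\{e:eT'\subseteq S'\}$ ``should, with the help of separability, turn out to be a subfield embedding into $H(ST)$'', is where the real content lies, and you have not supplied it. Two specific points are missing. First, $E$ is visibly an $F$-subspace, but nothing you have written shows it is multiplicatively closed, nor that it (or the field it generates) stabilises $ST$ rather than merely $S'T'$. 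Second, you never actually use the separability hypothesis; in \cite{hou2002} separability enters precisely to control what happens when one enlarges $S'$ to $H(S'T')S'$ and compares stabilisers, via a dimension argument over the intermediate field that fails in the inseparable case. Until that step is made explicit, the inductive loop does not close.
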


It was recently shown by the present authors that
Theorem~\ref{thm:kneser} continues to hold without the requirement
that every algebraic element in $L$ be separable over $F$ \cite{BSZ}.

One of the remarkable features of Theorem~\ref{thm:kneser} is that the
original theorem of Kneser can be obtained as a corollary. It can
therefore be viewed not only as a transposition of an additive result, but as a generalisation.
This gives extra motivation for studying the translation of additive theory to
its linear counterpart, since we may obtain insight on the original
additive results and methods.

Eliahou and Lecouvey \cite{eliahou2009} obtained similar linear
analogues of classical additive theorems including theorems of Olson
\cite{olson} and Kemperman \cite{kemperman} in nonabelian
groups. Lecouvey \cite{lecouvey2011} pursued this direction by
obtaining, among other extensions,  linear versions of the
Pl\"unecke--Ruzsa \cite{ruzsa} inequalities.

Going back to the statement of Theorem~\ref{thm:kneser},
since the stabilizer $H(X)$ of a finite dimensional subspace is a subfield
of $L$ of finite dimension, if $L/F$ is an extension with no proper
finite intermediate extension, then either $H(X)=F$ or, if $\dim_FL$
is finite, $H(X)=L$. Hence, one obtains a linear analogue of the Cauchy--Davenport
inequality, which states that in an extension without a proper finite intermediate
extension (in particular in finite extensions of prime degree), the
dimension of the product of two subspaces $S$ and $T$ is either the
whole extension, or has dimension at least
$\dimn S+\dimn T-1$.
Strictly speaking, if we do not suppose the extension to be separable,
we need to call upon the strengthened version \cite{BSZ} of
Theorem~\ref{thm:kneser}. We note however, that
the linear analogue of the Cauchy-Davenport Theorem
was obtained in full generality, without any separability hypothesis, by
Eliahou and Lecouvey (see \cite{eliahou2009} Theorem 6.2). Let us
state it formally. When $L/F$ is an extension we shall say that $F$
is {\em algebraically closed in $L$} if every element of $L$ that is
algebraic over $F$ is in $F$.

\begin{theorem}\label{thm:Cauchy} Let $L/F$ be an extension such that
 $F$ is algebraically closed in $L$. For every pair of subspaces
  $S,T\subset L$ of finite dimension over $F$,
  \begin{equation}
    \label{eq:cauchy}
    \dimn ST\ge \min\{\dimn L,\dimn S +\dimn T-1\}.
  \end{equation}
\end{theorem}

One of the first inverse statements of additive theory is the theorem
of Vosper \cite{vosper} which states that, in a group of prime order, a pair of sets
attaining equality in the Cauchy--Davenport inequality are, except for
some degenerate cases, arithmetic progressions. In the present paper,
our main result is a transposition of  Vosper's theorem to the linear
setting which reads:

\begin{theorem}\label{thm:main} Let $F$ be a finite field and let
$L$ be an extension of prime degree $p$ of $F$. Let $S, T$ be
subspaces of $L$ such that $2\le \dimn S, \dimn T$ and $\dimn ST\le p-2$. If
\begin{equation}
  \label{eq:dim(ST)}
  \dimn ST=\dimn S+\dimn T-1,
\end{equation}
then $S$ and $T$ have bases of the form
$\{g,ga,\ldots ga^{\dim S-1}\}$ and $\{ g', g'a,\ldots ,g'a^{\dim T
  -1}\}$ for some $g,g',a\in L$.
\end{theorem}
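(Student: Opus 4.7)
The overall strategy is to exhibit an $a \in L \setminus F$ such that both $S$ and $T$ are geometric progressions with common ratio $a$. Replacing $S$ and $T$ with $s_0^{-1}S$ and $t_0^{-1}T$ for nonzero $s_0 \in S$ and $t_0 \in T$, we may assume $1 \in S \cap T$; this normalisation affects neither the hypothesis nor the form of the conclusion, only the scalars $g, g'$.

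The first step I would establish is a structure lemma: if $a \in L \setminus F$ and $\dim(S + aS) = \dim(S) + 1$, then $S = g \langle 1, a, \ldots, a^{\dim(S) - 1}\rangle$ for some $g \in L^\times$. Since $L/F$ is a prime extension and $a \notin F$, we have $F[a] = L$, so $L$ has no proper nonzero $a$-stable $F$-subspace. Picking $v \in aS \setminus S$, the hypothesis yields $aS \subseteq S + Fv$; iterating, the chain $V_n := S + aS + \cdots + a^n S$ satisfies $V_n \subseteq S + \langle v, av, \ldots, a^{n-1}v\rangle$, hence $\dim(V_n) \le \dim(S) + n$. Because the sequence $V_n$ can only stabilise when it becomes $a$-stable, which must then equal $L$, we conclude that $\dim(V_n) = \dim(S) + n$ for $n \leq p - \dim(S)$ and $V_{p - \dim(S)} = L$. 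Unwinding these tight equalities produces a direct sum decomposition $L = S \oplus v F[a]_{p - \dim(S)}$ (where $F[a]_m := \langle 1, a, \ldots, a^{m-1}\rangle$), from which an appropriate generator $g$ can be extracted to yield the geometric-progression basis of $S$.

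The second step is to find a common ratio $a$ that works simultaneously for $S$ and $T$. I would proceed by induction on $\dim(T)$, swapping the roles of $S$ and $T$ if needed. The base case $\dim(T) = 2$ is immediate: then $T = \langle 1, a\rangle$ for some $a \notin F$, the hypothesis becomes $\dim(S + aS) = \dim(S) + 1$, and the structure lemma applies to $S$ while $T$ itself is trivially a geometric progression in $a$. For the inductive step $\dim(T) \geq 3$, I would seek a hyperplane $T' \subset T$ containing $1$ satisfying the tight equality $\dim(ST') = \dim(S) + \dim(T') - 1$; induction then furnishes a common $a$ for $(S, T')$, and one extends the structure from $T'$ to $T$ by analysing where an element $t \in T \setminus T'$ must lie modulo $T'$, using $\dim(ST) = \dim(ST') + 1$ together with the now-known form of $S$.

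The principal difficulty lies in the inductive step: showing that a hyperplane $T'$ preserving the tight equality exists. By Theorem~\ref{thm:Cauchy}, any hyperplane $T' \subset T$ containing $1$ gives $\dim(ST') \in \{\dim(ST) - 1, \dim(ST)\}$, yet a priori every such $T'$ might fall into the saturated case $\dim(ST') = \dim(ST)$, killing the induction. Ruling this out, or else bypassing it through a linear analogue of Dyson's transform $(S, T) \mapsto (S + eT, \, e^{-1}S \cap T)$ --- which preserves $\dim(S) + \dim(T)$ and keeps the product inside $ST$ --- is the technical heart of the argument. The constraint $\dim(ST) \leq p - 2$ is crucial in leaving room for these structural reductions to operate strictly inside $L$.
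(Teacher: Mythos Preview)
Your first step---the structure lemma that $\dim(S+aS)=\dim(S)+1$ forces $S$ to be a geometric progression with ratio $a$---is correct and is essentially the paper's Lemma~\ref{lem:prog}; together with the passage from $S$ to $T$ this is the content of Section~\ref{sec:critical}. Where your proposal parts company with the paper, and where it has a genuine gap, is the inductive step: you acknowledge that producing a hyperplane $T'\subset T$ with $\dim(ST')=\dim(ST)-1$, or alternatively running a linear Dyson transform, is ``the technical heart'', but you do not carry either out.

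There is strong evidence that neither can be completed along the elementary lines you suggest. The paper exhibits, over $F=\Q$ in an extension with no proper finite intermediate extensions, a three-dimensional Sidon space $A$ with $\dim(A^2)=5=2\dim(A)-1$ that has \emph{no} basis in geometric progression (the examples at the end of Section~\ref{sec:sidon}). Thus the conclusion of Theorem~\ref{thm:main} is genuinely false over some base fields, and any proof must use a property specific to finite $F$. Your hyperplane induction and the Dyson transform $(S,T)\mapsto(S+eT,\,e^{-1}S\cap T)$ are field-insensitive: if either argument closed as sketched, it would also prove the statement over $\Q$, contradicting the counterexample. So at minimum an additional ingredient exploiting $|F|<\infty$ is missing, and you give no indication of what it would be.

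The paper's route is quite different. It develops a linear isoperimetric method (Section~\ref{sec:con}) and shows that the $2$-atom $A$ of $S$---a minimal-dimension witness to $\kappa_2(S)=\dim(S)-1$---must be a Sidon space satisfying $\dim(A^2)=2\dim(A)-1$ (Sections~\ref{sec:2atom}--\ref{sec:sidon}). Finiteness of $F$ enters only at the last step: a Delsarte linear-programming bound on codes of quadratic forms over $\F_q$ (Section~\ref{sec:codes}) shows that no Sidon space of dimension $\geq 3$ can satisfy $\dim(A^2)=2\dim(A)-1$, forcing $\dim(A)=2$ and hence furnishing the $a$ you need.
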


We shall see that the conclusion of Theorem~\ref{thm:main}
continues to hold if $L$ is replaced by an infinite-dimensional extension of the
finite field $F$ containing no algebraic extension of~$F$ besides $F$.
This last result will be seen to result from Theorem~\ref{thm:main}
through a valuation argument, and can also be derived by the same
methodology that leads to Theorem~\ref{thm:main}. 

The question naturally arises whether the conclusion of
Theorem~\ref{thm:main} holds for {\em any} extension~$L/F$ that does
not contain proper algebraic extensions of $F$,
not just when $F$ is a finite field. 
The answer turns out to be negative, and we shall give a
counter-example for some infinite extension~$L/F$, and also for a
finite extension. 
Our general strategy does not require the base field to be finite,
and we shall obtain some partial results in the case of arbitrary
fields~$F$,
but the complete picture remains unclear. 

The proof of Theorem~\ref{thm:main} will import ideas from additive
combinatorics, and also use results on quadratic forms and what has
become known as the linear programming method in the theory of
error-correcting codes. Since the route to Theorem~\ref{thm:main} is fairly
long and devious we give an overview.

\subsection*{Methodology and overview of the proof of
  Theorem~\ref{thm:main}.}
With a view to characterising spaces $S,T$ that achieve equality in \eqref{eq:cauchy},
we shall 
consider subspaces $A$ of $L$ of {\em minimum dimension} (but at least
equal to $2$) such that 
\begin{equation}
  \label{eq:SA}
  \dimn SA=\dimn S +\dimn A-1.
\end{equation}
We shall first remark that if we can prove that such a subspace
has dimension exactly~$2$, and has a basis of the form $1,a$, then it
follows fairly straightforwardly that $S$ must have a basis in
geometric progression.
From then on we work towards showing that such a subspace $A$ must
have dimension $2$, at least in the case when the base field is finite.

With this objective, we shall transpose to the linear setting the so--called
isoperimetric method 
in additive combinatorics, a method largely due to the late Hamidoune
\cite{hamidoune}, which has  proved to be a particularly useful tool
in dealing with additive problems. 
We shall show that the subspaces
$A$ of \eqref{eq:SA}, that we call {\em atoms} by analogy with the
classical additive setting, must satisfy intersection properties.
In particular we shall show that an atom $A$ must satisfy the
following:
$$\text{If}\; x,y,z,t \in A,
\quad \text{and}\quad
xy=zt\quad \text{then}\quad \{Fx,Fy\} = \{
Fz,Ft\}$$
We call a subspace with this property a {\em Sidon space} by analogy
with Sidon sets of integers which are sets such that, for
any $x,y,z,t$ in the set,
$$x+y=z+t \quad \text{implies} \quad \{x,y\}=\{z,t\}.$$
Furthermore, we shall arrive at the conclusion that atoms must have a
product of small dimension, namely: 
\begin{equation}
  \label{eq:dim(A^2)}
  \dimn A^2 =2\dimn A - 1.
\end{equation}
It remains to prove that this last property is incompatible with the
Sidon property for any subspace $A$ of dimension $n$ greater than $2$. To
obtain this we shall need more tools: we shall consider the space of
quadratic forms over the base field $F$ in the variables
$x_1,\ldots , x_n$ together with the natural mapping $\Phi$ into $A^2$
deduced from the correspondence:
$$x_1\mapsto a_1,\ldots , x_n\mapsto a_n$$
where $a_1,\ldots a_n$ is a basis of $A$. 
We make the observation that any non-zero quadratic form that is of the form
$\ell_1\ell_1'+\ell_2\ell_2'$ where $\ell_1,\ell_1',\ell_2,\ell_2'$ are
$F$-linear expressions in $x_1,\ldots , x_n$ cannot map to zero by
$\Phi$: this is a reformulation of the Sidon property. This motivates
introducing the notion of {\em weight} of a quadratic form $Q$.

We shall
say that the zero quadratic form has weight $0$, that 
$Q$ has weight $1$ if it is the product of two non-zero linear forms,
and inductively
for $t>1$ that it has weight $t$ if it is the sum of~$t$ quadratic
forms of weight $1$ and is not of weight $<t$. This slightly
non-standard notion is related to the quadratic form's rank, but
behaves somewhat differently. The Sidon property of~$A$ means
therefore that the kernel of the mapping $\Phi$ is a subspace~$\CC$ of the
space of quadratic forms with minimum non-zero weight at least $3$.
On the other hand, property \eqref{eq:dim(A^2)} implies that $\CC$ must
be a space of large dimension. The rest of the proof of
Theorem~\ref{thm:main}
consists in showing that such a large space $\CC$ of quadratic forms of
weight at least~$3$ cannot exist over finite fields.

Specifically, we shall show that if $F$ is the finite field with $q$
elements and $\CC$ is a set of quadratic forms over $F^n$, any two of which
differ by a quadratic form of weight at least~$3$, then 
the cardinality $|\CC|$ of $\CC$ must satisfy:
\begin{equation}
  \label{eq:(n-1)(n-2)/2}
  |\CC| < q^{(n-1)(n-2)/2}.
\end{equation}
This is the minimal result that we need to finish the proof of
Theorem~\ref{thm:main}. To the best of our knowledge, packing problems
in the space of quadratic forms endowed with the aforementioned weight 
distance have not been considered before, making
\eqref{eq:(n-1)(n-2)/2} of possibly independent interest.
Loosely connected packing problems have been studied before for the
related rank distance, in the space of bilinear forms \cite{Del1}, in
the space of alternating bilinear forms \cite{Del2}, and more recently
in the space of symmetric bilinear forms \cite{Sch1,Sch2}.
The above papers have all applied what has become known as the
Delsarte linear programming method, a powerful approach for deriving
upper bounds on the size of codes (packings) initiated in \cite{Del} (see
also \cite{DL}) in the framework of association schemes.
The results obtained in \cite{Del1,Del2,Sch1,Sch2} cannot be made to
yield \eqref{eq:(n-1)(n-2)/2} directly however, and we have needed to work on the problem from
scratch, also applying the Delsarte methodology.

The paper is organized as follows. 
Sections~\ref{sec:critical} to \ref{sec:sidon} first develop a theory that
is valid over any base field.
Section~\ref{sec:critical} sets up
matters by reducing the existence of bases in geometric progression to the existence of
critical spaces of dimension $2$. 
Section~\ref{sec:con} develops an extension field
analogue of Hamidoune's isoperimetric method: in particular the
intersection theorem for atoms (Theorem~\ref{thm:intersection}) is
proved. Section~\ref{sec:2atom} derives further properties of atoms
and proves Theorem~\ref{thm:twin} which can be seen as a weak version of Theorem~\ref{thm:main}
in the sense that
it is valid only for extensions of degree $n$ such that $n-2$ is
prime. It places no restriction on the base field $F$ however, and
requires only $F$ to be algebraically closed in the extension field $L$.
Section~\ref{sec:sidon} introduces Sidon
spaces, makes the point that, under the hypothesis of
Theorem~\ref{thm:main}, possibly generalised to arbitrary base fields,
atoms must be Sidon spaces, and connects them
to packings in spaces of quadratic forms. Theorem~\ref{thm:main} is
reduced to a lower bound
(Theorem~\ref{thm:sidon}) 
on the dimension of the square of Sidon space. Section~\ref{sec:codes}
develops the study of packings of quadratic forms necessary to prove Theorem~\ref{thm:sidon}
and hence conclude a proof of 
Theorem~\ref{thm:main}. Section~\ref{sec:transcendental} is devoted to
extensions of Theorem~\ref{thm:main} to the case of
infinite-dimensional extensions. Finally Section~\ref{sec:comments}
concludes with some comments. 

\section{Deriving Theorem~\ref{thm:main} from the existence of
  critical spaces of dimension $2$}\label{sec:critical}

In this section we reduce the existence of bases in arithmetic
progression to the existence of critical spaces of dimension $2$.
In particular we reduce
Theorem~\ref{thm:main} to showing the
existence of a subspace $A$ of dimension~$2$ satisfying
\eqref{eq:SA}, but
statements will more generally hold for arbitrary base fields.

We first make the remark that if a space satisfying \eqref{eq:SA}
exists, then without loss of
generality we may suppose that it contains the field unit element $1$,
by multiplying $A$ by an appropriate element of $L$ if necessary.

\begin{lemma}\label{lem:prog}
Let $L$ be an extension of $F$ such that $F$ is algebraically closed
in $L$.
Suppose that $S$ is an $F$--subspace of dimension $s$ of $L$, and that
$A$ is a subspace of dimension $2$ generated by $\{1,a\}$ such that
$\dimn AS =\dimn S+1\le \dimn L-1$. Then there exists $g\in S$ such
that $\{g,ga,\ldots ,ga^{s-1}\}$ is a basis for $S$.
\end{lemma}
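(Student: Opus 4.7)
\begin{proof*}[Proof plan]
The strategy is to find a nonzero element in the iterated intersection
$$
I_k := S \cap aS \cap a^2 S \cap \cdots \cap a^{k-1}S
$$
at the threshold $k = s$. Any nonzero $g \in I_s$ satisfies $g \in a^i S$ for $i = 0, 1, \ldots, s-1$, so $a^{-i}g \in S$ for every such $i$; setting $g' := a^{-(s-1)}g$, one gets $g' a^j = a^{j-(s-1)} g \in S$ for $j = 0, 1, \ldots, s-1$. These $s$ elements will form the desired basis of $S$ as soon as $1, a, \ldots, a^{s-1}$ are linearly independent over $F$. This last point is handled by the hypotheses: since $\dim A = 2$ forces $a \notin F$, the extension $F(a)$ is a nontrivial intermediate extension, and by the no-proper-finite-intermediate-extension hypothesis either $F(a) = L$ (if $a$ is algebraic) or $F(a)$ is infinite dimensional (if $a$ is transcendental). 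In either case $\dim_F F(a) \ge \dim_F L \ge s + 2$, using the assumption $\dim(AS) \le \dim L - 1$, so the required independence follows.

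The core task is therefore the dimension bound $\dim I_k \ge s - k + 1$ for $k = 1, \ldots, s$, which I plan to prove by induction. The base case $k=1$ is trivial, and the Grassmann formula applied to the hypothesis $\dim(S + aS) = s + 1$ gives $\dim I_2 = s - 1$. For the inductive step, the obvious inclusion $I_k \subset a^{k-1}S$ yields
$$
I_k + a^k S \;\subset\; a^{k-1} S + a^k S \;=\; a^{k-1}(S + aS),
$$
a space of dimension exactly $s+1$. The Grassmann formula then delivers
$$
\dim I_{k+1} \;=\; \dim(I_k \cap a^k S) \;\ge\; \dim I_k + s - (s+1) \;=\; \dim I_k - 1,
$$
completing the induction. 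Applied at $k = s$ this gives $\dim I_s \ge 1$, and the argument of the first paragraph finishes the proof.

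I do not anticipate a real obstacle. The argument is essentially a Cauchy--Davenport style iteration transposed to the intersection side, entirely driven by the hypothesis $\dim(S + aS) = s + 1$. The only subtle point is the linear independence step at the very end, which is taken care of by combining the structural hypothesis on $L/F$ with the bound $\dim(AS) \le \dim L - 1$.
\end{proof*}
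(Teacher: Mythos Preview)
Your proof is correct and takes a somewhat different route from the paper's. The paper argues by induction on $s=\dim S$: it sets $S':=S\cap aS$, checks that $\dim(AS')=\dim(S')+1$ by invoking the linear Cauchy--Davenport inequality (Theorem~\ref{thm:Cauchy}) for the lower bound, applies the induction hypothesis to get a geometric-progression basis of $S'$, and then extends by one step back to $S$. You instead bound directly the iterated intersection $I_k=S\cap aS\cap\cdots\cap a^{k-1}S$ via Grassmann and the single identity $\dim(S+aS)=s+1$, never calling on Cauchy--Davenport. The two arguments are closely related---your $I_k$ is precisely the space the paper's induction reaches after $k-1$ steps---but yours is more self-contained, replacing the external black box by an explicit dimension count. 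The endgame (extracting $g$ and arguing linear independence of $1,a,\dots,a^{s-1}$ from the no-intermediate-extension hypothesis) is essentially the same in both proofs.
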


\begin{proof} 
We have $\dimn AS=\dim (S+aS)=2\dimn S-\dim(S\cap aS)$
  so
\begin{equation}\label{eq:k-1}
\dim(S\cap aS)=s-1.
\end{equation}
We next show the result by induction on $s\geq 2$.

If $\dimn S=2$, let $g'$ generate $S\cap aS$, i.e. $S\cap
aS=Fg'$. Then, $g'=ag$ with $g\in S$. Moreover $g\notin aS$ otherwise
$g=\lambda g'=\lambda ag$ for some $\lambda\in F^*$ which would mean $a\in F$, so $\{g,ag\}$ is a basis of
$S$.

In the general case $\dimn S=s$, let $S'=S\cap aS$. Then, from
\eqref{eq:k-1}, $\dimn S'=s-1$; moreover $S'$
also satisfies $\dimn AS'=\dimn S'+1$. Indeed, we have
\begin{equation*}
\dimn S' +1\leq \dimn AS'\leq \dimn aS=\dimn S' +1
\end{equation*}
where Theorem \ref{thm:Cauchy}  gives the first inequality while the second follows from
$AS'=S'+aS'\subset aS$. By induction, $S'$ has
a basis of the form $\{g',g'a,\dots, g'a^{s-2}\}$. Since $g'\in aS$,
for some $g\in S$, $g'=ga$. Moreover $g\notin S'$ otherwise 
$g=\sum_{i=1}^{s-1} \lambda_i ga^i$ for some $\lambda_i\in F$ but this
would mean that $\deg_F(a)\leq s-1<\dimn L$.
So, $\{g,ga,\dots,ga^{s-1}\}$ is a basis of $S$.
 \end{proof}

\begin{lemma}\label{lem:T}
Let $L/F$ be an extension with $F$ algebraically closed in $L$.
  Let $S, T$ be subspaces  of $L$ with $\dimn S, \dimn T\geq 2$. 
Suppose that there exists a basis of $S$ of the form $\{ g,
  ga, \ldots , ga^{s-1}\}$ for $a,g\in L$ and that we have
$$
\dimn ST =\dimn S +\dimn T -1\leq\dimn L -1.
$$
Then there is a basis of $T$ of the form $\{ g', g'a,\ldots
,g'a^{t-1}\}$
for some $g'\in L$.
\end{lemma}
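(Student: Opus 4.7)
The plan is to reduce the lemma to Lemma~\ref{lem:prog} by establishing $\dim(AT) = \dim(T) + 1$, where $A = \sub{1, a}$. Once this is done, Lemma~\ref{lem:prog} applied to $A$ and $T$ (in place of $A$ and $S$ there) immediately produces a basis of $T$ of the required form $\{g', g'a, \ldots, g'a^{t-1}\}$, since $\dim(AT) = t+1 \leq t+s-1 \leq \dim(L)-1$.

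To prove $\dim(AT) = t+1$, for $0 \leq i \leq s-1$ introduce $A_i = F + Fa + \cdots + Fa^i$ and $d_i = \dim(A_i T)$. The hypothesis on $S$ gives $S = gA_{s-1}$, hence $d_{s-1} = \dim(A_{s-1}T) = \dim(ST) = s+t-1$, while clearly $d_0 = \dim(T) = t$. The key observation is the identity $A_{i+1} = A_1 \cdot A_i$, which lets us write $A_{i+1}T = A_1 \cdot (A_i T)$. Applying the linear Cauchy--Davenport inequality (Theorem~\ref{thm:Cauchy}) to the pair $A_1$, $A_i T$---legitimate because $\dim(A_{i+1}T) \leq d_{s-1} \leq \dim(L)-1 < \dim(L)$---yields
$$d_{i+1} \geq \dim(A_1) + \dim(A_i T) - 1 = d_i + 1$$
for each $0 \leq i \leq s-2$.

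Telescoping these inequalities gives $s-1 = d_{s-1} - d_0 \geq s-1$, so every inequality along the chain must be an equality. In particular $d_1 - d_0 = 1$, which is exactly $\dim(AT) = t+1$, as desired. There is no significant obstacle in the argument: it is a clean iterative use of Cauchy--Davenport followed by Lemma~\ref{lem:prog}. The only care required is the dimension check that justifies each use of Cauchy--Davenport, which is automatic from the hypothesis $\dim(ST) \leq \dim(L) - 1$ and the inclusion $A_{i+1}T \subseteq A_{s-1}T$.
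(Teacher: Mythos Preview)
Your proof is correct and takes a genuinely different route from the paper's. The paper argues by induction on $s=\dim(S)$: setting $S'=\sub{a,\ldots,a^{s-1}}$, it uses the identity $ST=T+S'T$ together with Cauchy--Davenport to conclude that $\dim(S'T)$ equals either $\dim(S')+\dim(T)-1$ or $\dim(S')+\dim(T)$, and then must explicitly rule out the latter case (which would give $ST=S'T$) via a stabilizer-type argument using that $F(a)$ is either infinite-dimensional or all of $L$. Only after this case elimination can the induction hypothesis be invoked.

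Your telescoping argument sidesteps this entirely: by bounding each increment $d_{i+1}-d_i\geq 1$ from below with Cauchy--Davenport and matching the total increment $d_{s-1}-d_0=s-1$, you force every step to be an equality simultaneously, with no case analysis needed. This is cleaner and more direct; the only price is that one must briefly note $a\notin F$ (so that $\dim A_1=2$), which follows immediately from $\dim(S)\geq 2$. The paper's approach, on the other hand, makes the obstruction $ST=S'T$ visible and connects it to the absence of intermediate subfields, which is thematically in line with the rest of the paper but not logically necessary here.
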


\begin{proof} Without loss of generality we will assume that $S$ has a
  basis of the form $\{ 1,  a, \ldots , a^{s-1}\}$ and we will proceed
  by induction on $s=\dimn S\geq 2$. The case $s=2$ is treated in
  Lemma \ref{lem:prog}.

Let $S'$ be the subspace generated by $a,\ldots, a^{s-1}$: because $ST=T+S'T$, we have 
\begin{equation*}
\dimn T+\dimn S'T-\dim(T\cap S'T)=\dimn ST =\dimn S+\dimn T-1
\end{equation*}
thus
\begin{equation}\label{eq:prime}
\dimn S'T=\dimn S'+\dim(T\cap S'T).
\end{equation}
Since
$$\dimn S'+\dimn T-1\leq \dimn S'T\leq\dimn ST=\dimn S+\dimn T-1,$$
the leftmost inequality coming from Theorem~\ref{thm:Cauchy},
we have that \eqref{eq:prime}
leaves two possibilities: either $\dim(T\cap S'T)=\dimn T-1$ or
$\dim(T\cap S'T)=\dimn T$. Let us rule out the latter: indeed, it
would mean that $ST=S'T$, therefore, setting
$W=T+aT+\cdots +a^{s-2}T$ we get
$W\subset ST=S'T=aW$. But $W$ and $aW$ have the same dimension, 
so we would have $W=aW$, which is impossible, given that
$0<\dimn W<\dimn L$ and $F(a)$ is either infinite-dimensional or equal
to $L$.

So $\dimn S'T=\dimn S'+\dimn T-1$  and $\dimn S'=s-1$. Therefore, by induction 
$T$ has a basis of the required form.
\end{proof}

\section{Connectivity in Field Extensions}\label{sec:con}

We now transpose to the context of field extensions the
basic notions of the isoperimetric method as introduced in \cite{HALG}
and developed in a number of later papers, see \cite{YOH13}.
We borrow the terminology of \cite{HACTA,y2008} and other papers with some
adaptation to the linear case.

Recall that in any field extension $L/F$ any
non-zero $F$-linear form $\lambda$ defines the non-degenerate
symmetric bilinear form
  $$(x,y) \mapsto \scalprod{x}{y} = \lambda(xy).$$
Crucial to the
developments below will be the property:
\begin{equation}
  \label{eq:frobenius}
  \scalprod{xy}{z}=\scalprod{x}{yz} \; \mbox{ for all } x,y,z\in L.
\end{equation}

We make the remark that the statements of this section rely mostly on
\eqref{eq:frobenius} and as such could be stated in the general
context of algebras over $F$ in which a non-degenerate symmetric
bilinear form satisfying \eqref{eq:frobenius} exists, namely symmetric
Frobenius algebras. Since our applications will only concern extension
fields, we do not pursue this generalisation.

In the rest of this section, we consider an arbitrary
field extension $L/F$ together with a non-zero finite-dimensional
$F$-subspace $S$ of $L$. When $L$ is itself of finite dimension, we
shall consider it endowed with a fixed non-degenerate bilinear form
$\scalprod{}{}$ satisfying \eqref{eq:frobenius}.
In this case, for a subspace $X\subset L$ we denote by
$$X^\perp = \{y\in L:\; \forall x\in X, \scalprod{x}{y}=0\}.$$

For every subspace $X$ of $L$ with non-zero, finite dimension we denote by
$$
\partial_S X=\dimn XS-\dimn X,
$$
the increment of dimension of $X$ when multiplied by $S$. We have the submodularity relation:

\begin{proposition}\label{prop:subm} Let $X, Y$ be finite-dimensional subspaces of $L$. We have
$$
\partial_S (X+Y)+\partial_S (X\cap Y)\le \partial_S X+\partial_S Y.
$$
\end{proposition}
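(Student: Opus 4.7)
The plan is to reduce this submodularity statement to two elementary facts: the standard Grassmann dimension formula for vector subspaces, and a basic inclusion between products of subspaces. Writing out $\partial_S$ explicitly, the desired inequality becomes
\begin{equation*}
\dim((X+Y)S) + \dim((X\cap Y)S) - \dim(X+Y) - \dim(X\cap Y) \le \dim(XS) + \dim(YS) - \dim(X) - \dim(Y).
\end{equation*}
Since $\dim(X+Y)+\dim(X\cap Y)=\dim(X)+\dim(Y)$, those four terms cancel, and it is enough to establish
\begin{equation*}
\dim((X+Y)S) + \dim((X\cap Y)S) \le \dim(XS) + \dim(YS).
\end{equation*}

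Next I would observe the two structural facts about products of subspaces: first, $(X+Y)S=XS+YS$, which is immediate from the definition of the $F$-linear span; second, the inclusion $(X\cap Y)S\subseteq XS\cap YS$, which holds because any element of $(X\cap Y)S$ is an $F$-linear combination of products $zs$ with $z\in X\cap Y\subseteq X$ (hence the element lies in $XS$) and $z\in Y$ (hence it lies in $YS$). Using these, the left-hand side is bounded above by
\begin{equation*}
\dim(XS+YS)+\dim(XS\cap YS),
\end{equation*}
and applying the Grassmann formula now to the pair $XS$, $YS$ gives exactly $\dim(XS)+\dim(YS)$, which is the required bound.

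There is no real obstacle here: the argument only uses the lattice structure of subspaces under sum and intersection, together with the distributivity $(X+Y)S=XS+YS$ and the inclusion $(X\cap Y)S\subseteq XS\cap YS$. Neither the extension property $L/F$ nor the Frobenius identity \eqref{eq:frobenius} is needed for this proposition; the statement holds for the operator $X\mapsto XS$ on any modular lattice where this distributivity and inclusion are available. The more delicate consequences of \eqref{eq:frobenius} will only come into play in the subsequent intersection results for atoms.
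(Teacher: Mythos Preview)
Your proof is correct and follows essentially the same approach as the paper: both arguments rest on the two inclusions $(X+Y)S\subseteq XS+YS$ and $(X\cap Y)S\subseteq XS\cap YS$ together with the Grassmann dimension formula. The only cosmetic difference is that you first cancel the $\dim(X+Y)+\dim(X\cap Y)$ terms and then bound the product terms, whereas the paper carries out both steps in a single chain of inequalities.
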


\begin{proof}
We have  $(X+Y)S\subset XS+YS$ and  $(X\cap Y)S\subset XS\cap YS$. Therefore,
\begin{eqnarray*}
\partial_S (X+Y)+\partial_S (X\cap Y)&=&\dimn (X+Y)S-\dim (X+Y)+\dimn (X\cap Y)S\\
&&-\dim (X\cap Y)\\
&\le &\dimn XS+\dimn YS-\dim (XS\cap YS)-\dim (X+Y)\\
&&+\dim (XS \cap YS)-\dim (X\cap Y)\\
&\le &\dimn XS+\dimn YS-\dimn X-\dimn Y\\
&=&\partial_S X+\partial_S Y .
\end{eqnarray*}
\end{proof}

Let $\cX_k$ be the set of subspaces $X$ of $L$ such that
$$k\le \dimn X <\infty \text{ and } \dimn XS+k\le \dimn L.$$
If the set $\cX_k$ is non-empty, we define the {\em $k$--th
  connectivity} of $S$ by
$$
\kappa_k (S)=\min_{X\in\cX_k} \partial_S X.
$$
If the set $\cX_k$ is empty we set $\kappa_k (S) =-\infty$.
When $\kappa_k (S)\neq -\infty$, we define
a {\em $k$--fragment of~$S$} to be a subspace $M$ of $\cX_k$ with $\partial_S M=\kappa_k
(S)$. A $k$--fragment with minimum dimension is called a {\em $k$--atom}. 

The following Lemma is crucial to the development
of the isoperimetric method.

\begin{lemma}\label{lem:F*}
 Let the extension $L/F$ be finite-dimensional. If $X$ is a
 $k$-fragment of $S$, then $X^*=(XS)^\perp$ is also a $k$-fragment of $S$.
\end{lemma}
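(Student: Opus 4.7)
The plan is to use the Frobenius property \eqref{eq:frobenius} to exhibit a duality-like relationship between $X$ and $X^* = (XS)^\perp$ that exchanges the roles of $X$ and $XS$ (up to the trace-orthogonal complement), and then verify the two defining properties of a $k$-fragment.

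Let $n = \dim_F(L)$. First, I would establish the key identity $X^*S \subseteq X^\perp$. This follows directly from \eqref{eq:frobenius}: for $y \in X^* = (XS)^\perp$, $s \in S$ and $x \in X$, we have
\[
\scalprod{ys}{x} = \scalprod{y}{xs} = 0,
\]
since $xs \in XS$. Taking orthogonal complements (all subspaces in sight are finite dimensional, so $W^{\perp\perp}=W$) this is equivalent to $X \subseteq (X^*S)^\perp$. In particular,
\[
\dim(X^*S) \le \dim(X^\perp) = n - \dim(X).
\]

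Next I would verify that $X^* \in \cX_k$. From $\dim(X^*) = n - \dim(XS)$ and the hypothesis $\dim(XS) + k \le n$ we get $\dim(X^*) \ge k$. For the second condition, the inequality above combined with $\dim(X) \ge k$ gives
\[
\dim(X^*S) + k \le n - \dim(X) + k \le n,
\]
so indeed $X^* \in \cX_k$.

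Finally I would compare the boundaries. Using $\dim(X^*) = n - \dim(XS)$ and the bound on $\dim(X^*S)$,
\[
\partial_S X^* = \dim(X^*S) - \dim(X^*) \le (n - \dim(X)) - (n - \dim(XS)) = \partial_S X = \kappa_k(S).
\]
Since $\kappa_k(S)$ is the minimum of $\partial_S$ over $\cX_k$ and $X^* \in \cX_k$, the inequality must be an equality, so $X^*$ is a $k$-fragment of $S$. I do not foresee any real obstacle here; the whole argument is essentially a bookkeeping exercise once one recognises that the Frobenius identity \eqref{eq:frobenius} turns the orthogonal complement into an involution that swaps $X$ with $XS$ (up to taking perps), so that $\partial_S$ is preserved.
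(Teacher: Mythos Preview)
Your proof is correct and follows essentially the same route as the paper: both use the Frobenius identity \eqref{eq:frobenius} to obtain $X^*S\subseteq X^\perp$ (equivalently $X\subseteq (X^*S)^\perp$), deduce $\dim(X^*S)\le n-\dim(X)$, and combine this with $\dim(X^*)=n-\dim(XS)$ to bound $\partial_S X^*\le\partial_S X=\kappa_k(S)$, concluding by minimality. Your write-up is actually slightly more explicit than the paper's in checking both conditions for $X^*\in\cX_k$ and in spelling out why the inequality on $\partial_S$ must be an equality.
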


\begin{proof}
 Since  $$0=\scalprod{xs}{x^*}   = \scalprod{x}{x^*s},$$
  for every $x\in X, s\in S$ and $x^*\in X^*$, we have
  $(X^*S)^\perp \supseteq X$, so that
  \begin{equation}
    \label{eq:n-k}\dimn X^*S \leq\dimn L - \dimn X.
  \end{equation}
It follows that
  \begin{align}
    \partial_S X^*& = \dimn X^*S -\dimn X^* \nonumber\\
    &\leq \dimn L - \dimn X
    -\dimn X^*\nonumber\\
    &=\dimn L - \dimn X - (\dimn L - \dimn XS)\nonumber\\
    &=\partial_S X.
 \label{eq:fragment}
  \end{align}
  Finally, since $X$ satisfies $\dimn XS\leq \dimn L-k$ we have
  that $\dimn X^* \geq k$. Together with \eqref{eq:n-k} and
  \eqref{eq:fragment} this implies that $X^*$ is a $k$-fragment of
  $S$.
\end{proof}

\begin{corollary}\label{cor:A<A*}
Let the extension $L/F$ be finite-dimensional. If $A$ is a $k$-atom of
  $S$ then
  \begin{equation}\label{eq:2a+k}
  \dimn L \geq 2\dimn A + \kappa_k(S).
  \end{equation}
\end{corollary}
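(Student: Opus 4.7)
The plan is to apply Lemma~\ref{lem:F*} directly to the $k$-atom $A$. Since $A$ is in particular a $k$-fragment, the lemma produces another $k$-fragment, namely $A^* = (AS)^\perp$. By definition of a $k$-atom, $A$ has minimum dimension among all $k$-fragments of $S$, so we immediately obtain the comparison
\[
\dim(A) \leq \dim(A^*).
\]

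Next I would compute $\dim(A^*)$ explicitly. Because the trace form $\scalprod{\cdot}{\cdot}$ is non-degenerate on the finite separable extension $L/F$, the orthogonal complement of any subspace $Y\subset L$ satisfies $\dim(Y) + \dim(Y^\perp) = \dim(L)$. Taking $Y = AS$ gives $\dim(A^*) = \dim(L) - \dim(AS)$. On the other hand, since $A$ is a fragment, $\partial_S(A) = \kappa_k(S)$, so $\dim(AS) = \dim(A) + \kappa_k(S)$.

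Substituting back into $\dim(A) \leq \dim(A^*)$ yields
\[
\dim(A) \leq \dim(L) - \dim(A) - \kappa_k(S),
\]
which rearranges to the claimed inequality $\dim(L) \geq 2\dim(A) + \kappa_k(S)$.

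There is essentially no obstacle here: the content has already been packaged into Lemma~\ref{lem:F*}, and the corollary is just the minimality of atoms applied to the ``dual'' fragment $A^*$, combined with the dimension formula for orthogonal complements. The only point worth double-checking is that $A^*$ genuinely lies in $\cX_k$ (so that comparing it to the atom is legitimate), but this is part of the conclusion of Lemma~\ref{lem:F*} itself.
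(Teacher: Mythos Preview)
Your proof is correct and follows essentially the same approach as the paper: apply Lemma~\ref{lem:F*} to obtain the fragment $A^*=(AS)^\perp$, use the minimality of the atom to get $\dim(A)\leq\dim(A^*)$, and combine with $\dim(A^*)=\dim(L)-\dim(AS)=\dim(L)-\dim(A)-\kappa_k(S)$.
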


\begin{proof}
  Let $A^*=(AS)^\perp$. Since
  $\dimn AS=\dimn A+\partial_SA=\dimn A+\kappa_k(S)$ by definition
  of $\partial_SA$ and $\kappa_k(S)$, we have that
  $\dimn L = \dimn AS + \dimn A^* = \dimn A+\kappa_k(S)
  +\dimn A^*$. Furthermore, by definition of an atom we have
  $\dimn A\leq\dimn A^*$ since $A^*$ is a fragment by
  Lemma~\ref{lem:F*}. Hence the result.
\end{proof}

The cornerstone of the isoperimetric method is the following property.

\begin{theorem}[Intersection Theorem]\label{thm:intersection} 
Let $A, B$ be two distinct
  $k$--atoms of $S$. Then,  $$\dim (A\cap B)\le k-1.$$
\end{theorem}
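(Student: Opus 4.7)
The plan is a proof by contradiction: assume $\dim(A \cap B) \geq k$. First I would verify that $A \cap B \in \cX_k$: the inclusion $(A \cap B)S \subseteq AS$ forces $\dim((A \cap B)S) \leq \dim(AS) \leq \dim(L)-k$, and $\dim(A \cap B) \geq k$ by hypothesis, so $\partial_S(A \cap B) \geq \kappa_k(S)$. The entire plan is then to establish the reverse inequality $\partial_S(A \cap B) \leq \kappa_k(S)$: once $A \cap B$ is a $k$-fragment, the fact that $A \neq B$ (with the same atom dimension) forces $A \cap B \subsetneq A$, hence $\dim(A \cap B) < \dim(A)$, contradicting the minimality of $A$ among $k$-fragments.

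To obtain that upper bound I would apply the submodularity inequality of Proposition~\ref{prop:subm},
\[
\partial_S(A+B) + \partial_S(A \cap B) \leq \partial_S(A) + \partial_S(B) = 2 \kappa_k(S),
\]
reducing the task to showing $\partial_S(A+B) \geq \kappa_k(S)$. This is automatic when $A + B \in \cX_k$ and closes the argument in that case. The sole remaining difficulty is therefore the case $\dim((A+B)S) > \dim(L) - k$.

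In this latter case I would abandon submodularity and argue numerically. From $(A+B)S = AS + BS$ together with $\dim(AS)=\dim(BS)=\dim(A)+\kappa_k(S)$,
\[
\dim(AS \cap BS) = 2\dim(A) + 2\kappa_k(S) - \dim((A+B)S) < 2\dim(A) + 2\kappa_k(S) - \dim(L) + k.
\]
Invoking Corollary~\ref{cor:A<A*}, which gives $\dim(L) \geq 2\dim(A) + \kappa_k(S)$, simplifies this to $\dim(AS \cap BS) \leq \kappa_k(S) + k - 1$. Because $(A \cap B)S \subseteq AS \cap BS$ and $\dim(A \cap B) \geq k$, this produces $\partial_S(A \cap B) \leq \kappa_k(S) - 1$, contradicting $\partial_S(A \cap B) \geq \kappa_k(S)$.

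The delicate step is the second case: submodularity alone is insufficient when $(A+B)S$ is nearly all of $L$, and the atom-versus-dual dimension bound from Corollary~\ref{cor:A<A*} (which rests on the duality of Lemma~\ref{lem:F*}, and hence on the separability/non-degeneracy of the trace inner product) is exactly what turns the geometric obstruction into a numerical contradiction.
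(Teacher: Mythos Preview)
Your argument is correct and rests on the same two ingredients as the paper's proof: the submodularity inequality (Proposition~\ref{prop:subm}) and the atom bound of Corollary~\ref{cor:A<A*}. The paper organizes them a bit more economically: rather than splitting into cases on whether $A+B\in\cX_k$, it first uses the atom minimality to get the \emph{strict} inequality $\partial_S(A\cap B)>\kappa_k(S)$, hence $\partial_S(A+B)<\kappa_k(S)$ by submodularity, and then shows directly that $\dim((A+B)S)<2\dim A-k+\kappa_k(S)\leq \dim L-k$ (the last step via Corollary~\ref{cor:A<A*} in the finite case), so that $A+B\in\cX_k$ always holds and your Case~2 never actually occurs. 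Your Case~2 treatment, bounding $\dim(AS\cap BS)$ via $(A+B)S=AS+BS$, is a valid alternative but is in effect unpacking the submodularity inequality together with the same numerical bound; it yields the slightly stronger $\partial_S(A\cap B)\leq\kappa_k(S)-1$ at the cost of an extra case.
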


\begin{proof}  By the submodularity relation one has
$$
\partial_S (A+B)+\partial_S (A\cap B )\le \partial_S A+\partial_S B=2\kappa_k(S).
$$
Suppose that  $\dim (A\cap B)\ge k$. By the definition of a $k$--atom,  we have $\partial_S (A\cap B)>\kappa_k (S)$. It follows that $\partial_S (A+B)<\kappa_k (S)$. Hence,
\begin{eqnarray*}
\dim (A+B)S&=&\dim (A+B)+\partial_S (A+B)\\
&<&2\dimn A-\dim (A\cap B)+\kappa_k (S)\\
&\le &2\dimn A-k+\kappa_k (S),
\end{eqnarray*}
from which we get
$$\dim (A+B)S +k \leq \dimn L$$
trivially if $L$ is infinite-dimensional and by 
Corollary~\ref{cor:A<A*} if $L$ is finite-dimensional.
By the definition of $\kappa_k (S)$, this contradicts $\partial_S (A+B)<\kappa_k (S)$.
\end{proof}

Let $A$ be a $k$--atom of $S$. We observe that, from the definitions of
$\partial_S$ and of atoms, for each $\alpha\in
L\setminus\{0\}$, $\alpha A$ is also a $k$--atom of
$S$. Therefore there is a $k$--atom of $S$ containing $1$. By the
Intersection Theorem, when $k=1$ the atom containing $1$ is
unique. The following theorem is the linear analogue of a theorem of
Mann \cite[Ch. 1]{mann}. It is not as powerful as Kneser's theorem but
it is already enough to recover the linear Cauchy-Davenport Theorem (Theorem~\ref{thm:Cauchy}).

\begin{theorem}\label{thm:atom} Let $A$ be the $1$--atom of $S$
  containing $1$. 
Then  $A$  is a subfield of $L$. Moreover, if
$$
\dimn ST <\dimn S +\dimn T -1<\dim L,
$$
for some subspace $T$, then $A$ is a  subfield of $L$ properly containing $F$.
\end{theorem}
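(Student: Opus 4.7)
The plan is to leverage the Intersection Theorem (Theorem~\ref{thm:intersection}) in the special case $k=1$, where distinct $1$-atoms of $S$ are forced to intersect trivially, together with the observation made just above the statement that $\alpha A$ is again a $1$-atom of $S$ for every $\alpha\in L\setminus\{0\}$.

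For the first assertion, I would fix an arbitrary nonzero $a\in A$ and look at the $1$-atom $a^{-1}A$. Since $a\in A$, both $A$ and $a^{-1}A$ contain $1$, so the Intersection Theorem rules out $a^{-1}A\ne A$ and forces $aA=A$. Letting $a$ range over $A\setminus\{0\}$ yields simultaneously closure of $A$ under multiplication (for $b\in A$, $ab\in aA=A$) and closure under inversion ($a^{-1}=a^{-1}\cdot 1\in a^{-1}A=A$). Combined with the $F$-subspace structure and the presence of $1$, this shows $A$ is a subfield of $L$, necessarily containing $F$.

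For the second assertion, assume $A=F$ for contradiction. Then $\kappa_1(S)=\partial_S F=\dim(FS)-\dim F=\dim S-1$. The hypothesis provides a subspace $T$ with $\dim(ST)+1\le\dim L$, so $T\in\cX_1$, and with $\partial_S T=\dim(ST)-\dim T<\dim S-1$. By definition $\kappa_1(S)\le\partial_S T$, yielding $\kappa_1(S)<\dim S-1$, which contradicts the previous equality. Hence $A\supsetneq F$.

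Neither step presents a genuine obstacle: once the Intersection Theorem and the dilation invariance of $1$-atoms are in hand, both parts of the statement drop out almost immediately.
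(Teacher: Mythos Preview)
Your proof is correct and follows essentially the same approach as the paper: both use the Intersection Theorem for $k=1$ together with the fact that nonzero scalar multiples of a $1$-atom are $1$-atoms to force $aA=A$ for all nonzero $a\in A$, and both derive the second assertion by observing that $T\in\cX_1$ with $\partial_S T<\dim S-1$ is incompatible with a $1$-atom of dimension~$1$. The only cosmetic difference is that you pass through $a^{-1}A$ while the paper uses $aA$ directly.
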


\begin{proof}  For each $a\in A\setminus\{0\}$ we have $a \in aA\cap
  A$. 
Hence, by the Intersection Theorem $aA=A$. Therefore $a^{-1}\in A$ and
$A$ is a subfield of $L$. Moreover, if $\dimn ST<\dimn S +\dimn T-1$ for some subspace $T$ then
$$
0\le \partial_S(A)=\dimn AS -\dimn A \le \partial_S T =\dimn ST -\dimn
T <\dimn S -1,
$$
so that we cannot have $\dimn A =1$.
\end{proof}

One consequence of Theorem \ref{thm:atom} is the linear
Cauchy--Davenport inequality
of Theorem~\ref{thm:Cauchy}
\begin{equation}\label{eq:cd}
\dimn ST \ge \dimn S +\dimn T -1,
\end{equation}
when $L$ has no proper finite-dimensional subfields containing~$F$.

 \section{The $2$--atom of a  Vosper space}\label{sec:2atom}

We now investigate the properties of 2-atoms of a subspace $S$, with
in particular the goal of showing
that under the conditions of Theorem~\ref{thm:main} they must be of
dimension 2. Note that under the conditions of Theorem~\ref{thm:main},
$2$-atoms of $S$ must exist.

Let $L/F$ be an extension such that $F$ is algebraically closed in $L$.
If $F$ is a finite field and
if $L$ is of finite dimension over $F$, this means that $L/K$ has prime
degree, whence the hypothesis of Theorem~\ref{thm:main}, but the
results of this section hold in the more general case.

Let $S$ be an $F$-vector space of finite dimension in $L$ such that
$$
\kappa_2 (S)=\dimn S-1.
$$
Let $A$ be a $2$--atom of $S$. We shall be interested in the sequence
of subspaces $A^i$, $i\geq 1$, where $A^i$ is defined inductively by $A^i=A^{i-1}A$.
In this section we shall show that $A$ is also a  $2$--atom of $A$, that
$A^i$, $i\ge 1$ is a  $2$--fragment of $A$ as long as
$\dimn A^iS +2\leq \dimn L$, and that in the case when $\dimn L$ is finite
$\dimn L\equiv 2 \pmod{n-1}$, where $n=\dimn A$.

\begin{lemma}\label{lem:2atom} If $A$ is a $2$--atom of $S$ then $A$ is a $2$--atom of $A$.
\end{lemma}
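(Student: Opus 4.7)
The plan has three main steps.

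First, observe that $\partial_A(S) = \dim(AS) - \dim(S) = \dim(A) - 1$ from $A$ being a $2$-atom of $S$, and that $S \in \cX_2(A)$ (both $\dim(S) \ge 2$ and $\dim(SA)+2 \le \dim(L)$ are inherited from the $2$-atom hypothesis on $A$). Thus $\kappa_2(A) \le \dim(A)-1$, and the matching lower bound from Cauchy--Davenport (Theorem~\ref{thm:Cauchy}) gives $\kappa_2(A) = \dim(A)-1$.

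Second, observe that every scaling $bA$ with $b\in L\setminus\{0\}$ is again a $2$-atom of $S$ (same dimension and same $\partial_S$-value), and that the stabiliser $\{b\in L\setminus\{0\} : bA = A\}$ is a subfield of $L$ containing $F$ by the argument of Theorem~\ref{thm:atom}, hence equal to $F$ under the hypothesis on the extension. Scaling so that $1 \in A$, the Intersection Theorem~\ref{thm:intersection} then yields $\dim(A \cap bA) \le 1$ for every $b \notin F$.

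Third, for a $2$-fragment $B$ of $A$ with $1 \in B$ (after scaling) and basis $\{1, b_2, \ldots, b_k\}$, set $P_j = A + b_2 A + \cdots + b_j A$ and $c_j = \dim(P_{j-1} \cap b_j A)$. The $2$-fragment condition $\dim(AB) = \dim(A) + k - 1$ together with the recursion $\dim P_j = \dim P_{j-1} + \dim(A) - c_j$ yields $\sum_{j=2}^k c_j = (k-1)(\dim(A) - 1)$. Combined with $c_2 \le 1$ (from the previous step) and the trivial $c_j \le \dim(A)$ for $j \ge 3$, this forces $k \ge \dim(A)$. Thus every $2$-fragment of $A$ has dimension at least $\dim(A)$, and in particular any $2$-atom of $A$ has dimension $\ge \dim(A)$. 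It remains to show that $A$ itself is a $2$-fragment of $A$, i.e.\ that $\dim(A^2) = 2\dim(A) - 1$ (the condition $\dim(A^2) + 2 \le \dim(L)$ being guaranteed by Corollary~\ref{cor:A<A*}); together with the minimality just proved this identifies $A$ as a $2$-atom of $A$.

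The main obstacle is this last equality $\dim(A^2) = 2\dim(A) - 1$. Cauchy--Davenport supplies the lower bound, but the pairwise Intersection Theorem bound on $\dim(A\cap bA)$ only recovers this same lower bound (via the dimension count above with $B=A$, whose right-hand side matches the upper bound on $\sum c_j$). The matching upper bound $\dim(A^2) \le 2\dim(A) - 1$ therefore requires more refined structural input. The natural route is to exploit the duality $AA^* = S^\perp$, where $A^* = (AS)^\perp$ is the dual $2$-fragment from Lemma~\ref{lem:F*}: the containment $AA^*\subseteq S^\perp$ follows directly from the Frobenius identity $\scalprod{xy}{z} = \scalprod{x}{yz}$, and equality holds by Cauchy--Davenport applied to $A\cdot A^*$. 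Combining this identity with a further Cauchy--Davenport or submodularity argument on $A^2 A^* = A\cdot S^\perp$ should produce the required upper bound on $\dim(A^2)$, completing the proof.
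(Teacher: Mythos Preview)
Your Steps 1--3 are correct. In particular, the recursive dimension count in Step~3 is a clean and valid way to show that any $2$-fragment $B$ of $A$ satisfies $\dim(B)\ge\dim(A)$; it even subsumes the paper's Claim (which uses only the single inequality $\dim(A+bA)\ge 2\dim A-1$).

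The genuine gap is the last step. Your proposed route via $A^*=(AS)^\perp$ does give the identity $AA^*=S^\perp$, but the suggestion that ``a further Cauchy--Davenport or submodularity argument on $A^2A^*=A\cdot S^\perp$'' will supply $\dim(A^2)\le 2\dim(A)-1$ does not work. Cauchy--Davenport only gives a \emph{lower} bound on $\dim(A^2A^*)$; to extract an upper bound on $\dim(A^2)$ you would need an independent upper bound on $\dim(A\,S^\perp)$, and none is available. Concretely, $(A\,S^\perp)^\perp=\{x:xA\subseteq S\}$, which is $\{0\}$ whenever $\dim(A)>\dim(S)$ --- a case not excluded by the hypotheses --- so then $A\,S^\perp=L$ and nothing can be concluded. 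Submodularity likewise only bounds $\partial$ from above on intersections and sums, which again translates into lower, not upper, bounds on the relevant product dimension.

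The missing idea is to stay with the $2$-atom $B$ of $A$ that you introduced in Step~3, rather than switching to $A^*$. Apply the Intersection Theorem to $B$ (now as a $2$-atom of $A$) to get $\dim(B\cap aB)\le 1$ for $a\in A\setminus F$; combined with your Step~3 this gives $\dim(B)=\dim(A)=:n$ and moreover $AB=B+aB$. Then
\[
A^2B = A(B+aB)=AB+aAB,
\]
and since $aB\subseteq AB\cap aAB$ one obtains $\dim(A^2B)\le 2\dim(AB)-\dim(B)=3n-2$. Corollary~\ref{cor:A<A*} (applied to $B$ as a $2$-atom of $A$) gives $\dim(L)\ge 3n-1$, so $A^2B\ne L$ and Cauchy--Davenport on $A^2\cdot B$ yields the missing inequality $\dim(A^2)\le 2n-1$. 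This is exactly the paper's argument; the point is that the two-generator structure $AB=B+aB$ --- available precisely because $B$ is an atom --- is what produces an \emph{upper} bound on $\dim(A^2B)$, a feature the product $A\cdot S^\perp$ lacks.
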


\begin{proof} The statement is clear if $\dimn A=2$. Suppose that
  $\dimn A =n\ge 3$. Note that $S$ is a witness that $\kappa_2
  (A)=\dimn A -1$. Let $B$ be a $2$--atom of $A$. Without loss of
  generality we assume $1\in A\cap B$.

Let $b\in B\setminus F$ and $a\in A\setminus F$.

\begin{claim} $\dimn A =\dimn B$ and $AB=A+bA=B+aB.$
\end{claim}
\begin{proof}
By the Intersection Theorem (Theorem~\ref{thm:intersection}),
$$
2\dimn A-1\le \dim (A+bA) \le \dimn AB=\dimn A+\dimn B-1,
$$
which implies $\dimn B\ge \dimn A$. Analogously,
$$
2\dimn B-1\le \dim (B+aB) \le \dimn AB=\dimn A +\dimn B-1,
$$
so that $\dimn A =\dimn B=n$.  Moreover
$$
AB=A+bA=B+aB.
$$
\end{proof}

\begin{claim}\label{claim:<dimL} $\dimn A^2B< \dimn L$.
\end{claim}
\begin{proof}
We have
$$
A^2B=A(AB)=A(aB+B)\le aAB+AB.
$$
Hence,
\begin{align}
\dimn A^2B &\le 2\dimn AB-\dim (aAB\cap AB)\nonumber\\
&\le 2(\dimn A+\dimn B-1)-\dimn B\nonumber\\
&=2\dimn A+\dimn B-2,\label{eq:a2b}
\end{align}
where in the second inequality we use that $aB\subset aAB\cap
AB$. On the other hand,
by Corollary~\ref{cor:A<A*},
$$
\dimn L\ge 2\dimn A+\kappa_2 (A)=3n-1.
$$
\end{proof}
Claim~\ref{claim:<dimL} allows us to apply the
linear Cauchy--Davenport inequality \eqref{eq:cd} to
the spaces $A^2$ and $B$, and together with \eqref{eq:a2b} we obtain
$$
\dimn A^2+\dimn B-1\le \dimn A^2B\le 2\dimn A+\dimn B-2,
$$
which implies
$$
\dimn A^2\le 2\dimn A-1.
$$
This shows that $A$ is its own $2$--atom.
\end{proof}

\begin{lemma}\label{lem:ak} Let $A$ be a $2$--atom of $S$ and $t\ge 2$ an integer. We have
$$
\dimn A^t=\min\{ \dimn A^{t-1}+\dimn A-1,\dimn L\}.
$$
\end{lemma}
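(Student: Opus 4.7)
I would argue by induction on $t \geq 2$. The base case $t = 2$ is supplied by Lemma \ref{lem:2atom} (which gives $\dim(A^2) = 2\dim(A) - 1$) combined with Corollary \ref{cor:A<A*} applied to $A$ as its own $2$-atom (which yields $\dim(L) \geq 3\dim(A) - 1$), so that the minimum in the stated formula equals $2\dim(A) - 1$ and coincides with $\dim(A^2)$.

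For the inductive step $t \mapsto t+1$, write $n = \dim(A)$ and rescale so that $1 \in A$. If $\dim(A^t) = \dim(L)$, then $A^t = L$ forces $A^{t+1} = L$ and the formula is trivial. Otherwise the induction hypothesis reads $\dim(A^t) = \dim(A^{t-1}) + n - 1$, which is exactly the statement that $A^{t-1}$ is a $2$-fragment of $A$, and the linear Cauchy--Davenport inequality (Theorem \ref{thm:Cauchy}) yields the lower bound $\dim(A^{t+1}) \geq \min\{\dim(L), \dim(A^t) + n - 1\}$. Only the reverse inequality $\dim(A^{t+1}) \leq \dim(A^t) + n - 1$, in the sub-case $\dim(A^t) + n - 1 \leq \dim(L)$, needs work: it amounts to showing that $A^t$ is itself a $2$-fragment of $A$.

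To propagate the fragment property, I would fix a basis $1 = a_1, \ldots, a_n$ of $A$, form the partial sums $Y_j = a_1 A^{t-1} + \cdots + a_j A^{t-1}$, and observe that $Y_n = A \cdot A^{t-1} = A^t$ while each scalar multiple $a_i A^{t-1}$ is again a $2$-fragment of $A$ (multiplication by $a_i \in L^*$ preserves both $\dim$ and $\partial_A$). Submodularity (Proposition \ref{prop:subm}) then gives
\[
\partial_A(Y_{j+1}) \leq \partial_A(Y_j) + (n-1) - \partial_A\bigl(Y_j \cap a_{j+1} A^{t-1}\bigr).
\]
If every intersection $Y_j \cap a_{j+1} A^{t-1}$ lies in $\cX_2$ for $S = A$, then $\partial_A$ on it is at least $\kappa_2(A) = n-1$, the chain telescopes to $\partial_A(A^t) = \partial_A(Y_n) \leq \partial_A(Y_1) = \partial_A(A^{t-1}) = n - 1$, and the required upper bound follows.

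The main obstacle is verifying the membership $Y_j \cap a_{j+1} A^{t-1} \in \cX_2$ at every stage. The dimension identity together with $Y_{j+1} \subseteq A^t$ gives $\dim(Y_j \cap a_{j+1} A^{t-1}) \geq \dim(Y_j) - n + 1$, so one needs $\dim(Y_j) \geq n + 1$, which reduces to $\dim(A^{t-1}) \geq n + 1$; this is automatic for $t \geq 3$ since $\dim(A^{t-1}) \geq \dim(A^2) = 2n - 1$, but the residual case $t = 3$ (where $A^{t-1} = A$ has dimension exactly $n$) requires a separate direct bootstrap along the lines of the proof of Lemma \ref{lem:2atom}, working with a $2$-atom $B$ of $A$ to bound a triple product $\dim(A^2 \cdot A \cdot B)$ and then invoking Cauchy--Davenport on the pair $(A^3, B)$. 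The secondary condition $\dim(A \cdot (Y_j \cap a_{j+1} A^{t-1})) + 2 \leq \dim(L)$ must likewise be extracted from the sub-case hypothesis together with Corollary \ref{cor:A<A*}, which is where the finite separability (or infinite-dimensional) assumption on $L/F$ enters.
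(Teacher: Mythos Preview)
Your route is workable but overcomplicated, and the obstacle you identify is largely self-imposed. The paper's proof is much shorter: since $\dim(A^2)=2n-1$ and the Intersection Theorem gives $\dim(A\cap aA)\le 1$ for any $a\in A\setminus F$, one has $A^2=A+aA$. Hence
\[
A^t=A^{t-2}A^2=A^{t-1}+aA^{t-1}
\]
is a sum of only \emph{two} terms, the intersection $A^{t-1}\cap aA^{t-1}$ visibly contains $aA^{t-2}$, and inclusion--exclusion together with the inductive hypothesis gives
\[
\dim(A^t)\le 2\dim(A^{t-1})-\dim(A^{t-2})=\dim(A^{t-1})+(n-1).
\]
The matching lower bound is Cauchy--Davenport when $A^t\ne L$. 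No fragment bookkeeping, no $\cX_2$ verification, no separate bootstrap.

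As for your $n$-term submodularity telescoping: you do not actually need $Y_j\cap a_{j+1}A^{t-1}\in\cX_2$; you only need $\partial_A$ of that intersection to be at least $n-1$, and this follows from Cauchy--Davenport (Theorem~\ref{thm:Cauchy}), not from $\kappa_2(A)$. With $a_1=1$, the intersection contains $a_{j+1}A^{t-2}\ne 0$, and its product by $A$ sits inside $a_{j+1}A^t$, which is a proper subspace in the sub-case you are treating; so Cauchy--Davenport applies and gives $\partial_A\ge n-1$ directly. Thus no separate bootstrap for the passage to $A^3$ is needed. (Your indexing is also off: in your notation $t\mapsto t+1$, the step where $A^{t-1}=A$ is $t=2$, not $t=3$.) Once this is straightened out, your argument essentially collapses to the paper's two-term version with extra notation.
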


\begin{proof} Let $a\in A\setminus F$. By Lemma  \ref{lem:2atom} $A$
  is a $2$--atom of $A$ and $\dimn A^2=\dim (A+aA)=2\dimn A-1$,
  which establishes the result for $t=2$. In particular, $A^2=A+aA$,
  so that we have
$$
A^t=A^{t-2}(A+ aA)=A^{t-1}+ aA^{t-1}.
$$
Now, notice that $A^{t-1}\cap aA^{t-1}$ contains $aA^{t-2}$, therefore
$$
\dimn A^{t}=\dim (A^{t-1}+aA^{t-1})\leq
\dimn A^{t-1}+\dimn A^{t-1} -\dimn A^{t-2}.
$$
Suppose that $A^t\neq L$.
By induction on $t$ we have that $\dimn A^{t-1} -\dimn A^{t-2}
=\dimn A-1$
therefore
$$
\dimn A^{t}\leq \dimn A^{t-1}+\dimn A-1.
$$
Now since  $A^t\neq L$, the linear Cauchy-Davenport inequality \eqref{eq:cd}
gives
$$
\dimn A^{t-1}+\dimn A-1\leq\dimn A^{t},
$$
which yields $\dimn A^{t}=\dimn A^{t-1}+\dimn A-1$.
\end{proof}

\begin{lemma}\label{lem:dim(A)even} Let $A$ be a $2$--atom of $S$ with
  $\dimn A=n>2$. Then
if $\dimn L$ is finite we have:
$$
\dimn L\equiv 2 \bmod{(n-1)}.
$$
\end{lemma}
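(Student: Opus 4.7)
The plan is as follows. First, by replacing $A$ with $\alpha A$ for a suitable $\alpha\in L\setminus\{0\}$ (which yields another $2$-atom of $S$, as observed just before Theorem~\ref{thm:atom}), we may assume $1\in A$, so the powers $A\subset A^2\subset A^3\subset\cdots$ form an ascending chain. The union $F(A)=\bigcup_t A^t$ is a finite-dimensional subring of $L$, hence a subfield of $L$ (an integral domain), and it properly contains $F$ since $\dim A=n>2$. The no-intermediate-subfield hypothesis therefore forces $F(A)=L$, and we let $t^*\ge 1$ be the smallest integer with $A^{t^*}=L$. By Lemma~\ref{lem:2atom}, $A$ is a $2$-atom of $A$, so $\kappa_2(A)=n-1$ and Corollary~\ref{cor:A<A*} gives $\dim(L)\ge 2n+(n-1)=3n-1$, which rules out $t^*\le 2$, so $t^*\ge 3$.

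The numerical backbone is provided by iterating Lemma~\ref{lem:ak}: as long as $A^t\neq L$ one has $\dim(A^t)=1+t(n-1)$. Combined with the minimality of $t^*$ (which gives $A^{t^*-1}\neq L$) and the relation $\dim(A^{t^*})\le\dim(A^{t^*-1})+n-1$ from Lemma~\ref{lem:ak}, this yields
$$
1+(t^*-1)(n-1)\;<\;\dim(L)\;\le\;1+t^*(n-1).
$$
The target congruence $\dim(L)\equiv 2\pmod{n-1}$ amounts to proving the stronger upper bound $\dim(L)\le(t^*-1)(n-1)+2$.

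The key step is to derive this stronger upper bound by scrutinising $A^{t^*-2}$, which is nonzero of dimension $1+(t^*-2)(n-1)\ge n\ge 3$ thanks to $t^*\ge 3$. Its $\partial_A$-value equals $\dim(A^{t^*-1})-\dim(A^{t^*-2})=n-1=\kappa_2(A)$, so $A^{t^*-2}$ will be a $2$-fragment of $A$ as soon as it lies in $\cX_2$ for $A$, i.e.\ as soon as $\dim(A^{t^*-1})+2\le\dim(L)$. If this inequality held, Lemma~\ref{lem:F*} would promote $(A\cdot A^{t^*-2})^\perp=(A^{t^*-1})^\perp$ to a $2$-fragment of $A$ whose dimension would be
$$
\dim(L)-\dim(A^{t^*-1})\;\le\;1+t^*(n-1)-1-(t^*-1)(n-1)\;=\;n-1\;<\;n,
$$
contradicting the fact that the atom $A$ has minimum dimension $n$ among $2$-fragments of $A$. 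Hence $\dim(A^{t^*-1})+2>\dim(L)$, i.e.\ $\dim(L)\le(t^*-1)(n-1)+2$, which together with the strict lower bound forces $\dim(L)=(t^*-1)(n-1)+2$.

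I do not foresee a genuine obstacle here: the whole argument is a bookkeeping exercise combining Lemma~\ref{lem:ak} (to read off the $\dim(A^t)$) with the duality of Lemma~\ref{lem:F*} and atom minimality. The only points to watch are that $t^*\ge 3$ (so $A^{t^*-2}$ makes sense and has dimension $\ge n$) and that the orthogonal $(A^{t^*-1})^\perp$ is indeed brought within the scope of Lemma~\ref{lem:F*}, both of which are handled cleanly above.
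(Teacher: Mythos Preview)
Your proof is correct and follows essentially the same route as the paper's: your $t^*$ is the paper's $t+1$ (where $t$ is the largest integer with $\dim(A^t)<\dim(L)$), and both arguments pivot on the same pair of spaces $A^{t^*-2}$ and $(A^{t^*-1})^\perp$, invoking Lemma~\ref{lem:F*} to force the latter to be a $2$-fragment of dimension $<n$ unless $\dim(L)-\dim(A^{t^*-1})\le 1$. The only cosmetic differences are that you justify $t^*\ge 3$ via Corollary~\ref{cor:A<A*} (the paper simply recalls $\dim(A^2)\le\dim(L)-2$, which is built into $A$ being a $2$-atom of $A$) and that you explicitly normalise $1\in A$, which the paper does implicitly through Lemma~\ref{lem:2atom}.
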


\begin{proof}
Let $t$ be the largest positive integer such that $\dimn A^t<\dimn L$. 
We recall that $\dimn A^2\le \dimn L-2$, so that $t\ge
2$. Let $X=A^{t-1}$ and
let $X^*=(A^{t})^{\perp}$. Since we have supposed $\dimn A^t<\dimn L$,
we have $\dimn X^*>0$. 
Lemma~\ref{lem:ak} implies that either $\dimn X^*=1$ or $X=A^{t-1}$ is a $2$-fragment of
$A$. But if $X$ is a $2$-fragment of
$A$ then Lemma~\ref{lem:F*} applied to $X$ and
$A$ implies that $X^*$ is also a $2$-fragment of $A$.
But then the maximality of $t$ such that $\dimn A^t<\dimn L$ and
Lemma~\ref{lem:ak}
imply that $\dimn X^*\leq \dimn A-1$, which is in contradiction with
$A$ being a $2$-atom of $A$ (Lemma~\ref{lem:2atom}). Therefore
$\dimn A^t=\dimn L-1$.
To conclude, observe that Lemma~\ref{lem:ak} gives $\dimn A^t =
\dimn A+(t-1)(\dimn A-1)$,
whence
$$\dimn L= 2 + t(\dimn A-1).$$
\end{proof}

As an immediate consequence of Lemma~\ref{lem:dim(A)even}, we have
that if $\dimn L-2=m-2$ is prime, then atoms can only have dimension $2$ and by
Lemmas~\ref{lem:prog} and~\ref{lem:T} the linear version of Vosper's theorem holds for any 
extension of degree $m$ over any field, given that there is no
intermediate subfield. This is the case in particular if $p$ and $p-2$ are a pair of twin primes and if
$L/F$ is of degree $p$. Specifically:

\begin{theorem}\label{thm:twin}
  Let $L/F$ be a finite extension such that $F$ is algebraically
  closed in $L$. Suppose
furthermore that $[L:F]-2$ is prime.
Let $S, T$ be subspaces of $L$ such that $2\le \dimn S, \dimn T$ and
$\dimn ST\le \dimn L-2$. If
$$
\dimn ST=\dimn S+\dimn T-1,
$$
then there are bases of $S$ and of $T$ respectively of the form 
$$\{g,ga,\ldots
ga^{\dim S-1}\}\quad \text{and} \quad 
\{ g', g'a,\ldots ,g'a^{\dim T-1}\}$$
for some $g,g',a\in L$.
\end{theorem}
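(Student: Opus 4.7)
The plan is to assemble the machinery already developed in Sections~\ref{sec:con} and~\ref{sec:2atom} and exploit the twin-prime hypothesis exactly once, as a divisibility constraint. First I would verify that $S$ admits a $2$-atom. The hypothesis $\dim(ST)=\dim(S)+\dim(T)-1\le\dim(L)-2$ means that $T$ lies in $\cX_2$ and witnesses $\partial_S T=\dim(S)-1$. Combined with the linear Cauchy--Davenport inequality \eqref{eq:cd}, which is available because $L/F$ is finite separable without proper intermediate extensions, this gives $\kappa_2(S)=\dim(S)-1$, so $\cX_2\neq\emptyset$ and a $2$-atom $A$ of $S$ exists. Replacing $A$ by $\alpha A$ for suitable $\alpha\in L^\ast$, I may assume $1\in A$.

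Next, set $n=\dim(A)\ge 2$. The goal is to prove $n=2$. If $n>2$ then Lemma~\ref{lem:dim(A)even} applies and yields
\[
\dim(L)\equiv 2\pmod{n-1},
\]
i.e.\ $(n-1)\mid(\dim(L)-2)$. Since $\dim(L)-2$ is prime by hypothesis, the only possibilities are $n-1=1$ (excluded by $n>2$) or $n-1=\dim(L)-2$, that is $n=\dim(L)-1$. I would rule this last case out by Corollary~\ref{cor:A<A*}: we would have
\[
\dim(L)\ge 2\dim(A)+\kappa_2(S)=2(\dim(L)-1)+(\dim(S)-1),
\]
which together with $\dim(S)\ge 2$ forces $\dim(L)\le 1$, contradicting the existence of a $2$-atom. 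Hence $n=2$.

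Having $\dim(A)=2$ with $1\in A$, write $A=F\oplus Fa$ for some $a\in L\setminus F$. Because $A$ is a $2$-fragment of $S$, we have $\dim(AS)=\dim(S)+1\le\dim(L)-1$, so Lemma~\ref{lem:prog} produces $g\in S$ for which $\{g,ga,\dots,ga^{\dim(S)-1}\}$ is a basis of $S$. Feeding this basis of $S$ together with the equality $\dim(ST)=\dim(S)+\dim(T)-1\le\dim(L)-1$ into Lemma~\ref{lem:T} immediately yields a basis of $T$ of the required form $\{g',g'a,\dots,g'a^{\dim(T)-1\}}$, completing the proof.

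The argument is essentially a bookkeeping exercise: all the real content sits in Lemma~\ref{lem:dim(A)even} and the fragment duality of Lemma~\ref{lem:F*}/Corollary~\ref{cor:A<A*}. The only place where the twin-prime hypothesis enters is the divisibility step, which is precisely what constrains the atom to have dimension exactly $2$; no other obstacle is expected.
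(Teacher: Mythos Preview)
Your proof is correct and follows the same approach as the paper: use Lemma~\ref{lem:dim(A)even} to force the divisibility $(n-1)\mid(\dim(L)-2)$, conclude $n=2$, and then invoke Lemmas~\ref{lem:prog} and~\ref{lem:T}. You are simply more explicit than the paper in ruling out the divisor $n-1=\dim(L)-2$ via Corollary~\ref{cor:A<A*}; note that the same exclusion also follows immediately from $A\in\cX_2$, since $\dim(AS)=\dim(A)+\dim(S)-1\le\dim(L)-2$ already gives $\dim(A)\le\dim(L)-3$.
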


Without the condition that $\dimn L-2$ is prime some possible dimensions
other than $n=2$ escape Lemma~\ref{lem:dim(A)even}, although it certainly rules out the case
where $n$ is odd and $\dimn L$ is a prime, a fact which will be used
later on. To remove the remaining cases we introduce more tools.

\section{Sidon spaces and quadratic forms}\label{sec:sidon}
The Intersection Theorem~\ref{thm:intersection} gives a key property
of $2$--atoms that in extensions $L/F$ without proper
finite sub-extensions of $F$, translates into:
\begin{equation}\label{eq:sidon}
\forall x\in L\setminus F, \quad \dim (A \cap xA)\le 1.
\end{equation}

We note that  \eqref{eq:sidon} implies
$$
\forall x,y,z,t\in A\setminus\{0\},\; xy=zt  \Rightarrow
\{ Fx,Fy\}=\{Fz,Ft\}
$$
This is 
because since $x^{-1}A\cap z^{-1}A$ contains $F$, we deduce
that either $x$ and $z$ are $F$-proportional, from which the conclusion
follows, or that $x^{-1}A\cap z^{-1}A=F$ by \eqref{eq:sidon} from
which it follows that $x^{-1}t$ and $z^{-1}y$ each generate the
constant field $F$.

This suggests calling a subspace which satisfies \eqref{eq:sidon} a
{\em Sidon space}, by analogy with classical Sidon sets. We make the
remark that when a space $A$ has a basis in geometric progression then
there exists an $x\in L$ such that $\dim(A\cap
xA)=\dimn A-1$. Therefore Sidon spaces can be thought of as spaces
that are ``furthest away'' from a space with a basis in geometric progression.

For a Sidon space we clearly have
\begin{equation}
  \label{eq:sidon2}
  \dimn A^2\ge \dim (A+aA)\ge 2\dimn A-1,
\end{equation}
for any $a\in A\setminus F$. 
According to Lemma~\ref{lem:2atom}, 
if $A$ is a
$2$--atom of some set $S$ satisfying \eqref{eq:SA} in an extension
without proper finite subextensions, then the inequalities in \eqref{eq:sidon2} are actually equalities.
To recap we have:

\begin{proposition}\label{prop:sidon}
Let $L/F$ be an extension 
and suppose $F$ algebraically closed in $L$. Let $S$ be a subspace of
$L$ with finite dimension $\geq 2$ and let $A$ be a $2$-atom of $S$.
Then $A$ is a Sidon space. Furthermore if
$\dimn SA=\dimn S+\dimn A-1$ then $\dimn A^2=2\dimn A-1$.
\end{proposition}

Now classical Sidon sets $\cS$ of integers (or Sidon sets in abelian groups)
have the property that $|\cS + \cS| = \binom{|\cS|+1}{2}$ which
implies in particular that $|\cS + \cS| = 2|\cS|-1$ if and only if
$|\cS| \leq 2$. If we let ourselves be guided by the additive analogy, we
may be led to expect for a moment that for any Sidon space $A$ it is true that 
$\dimn A =\binom{\dim A+1}{2}$. This would immediately imply that the only Sidon
spaces $A$ such that $\dimn A^2 = 2\dimn A-1$ are of dimension
$\leq 2$. It is however {\em not true} in general that $\dimn A
=\binom{\dim A+1}{2}$ for Sidon spaces, even in extensions
of prime order.
 
For the sake of explicitness, here is a counter-example:
take for $F$ the finite
field of size~$2$, for $L$ the field of size $2^{19}$, and let $A$ be the
$F$-vector with basis $(1,\alpha,\alpha^7,\alpha^{12}+\alpha^2+1)$
where $\alpha$ is a root of the irreducible polynomial 
$X^{19} + X^{14} + X^{10} + X^7 + X^2 + X + 1$. Computing dimensions yields
$\dimn A^2=9<\binom{\dim A+1}{2}=10$.

However, for the purpose of deriving Theorem~\ref{thm:main}, we only
need to prove that the only Sidon spaces that
satisfy \eqref{eq:sidon2} with equalities are of dimension $\leq 2$. 
If $F$ is allowed to be any field, again this is not true.
After we make the connection between Sidon spaces and quadratic forms,
we shall, at the end of this section, give an example of a Sidon space of dimension $3$ that
satisfies \eqref{eq:sidon2} with equalities.

When $F$ is a finite field it {\em is} true though that Sidon spaces
satisfying \eqref{eq:sidon2} with equalities must be of dimension
$\leq 2$. Specifically:
\begin{theorem}\label{thm:sidon}
 Let $F$ be a finite field and let $L$ be an extension field of
 $F$. Let $A$ be a Sidon subspace of $L$ of finite dimension
 $\dimn A\geq 3$. Then $\dimn A^2>2\dimn A-1$.
\end{theorem}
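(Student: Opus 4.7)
The plan is to translate the hypothesis $\dim(A^{2}) = 2n-1$ (where $n = \dim(A) \geq 3$) into the existence of a large packing in the space $\QQ_{n}$ of quadratic forms in $n$ variables over $F$, equipped with the weight introduced in the overview, and then to invoke the bound \eqref{eq:(n-1)(n-2)/2} promised for Section~\ref{sec:codes} to derive a contradiction. Fix a basis $a_{1}, \ldots, a_{n}$ of $A$ and define the $F$-linear map $\Phi\colon \QQ_{n} \to A^{2}$ by
$$
\Phi\Bigl(\sum_{i\le j} c_{ij} x_{i} x_{j}\Bigr) = \sum_{i\le j} c_{ij}\, a_{i} a_{j}.
$$
By construction $\Phi$ is surjective, so $\dim_{F}(\ker \Phi) = \binom{n+1}{2} - \dim(A^{2})$.

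The key step is to show that the Sidon property of $A$ forces $\ker \Phi$ to contain no nonzero quadratic form of weight $\leq 2$. The restriction of $\Phi$ to linear forms is injective because $(a_{i})$ is a basis of $A$. If $Q = \ell\ell'$ has weight $1$ and $\Phi(Q) = \Phi(\ell)\Phi(\ell') = 0$ in the field $L$, then some $\ell$ vanishes, contradicting $Q\neq 0$. If $Q = \ell_{1}\ell_{1}' + \ell_{2}\ell_{2}'$ has weight exactly $2$, then all four $\Phi(\ell_{j}), \Phi(\ell_{j}')$ are nonzero (else $Q$ would have weight $\le 1$), and $\Phi(Q)=0$ gives $\Phi(\ell_{1})\Phi(\ell_{1}') = \Phi(-\ell_{2})\Phi(\ell_{2}')$. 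The Sidon property then forces $\{F\Phi(\ell_{1}), F\Phi(\ell_{1}')\} = \{F\Phi(\ell_{2}), F\Phi(\ell_{2}')\}$; by injectivity of $\Phi$ on linear forms this means $\ell_{1} = \alpha \ell_{2}$, $\ell_{1}' = \beta \ell_{2}'$ (or the swapped pairing) with $\alpha\beta = -1$, so $Q = (\alpha\beta + 1)\ell_{2}\ell_{2}' = 0$, again a contradiction.

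Now suppose for contradiction that $\dim(A^{2}) = 2n - 1$. Then
$$
\dim_{F}(\ker \Phi) = \binom{n+1}{2} - (2n-1) = \frac{(n-1)(n-2)}{2},
$$
so setting $\CC = \ker \Phi$ we have $|\CC| = q^{(n-1)(n-2)/2}$ where $q = |F|$. Any two distinct elements of $\CC$ differ by a nonzero element of $\CC$, which by the previous step has weight at least $3$. This contradicts the strict inequality \eqref{eq:(n-1)(n-2)/2}, which asserts $|\CC| < q^{(n-1)(n-2)/2}$ for any such packing. Hence $\dim(A^{2}) \neq 2n-1$, and combined with the Cauchy--Davenport-type lower bound $\dim(A^{2}) \geq 2n-1$ recalled in \eqref{eq:sidon2}, we conclude $\dim(A^{2}) > 2n - 1$.

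The only genuinely nontrivial ingredient in this plan is the packing bound \eqref{eq:(n-1)(n-2)/2}, which is precisely what Section~\ref{sec:codes} is devoted to via the Delsarte linear programming method; the rest of the argument is essentially a bookkeeping translation of the Sidon condition into a minimum-distance condition in $(\QQ_{n}, \wt)$. Two points deserve care in the write-up: verifying that the weight-$2$ analysis really exhausts all cases of the pairing dictated by the Sidon property (including the swap $\ell_{1} \leftrightarrow \ell_{2}'$), and checking that the characteristic-$2$ situation causes no issue, since the manipulation $(\alpha\beta+1) = 0$ with $\alpha\beta = -1$ is valid in every characteristic.
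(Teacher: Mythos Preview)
Your proposal is correct and follows essentially the same route as the paper: define $\Phi:\QQ_n\to A^2$, show that the Sidon property forces $\ker\Phi$ to have minimum weight $\geq 3$, compute $\dim(\ker\Phi)=(n-1)(n-2)/2$ under the assumption $\dim(A^2)=2n-1$, and invoke the packing bound of Section~\ref{sec:codes} (Theorem~\ref{Theorem main}) for the contradiction. Your weight-$2$ case analysis is slightly more explicit than the paper's but identical in substance, and your final appeal to \eqref{eq:sidon2} to upgrade $\neq$ to $>$ is a detail the paper leaves implicit.
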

Note that we do not need to suppose that $L/F$ is finite and of prime
degree. 

Theorem~\ref{thm:sidon} together with Proposition~\ref{prop:sidon}
will in particular establish that the $2$-atom
of $S$ in Theorem~\ref{thm:main} can only have dimension $2$, and
Theorem~\ref{thm:main} will follow from Lemmas~\ref{lem:prog} and~\ref{lem:T}.

To prove Theorem~\ref{thm:sidon} we shall transform our problem into a
problem in the space of quadratic forms.

Let $\QQ_n$ denote the vector  space  of homogeneous polynomials of
degree $2$ 
in the variables $x_1,\dots, x_n$ with coefficients in the field
$F$. A typical element of $\QQ_n$ will be denoted 
\begin{equation}
Q=\sum_{1\leq i\leq j\leq n} a_{ij} x_i x_j.
\end{equation} 
The $F$-vector space $\QQ_n$ is of dimension $n(n+1)/2$ and we can
identify it with the space of quadratic forms over $F^n$. Similarly,
let $\LL_n$ denote the space of linear forms over $F^n$, identified
with the space of homogeneous polynomials of degree $1$ in $x_1,\dots,
x_n$.
Let us now introduce a notion of {\em weight} of a quadratic form.
\begin{definition}
  For a non-zero quadratic form $Q\in\QQ_n$, let its {\em weight} equal the
  smallest integer $k$ such that $Q$ can be expressed as a sum of $k$
  products of linear forms in $x=(x_1,\ldots ,x_n)$.
\begin{equation}
\wt(Q):=\min\{k \ :\ Q=\ell_1(x)\ell_1'(x)+\dots+\ell_k(x)\ell_k'(x),
\ \ell_i,\ell_j'\in \LL_n, 1\leq i,j\leq k\}.
\end{equation}
\end{definition}

If $\CC$ is a set of quadratic forms, we will
call the minimum weight of $\CC$ the smallest weight of the difference
between two distinct quadratic forms of $\CC$:
\begin{equation}
\wt(\CC):=\min\{ \wt(Q-Q')\ : \ (Q,Q')\in \CC\times \CC, \ Q\neq Q'\}.
\end{equation}
We note that, if $\CC$ is a linear subspace of $\QQ_n$, then 
$\wt(\CC)=\min\{ \wt(Q)\ : \ Q\in \CC, \ Q\neq 0\}$.

Now let $A$ be a Sidon space of dimension $n$ in some extension of $F$. Let $(a_1,\ldots
,a_n)$ be a basis of $A$. Consider the
  homomorphism $\Phi$ of vector spaces $\LL_n\rightarrow A$ defined by the mapping
$$x_1\mapsto a_1, \ldots ,x_n\mapsto a_n.$$
This homomorphism extends in a natural way to a homomorphism of
$F$-vector spaces
$$\Phi~: \QQ_n \rightarrow A^2$$
through the relations $x_ix_j\mapsto a_ia_j$. Note that for any
$\ell,\ell'\in\LL_n$, the map $\Phi$ satisfies
$\Phi(\ell\ell')=\Phi(\ell)\Phi(\ell')$. Consider the subspace
$\CC$ of $\QQ_n$, $\CC=\ker\Phi$.

\begin{proposition}
 $A$ is a Sidon space if and only if
 for any $Q\in\CC, Q\neq 0$, we have $\wt(Q)\geq 3$.
\end{proposition}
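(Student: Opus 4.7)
The plan is a direct bidirectional argument hinging on the fact that $\Phi$ restricts to an $F$-linear isomorphism $\LL_n\to A$ (since $a_1,\dots,a_n$ is a basis of $A$); in particular a linear form $\ell\in\LL_n$ is zero iff $\Phi(\ell)=0$, and every element of $A$ has a unique pre-image in $\LL_n$ under this identification.

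\emph{Only if.} Assume $A$ is Sidon and let $Q\in\CC$ be non-zero. If $\wt(Q)=1$, write $Q=\ell\ell'$ with $\ell,\ell'$ non-zero linear forms; then $\Phi(Q)=\Phi(\ell)\Phi(\ell')\ne 0$ in the field $L$, contradicting $Q\in\ker\Phi$. If $\wt(Q)=2$, write $Q=\ell_1\ell_1'+\ell_2\ell_2'$ where necessarily none of the four factors is zero (else $\wt(Q)\le 1$), and set $u=\Phi(\ell_1)$, $u'=\Phi(\ell_1')$, $v=\Phi(\ell_2)$, $v'=\Phi(\ell_2')\in A\setminus\{0\}$. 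Then $\Phi(Q)=0$ reads $uu'=-vv'$; put $\xi=u/v\in L^*$. If $\xi\in F$, then $\ell_1=\xi\ell_2$ reduces $Q$ to $\ell_2(\xi\ell_1'+\ell_2')$, which is either zero (contradicting $Q\ne 0$) or a product of two non-zero linear forms (contradicting $\wt(Q)=2$). If $\xi\notin F$, then both $v$ and $u'$ lie in $A\cap\xi^{-1}A$ (from $\xi v=u\in A$ and $\xi u'=-v'\in A$), so \eqref{eq:sidon} forces $u'=\lambda v$ with $\lambda\in F^*$; this propagates to $v'=-\lambda u$, $\ell_1'=\lambda\ell_2$, $\ell_2'=-\lambda\ell_1$, giving $Q=\lambda\ell_1\ell_2-\lambda\ell_1\ell_2=0$, again a contradiction.

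\emph{If.} Assume every non-zero $Q\in\CC$ has weight at least $3$, and fix $x\in L\setminus F$. Suppose for contradiction that $\dim(A\cap xA)\geq 2$, and pick $F$-independent $u,v\in A\cap xA$; write $u=xu'$ and $v=xv'$ with $u',v'\in A\setminus\{0\}$, so that $uv'=u'v$. Lifting to non-zero linear forms $\ell_u,\ell_v,\ell_{u'},\ell_{v'}\in\LL_n$, the quadratic form $\ell_u\ell_{v'}-\ell_{u'}\ell_v$ lies in $\CC$ with weight at most $2$, hence is zero by hypothesis. The polynomial identity $\ell_u\ell_{v'}=\ell_{u'}\ell_v$ in the UFD $F[x_1,\dots,x_n]$, together with the irreducibility of non-zero linear forms, yields either $\ell_u\propto\ell_v$ (contradicting independence of $u,v$) or $\ell_u\propto\ell_{u'}$, say $u=\alpha u'$ with $\alpha\in F^*$; combined with $u=xu'$ this forces $x=\alpha\in F$, again a contradiction. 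Hence $\dim(A\cap xA)\leq 1$ for every $x\in L\setminus F$, which is exactly \eqref{eq:sidon}.

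The main subtlety is the weight-$2$ case of the forward direction: the Sidon hypothesis does not immediately say $Q=0$, only that certain pairs of elements of $A$ are $F$-proportional, so one must carefully track the scalar $\lambda$ arising from $\dim(A\cap\xi^{-1}A)\leq 1$ to see that the two summands $\ell_1\ell_1'$ and $\ell_2\ell_2'$ cancel exactly rather than merely combining into a single product. Once this is done, unique factorization in $F[x_1,\dots,x_n]$ handles the reverse implication and no further machinery is needed.
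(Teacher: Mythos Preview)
Your proof is correct and follows essentially the same approach as the paper's. The only differences are presentational: in the forward direction you appeal directly to the defining inequality \eqref{eq:sidon} (via the element $\xi=u/v$) rather than to the equivalent $xy=zt$ reformulation the paper states just after \eqref{eq:sidon}, and you supply a full unique-factorisation argument for the reverse implication that the paper dismisses as ``obvious''. Your careful tracking of the scalar $\lambda$ in the weight-$2$ case makes explicit a step the paper compresses into the phrase ``$Q$ is $F$-proportional to $\ell_1\ell_1'$''.
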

\begin{proof}
  That $\wt(Q)>1$ is simply that elements of $A^2$ live in a
  field where products of non-zero elements are non-zero. If
  $\wt(Q)=2$, meaning that $Q=\ell_1\ell_1' + \ell_2\ell_2'$, then
  setting
  $x=\Phi(\ell_1),y=\Phi(\ell_1'),z=\Phi(\ell_2),t=\Phi(\ell_2')$ we
  have that $xy+zt=0$ in $A^2$. But
  $A$ being a Sidon space implies, according to the remark following
  \eqref{eq:sidon}, that either $x$ is an $F$-multiple of $z$ and $y$
  is an $F$-multiple of $t$ or $x$ is an $F$-multiple of $t$ and $y$
  is an $F$-multiple of $z$. Since the mapping $\Phi$ is one-to-one
  from $\LL_n$ to $A$ we deduce from this that  $Q$ is $F$-proportional to $\ell_1\ell_1'$, 
 which   contradicts $\wt(Q)=2$. The reverse implication should be obvious.
\end{proof}

\paragraph{\it Example of a Sidon Space.} Let $\Q$ denote the rational
field and consider the extension $L=\Q[x,y]/(x^2+y^2+1)$. 
The polynomial $x^2+y^2+1$ is absolutely irreducible over $\Q$,
meaning that $L$ contains no proper finite intermediate extension of $\Q$.
We claim that
the subspace $A$ with basis $(1,x,y)$ is a Sidon space such that
$\dimn A^2=5=2\dimn A-1$. The statement regarding the dimensions is
obvious: to see that $A$ is indeed a Sidon space, suppose the contrary
which would mean that four $\Q$-linear combinations $a,b,c,d$ of
$1,x,y$ are such that $ab+cd=x^2+y^2+1$. Switching to projective
coordinates, this would mean exactly that the quadratic form $x^2+y^2+z^2$ is
of weight $2$ over $\Q$. But it is readily seen that this form is
actually of weight $3$ over $\Q$, therefore $A$ can only be a Sidon
space.

As remarked at the beginning of the section, the definition of a Sidon
space contradicts, for dimensions at least $3$, having a basis in
geometric progression. Therefore the above example is also an example
of a space $A$ satisfying $\dimn A^2=2\dimn A-1$ which does not have
a basis in geometric progression. The setting is that of an infinite
extension but this Sidon space can be transplanted as follows into a finite 
 extension without intermediate subfields. 

\paragraph{\it Example of a Sidon Space in a finite extension.}
Let $L=\Q(a)$ where $a$ is a root of the irreducible polynomial
$P(x)=2+2x+2x^2+2x^4+2x^5+x^8 = 1+x^2+(1+x+x^4)^2$. It can be checked
that the Galois group of this polynomial is the full symmetric group
$S_8$
which implies that the extension $L/\Q$ has no intermediate extension.
Setting $A$ to be the subspace of $L$ generated by $1,a,b=1+a+a^4$, we
again have $\dimn A^2=5$ since $1+a^2+b^2=0$. 
We also have that if $\Phi$ is the mapping from the space of quadratic
forms in the variables $x,y,z$ over $\Q$ into $A$ defined by $x\mapsto
a$, $y\mapsto b$, $z\mapsto 1$, then $\ker\Phi$ has dimension $1$ and
is generated by the quadratic form $x^2+y^2+z^2$ which is of weight $3$
as mentioned above. Therefore $A$ is a Sidon set in $L$.

We remark that the quadratic form $x^2+y^2+z^2$ is anisotropic over
$\Q$, and that this makes it of weight $3$. Similar examples can be
constructed whenever we have such a quadratic form: these forms do not exist over finite fields
however, which motivates our quest for
Theorem~\ref{thm:main} and our study of packings of quadratic forms of
minimum weight $3$ in the next section.

Finally, note that if $\dimn A^2=2\dimn A-1=2n-1$, we have 
$\dim(\ker\Phi) =n(n+1)/2-(2n-1) =
(n-1)(n-2)/2$. Theorem~\ref{thm:sidon} will therefore follow if we can prove
that for every $n\geq 3$, a subspace of $\QQ_n$ of dimension
$(n-1)(n-2)/2$ and of minimum weight $3$ does not exist.

\section{Codes in the space of quadratic forms over finite fields}\label{sec:codes}

In this section, $q$ is a power of a prime and $F=\F_q$ is the finite field with $q$ elements.
As mentioned in the previous section, we will prove the following theorem, which will
enable us to conclude the proof of Theorem~\ref{thm:main}.

\begin{theorem}\label{Theorem main}
Let $n\geq 3$, and let $\CC\subset \QQ_n$, with  $\wt(\CC)\geq 3$. Then,
$|\CC| <  q^{(n-1)(n-2)/2}$.
\end{theorem}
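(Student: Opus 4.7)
My plan is to combine a matching sphere-packing inequality $|\CC|\leq q^{(n-1)(n-2)/2}$ with an application of the Delsarte linear programming method to promote it to a strict inequality. For the sphere-packing step, I would exploit a large family of subspaces of $\QQ_n$ all of whose nonzero elements have weight at most~$2$. Concretely, for any $2$-dimensional $F$-subspace $L\subset\LL_n$ with basis $\{\alpha,\beta\}$, set $V_L:=\alpha\LL_n+\beta\LL_n\subset \QQ_n$. The kernel of the natural surjection $\LL_n\oplus\LL_n\to V_L$, $(\ell_1,\ell_2)\mapsto\alpha\ell_1+\beta\ell_2$, is the line $F\cdot(\beta,-\alpha)$ (using that $\alpha,\beta$ are coprime in the UFD $F[x_1,\ldots,x_n]$), so $\dim V_L=2n-1$, and by construction every nonzero element of $V_L$ has weight $\leq 2$. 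Hence $(\CC-\CC)\cap V_L=\{0\}$, so $\CC$ picks at most one representative in each coset of $V_L$, giving $|\CC|\leq q^{n(n+1)/2-(2n-1)}=q^{(n-1)(n-2)/2}$. The case $n=3$ is then immediate: Chevalley--Warning forces every ternary quadratic form over $\F_q$ to be isotropic and thus of weight $\leq 2$, so $\wt(\CC)\geq 3$ yields $\CC-\CC=\{0\}$ and $|\CC|\leq 1 < q$.

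For $n\geq 4$, to obtain strict inequality I would apply the Delsarte method in the additive group $G=(\QQ_n,+)\cong \F_q^{n(n+1)/2}$. The goal is to exhibit a function $f:G\to\mathbb{R}$ satisfying $f(0)>0$, $f(Q)\leq 0$ for all nonzero $Q$ of weight $\leq 2$, and with Fourier transform $\hat f\geq 0$; then the standard Delsarte identity $\sum_{c,c'\in\CC}f(c-c')=\sum_\chi\hat f(\chi)\bigl|\widehat{\mathbf 1_\CC}(\chi)\bigr|^2\geq 0$ together with the sign constraints yields $|\CC|\leq \hat f(0)/f(0)$. A natural candidate is a $\mathrm{GL}_n(F)$-invariant averaged combination of the indicator functions (or characteristic characters) of the subspaces $V_L$, chosen so that the Fourier coefficients of $f$ depend only on $\mathrm{GL}_n(F)$-orbits of characters of $G$, dramatically simplifying the LP.

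The main obstacle is this Fourier computation: unlike rank distance on bilinear or alternating forms, the weight function on $\QQ_n$ does not belong to a classical association scheme, so the eigenvalues of the relevant Cayley graph have to be computed from scratch. The strict gap $\hat f(0)/f(0)<q^{(n-1)(n-2)/2}$ will ultimately reflect the fact that the union $\bigcup_L V_L$ taken over all $2$-dimensional $L\subset\LL_n$ properly contains any single $V_L$: a perfect packing with respect to one $V_L$ would necessarily waste weight-$\leq 2$ capacity outside of that $V_L$, and the LP dual is designed to cash in exactly this slack into the strict inequality for all $n\geq 4$ and all~$q$.
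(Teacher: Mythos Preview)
Your anticode step is exactly the paper's: the subspaces $V_L=\A_{\ell_1,\ell_2}$ have dimension $2n-1$ and consist entirely of forms of weight $\leq 2$, yielding $|\CC|\leq q^{(n-1)(n-2)/2}$. Your $n=3$ shortcut is correct and pleasant---over $\F_q$ every nonzero form in $\QQ_3$ has weight $\leq 2$, so $|\CC|\leq 1<q$---though the paper does not isolate this case.

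For $n\geq 4$ your overall plan coincides with the paper's, but there is a genuine gap in the mechanism you propose. The ``natural candidate'' $f=\sum_{g}\1_{\A^g}$ (the $\Gl_n$-average of the indicators of the $V_L$) does have nonnegative Fourier transform, but it produces \emph{exactly} the anticode bound, not a strict improvement: one computes $f(0)=|G|$ and $\hat f(0)=|G||\A|/|\QQ_n|$, so the Delsarte inequality collapses to $|\CC|\leq|\QQ_n|/|\A|=q^{(n-1)(n-2)/2}$. Your final sentence about the LP ``cashing in the slack'' from $\bigcup_L V_L\supsetneq V_L$ is suggestive but does not name a concrete test function or constraint that would do this; as stated, nothing in your plan forces strictness. (A minor point: your sign condition is backwards---you need $f(Q)\leq 0$ on the \emph{allowed} nonzero differences, i.e.\ on forms of weight $\geq 3$, not on those of weight $\leq 2$.)

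What the paper actually does is an \emph{equality-case analysis}. Assuming $|\CC|=q^{(n-1)(n-2)/2}$, every neglected Fourier term in the Delsarte identity must vanish, which forces $\sum_{Q\in\CC}\chi_B(Q)=0$ for every $B\in\SSS_n$ with $\rk(B)\leq n-2$. Summing over $G$-orbits gives linear equations $\sum_t\chi_s(t)X_t=0$ in the orbit-counts $X_t=|\CC\cap\OO_t|$. Together with the trivial constraints ($X_0=1$, $\sum_tX_t=|\CC|$, and $X_t=0$ for the five types of weight $\leq 2$) this is an overdetermined system. A change of variables to dual quantities $Y_s=|\CC|^{-1}\sum_t\chi_s(t)X_t$ reduces everything to a $4\times 3$ linear system in the three unknowns indexed by types of rank $n-1,n$; the coefficients $\chi_s(t)/|\OO_s|$ for $t=(1,0),(2,\pm 1)$ are computed explicitly, and the system is shown to be inconsistent for all $n\geq 3$ and all $q$. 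This explicit computation---precisely the ``Fourier computation from scratch'' you flag as the obstacle---is where the entire content of the proof lies, and your proposal does not supply it.
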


The inequality $|\CC| \leq   q^{(n-1)(n-2)/2}$ follows from a simple
packing argument.
Indeed, if  $\ell_1$ and $\ell_2$ are two linearly independent elements of $\LL_n$, let
\begin{equation}\label{eq A}
\A_{\ell_1,\ell_2}:=\{ Q=\ell_1(x)\ell_1'(x)+\ell_2(x)\ell_2'(x)\ :\  \ \ell'_i\in \LL_n\}\subset \QQ_n.
\end{equation}
$\A_{\ell_1,\ell_2}$ is a subspace of $\QQ_n$, of dimension $2n-1$: if $\ell_3,\dots,\ell_n$ are such that   $\{\ell_1,\ell_2,\dots,\ell_n\}$ 
is a basis of $\LL_n$, then $\{\ell_1(x)\ell_i(x), \ell_2(x)\ell_j(x), 1\leq i\leq n, 2\leq j\leq n\}$ is a basis of $\A_{\ell_1,\ell_2}$.
Moreover, for all $Q\in \A_{\ell_1,\ell_2}$,  $\wt(Q)\leq 2$. 
So, if $\CC$ has minimum weight at least $3$, and if $(Q,Q')\in \CC^2$, then 
$Q-Q'\notin \A_{\ell_1,\ell_2}$ unless $Q=Q'$. Said differently,  the
elements of $\CC$ are pairwise distinct in the quotient space
$\QQ_n/\A_{\ell_1,\ell_2}$. As a consequence, their cardinalities satisfy
\begin{equation}\label{bound}
|\CC|\leq |\QQ_n/\A_{\ell_1,\ell_2}|=q^{(n-1)(n-2)/2}.
\end{equation}
The bound \eqref{bound} is an instance of an {\em anticode} bound in
coding theory terminology \cite[Ch 17]{mcws}: the set
$\A_{\ell_1,\ell_2}$ is an anticode, i.e. a set of diameter
$2$. Henceforth we refer to $\CC$ as a {\em code}, i.e. a set
$\CC\subset \QQ_n$ with a minimum weight condition.

The claim in Theorem \ref{Theorem main} is therefore that the inequality in
\eqref{bound} is not attained when $n\geq 3$. Improving upon
\eqref{bound} seems out of reach by elementary packing arguments and
we shall need to involve group actions. As mentioned in the
introduction, we shall derive Theorem \ref{Theorem main} by applying the
Delsarte ``linear programming'' method.

Let us first give a short sketch of the proof. The space $\QQ_n$ affords the action of the group $G:=\F_q^*\times \Gl(n,q)$, where $\F_q^*$ acts by scalar multiplication on quadratic forms and the linear group $\Gl(n,q)$ acts linearly on the variables. This action preserves 
$\LL_n$, and hence the weight of a quadratic form. The orbits of the action of $G$ on $\QQ_n$ will be denoted $\{\OO_t,\ t\in \TT_n\}$, where
the index set $\TT_n$ will be specified later. For a code of
quadratic forms  $\CC$, let $X_t:=|\CC\cap
\OO_t|$. In a first step, we will prove that, under the assumptions
$\wt(\CC)\geq 3$ and $|\CC|=q^{(n-1)(n-2)/2}$, these numbers satisfy a
certain system of linear equations (Proposition~\ref{prop:primal system}). Some of the equations are
straightforward translations of the assumptions, but the most
interesting ones arise from an application of Delsarte linear
programming method \cite{Del}, that focuses on an appropriate test function
with non negative Fourier coefficients.  In a second step,  we will
prove that this system doesn't have any real solutions when $n\geq 3$. To this end, we will
work on  a much smaller linear system (Proposition~\ref{prop eq all
  dual}) satisfied by certain variables
related to the Fourier
coefficients of the characteristic function of $\CC$. When the code
$\CC$ is linear, these new variables can be interpreted in terms of
the dual code of $\CC$. The duality notion that we shall rely on is
somewhat non-standard and associates to a code of quadratic forms a
dual code of symmetric bilinear forms.

We start by recalling some classical results on the classification of
quadratic forms and symmetric bilinear forms over finite fields.

\subsection{Group actions on quadratic forms and symmetric bilinear forms over finite
  fields}\label{subsection classification}

Let  $x=(x_1,\ldots ,x_n)$, and
$u=(u_{i,j})_{1\leq i,j\leq n}\in \Gl(n,q)$. Denoting by $u^t$ the
transpose matrix of $u$, to any
quadratic form 
$Q: x\mapsto Q(x)$ we can associate the form $Q^u: x\mapsto Q(xu^t)$,
\begin{equation*}
Q^u(x)=Q\left(\sum_{k=1}^n u_{1,k} x_k,\dots, \sum_{k=1}^nu_{n,k} x_k\right).
\end{equation*}
For $g=(a,u)\in G=\F_q^*\times \Gl(n,q)$, a (right) group action is thus
defined on elements $Q$ of $\QQ_n$ by $Q^g=aQ^u$.
The orbits of $\QQ_n$ under this action
are straightforwardly obtained from the well known description of the
orbits of $\QQ_n$  under $\Gl(n,q)$ (\cite[Chapter 11]{Tay}); we
explicitly describe them in the next proposition, where we
use the notation $Q\sim Q'$ for two $G$-equivalent quadratic forms. 

Let us first recall the notion of \emph{rank} of a quadratic form:  the
rank of $Q$ is the codimension of its \emph{radical} $\Rad_Q$,  the linear space of elements $x\in\F_q^n$ such that $Q(x)=0$
and $B_Q(x,y)=0$ for all $y\in \F_q^n$, where
$B_Q(x,y)=Q(x+y)-Q(x)-Q(y)$ is the symmetric bilinear form associated
to $Q$. A quadratic form  $Q\in \QQ_n$ is said to be non degenerate
if $\rk(Q)=n$. 

Over finite fields, the $G$-orbit of a quadratic form is not
characterised solely by it rank. It turns out that the set  of quadratic forms of given  rank $r$ makes up
one orbit when $r$ is odd, but splits into two orbits when $r$ is even:

\begin{proposition}\label{proposition equivalence Q} If  $Q\in \QQ_n$ is a quadratic form over $\F_q$ in $n$ variables
  of rank $r=\rk(Q)>0$, then one of the following holds:
\begin{enumerate}
\item[(1)] $r$ is odd and $Q\sim \sum_{i=1}^{(r-1)/2} x_{2i-1}x_{2i}+ x_r^2$. 
\item[(2)] $r$ is even and $Q\sim \sum_{i=1}^{r/2} x_{2i-1}x_{2i}$. 
\item[(3)] $r$ is even and $Q\sim \sum_{i=1}^{r/2-1} x_{2i-1}x_{2i}+
  Q_{0}(x_{r-1},x_r)$, where 
\begin{equation*}
Q_{0}(x_1,x_2)=\begin{cases}  x_1^2- bx_2^2\text{  if }p\neq 2\\
x_1^2+x_1x_2+bx_2^2\text{  if }p=2.
\end{cases}
\end{equation*}
In the above, $b\in \F_q$ is such that 
$b\in \F_q\setminus \F_q^2$ if $p\neq 2$, and   $b\in \F_q\setminus\sigma(\F_q)$ if $p=2$, where
  $\sigma(\F_q)=\{\lambda^2+\lambda,\ \lambda\in \F_q\}$.
\end{enumerate}
\end{proposition}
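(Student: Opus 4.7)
The plan is to deduce the classification under $G=\F_q^*\times \Gl(n,q)$ from the classical classification under $\Gl(n,q)$ alone, as found in \cite[Ch.~11]{Tay}, by analyzing how the additional scaling action fuses $\Gl$-orbits. A straightforward first reduction replaces $Q$ of rank $r$ by a $\Gl$-equivalent form supported on $x_1,\dots,x_r$ and non-degenerate there, so it suffices to treat non-degenerate forms in rank $r=n$.

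I would next recall the $\Gl(r,q)$-classification separately in the two characteristics. In odd characteristic, every non-degenerate $Q$ diagonalizes as $\sum c_i x_i^2$ and the discriminant $\prod c_i$ modulo squares is a complete $\Gl$-invariant, giving two orbits for each $r$ with representatives $\sum x_{2i-1}x_{2i}+\varepsilon x_r^2$ (odd $r$) or $\sum x_{2i-1}x_{2i}+(x_{r-1}^2-\varepsilon x_r^2)$ (even $r$), for $\varepsilon\in\{1,b\}$. In characteristic~$2$, the associated bilinear form $B_Q$ is alternating; when $r$ is odd its radical is one-dimensional and $Q$ restricted to it is a non-zero square, which can be normalized to $x_r^2$ using surjectivity of Frobenius on $\F_q$, yielding a single $\Gl$-orbit. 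When $r$ is even, $B_Q$ is non-degenerate and the Arf invariant $\sum\alpha_i\beta_i\in \F_q/\sigma(\F_q)$ of a normal form $\sum(\alpha_i x_{2i-1}^2+x_{2i-1}x_{2i}+\beta_i x_{2i}^2)$ distinguishes the two orbits.

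Then I would compute how $\lambda\in\F_q^*$-scaling acts on these invariants. In odd characteristic, multiplying $Q$ by $\lambda$ multiplies its Gram matrix by $\lambda$, hence scales the discriminant by $\lambda^r$: when $r$ is odd, $\lambda\mapsto\lambda^r$ is surjective on $\F_q^*/(\F_q^*)^2$, so the two $\Gl$-orbits merge into a single $G$-orbit, with representative $\sum x_{2i-1}x_{2i}+x_r^2$; when $r$ is even, $\lambda^r$ is always a square and the two orbits remain distinct, with representatives $\sum x_{2i-1}x_{2i}$ and $\sum x_{2i-1}x_{2i}+(x_{r-1}^2-bx_r^2)$. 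In characteristic~$2$, the substitution $y_{2i-1}=\lambda x_{2i-1}$, $y_{2i}=x_{2i}$ puts $\lambda Q$ back into standard form by replacing $\alpha_i,\beta_i$ with $\alpha_i/\lambda,\lambda\beta_i$, so the Arf invariant is unchanged, while for odd $r$ the extra $\lambda x_r^2$ is reabsorbed as $(\sqrt{\lambda}\,x_r)^2$; in either case the $G$-orbit structure coincides with the $\Gl$-orbit structure, matching the list in the statement.

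The only real subtlety is the characteristic-$2$ bookkeeping, where the Gram matrix is no longer symmetric and the Arf invariant replaces the discriminant; but once that is in place, the merging analysis under $\F_q^*$ is a direct computation. Everything else, including the reduction to the non-degenerate case and the hyperbolic splitting of a non-degenerate alternating form, is entirely classical and borrowed from \cite[Ch.~11]{Tay}.
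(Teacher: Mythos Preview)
Your proposal is correct and follows exactly the approach the paper indicates: the paper does not give a detailed proof of this proposition but simply remarks that the $G$-orbits ``are straightforwardly obtained from the well known description of the orbits of $\QQ_n$ under $\Gl(n,q)$'' and cites \cite[Chapter~11]{Tay}. You have carried out precisely this straightforward derivation, correctly tracking how the extra $\F_q^*$-scaling acts on the discriminant (odd characteristic) and on the Arf invariant (even characteristic) to determine which $\Gl$-orbits merge.
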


So, a quadratic form over $\F_q$ is either  equivalent to a hyperbolic
form (this is case (2)), or to the direct sum of a hyperbolic form and
of an anisotropic form, i.e. a quadratic form without non trivial
zeroes,  in one (case (1)) or two (case (3)) variables. If $Q$ is a
quadratic form of rank $r$, and if $e\in \{0,1,2\}$ denotes the rank
of its anisotropic component, we will say that $Q$ has \emph{type}
$(r,e)$ and we will write $Q\sim (r,e)$. 

Let us now make the connection between the weight and
the type of a quadratic form. Since the weight of a quadratic form
is left unchanged under the action of $G$, it should be expressible in terms
of its type. Indeed, we have: 

\begin{lemma}\label{lemma weight to type} If $Q\sim (r,e)$, 
$\wt(Q)=(r+e)/2$.
\end{lemma}

\begin{proof} If $Q$ is hyperbolic of rank $2h$,
  meaning that  $Q\sim \sum_{i=1}^{h} x_{2i-1}x_{2i}$, the weight of
  $Q$ is cleary equal to $h$. So, the formula is verified when
  $e=0$. When $e=1$, because $\wt(x_r^2)=1$, we get
  $\wt(Q)=(r-1)/2+1=(r+1)/2$.
When $e=2$, it is immediate that,  with the notation of Proposition
\ref{proposition equivalence Q}, $\wt(Q_0)=2$, so $\wt(Q)=(r/2-1)+2=(r+2)/2$.
\end{proof}

The space $\SSS_n$ of symmetric bilinear forms over $\F_q$ is also
equipped with a natural action of $G$, given by $B^g=au^tBu$, where we
identify symmetric bilinear forms and symmetric matrices. We note that
the $\F_q$-vector spaces $\QQ_n$ and $\SSS_n$ have the same dimension
$n(n+1)/2$. When $q$ is odd, the
correspondence $Q\mapsto B_Q$ defines an  isomorphism of $G$-spaces,
 because $Q$ can be recovered from $B_Q$ thanks to the
formula $B_Q(x,x)=2Q(x)$. But when  $q$
is even,  $B_Q$ is alternating and the correspondence $Q\mapsto B_Q$ is
not one to one. However, for all $q$,  there exists a \emph{non
  degenerate pairing}
between $\QQ_n$ and $\SSS_n$ that behaves well with respect to the
action of $G$. Let us recall that a pairing $(\ ,\ ): \QQ_n\times
\SSS_n \to \C^*$ is an application which is homomorphic with respect
to $Q$ and $B$, i.e. $(Q+Q',B)=(Q,B)(Q',B)$ and $(Q,B+B')=(Q,B)(Q,B')$ for all $Q,Q'\in \QQ_n$,
$B,B'\in \SSS_n$ and that it is said to be non degenerate if
$(Q,B)=1$ for all $B\in \SSS_n$ implies that $Q=0$, and similarly for
$B\in \SSS_n$. We note that the duality between the association schemes of quadratic forms and of symmetric bilinear forms derived from this pairing was already observed in \cite{WWMM}.

\begin{lemma}\label{lemma pairing}
Let $\alpha:(\F_q,+)\to (\C^*,\times)$ be a fixed non trivial character. Let, for
$Q\in \QQ_n$, $Q=\sum_{1\leq i\leq j\leq n} a_{i,j} x_i x_j$ and
  $B\in \SSS_n$, $B=(b_{i,j})_{1\leq i,j\leq n}$, 
\begin{equation}\label{def pairing}
(Q,B):=\alpha\big(\sum_{1\leq i\leq j\leq n} a_{i,j}  b_{i,j} \big).
\end{equation}
Then, this expression defines a non degenerate pairing between $\QQ_n$ and
$\SSS_n$. Moreover, for all $Q\in \QQ_n$, $B\in \SSS_n$, $g\in G$, we have 
\begin{equation}\label{pairing action}
(Q^g, B)=(Q,B^{g^t}) 
\end{equation}
where, if $g=(a,u)\in G$, we denote  $g^t=(a,u^t)$.
\end{lemma}

\begin{proof} It is a straightforward verification.
\end{proof}

The pairing introduced above allows for a convenient description of
the multiplicative characters of the additive group $(\QQ_n,+)$ and of
the way the group $G$ acts on them. Indeed, the characters of
$(\QQ_n,+)$ are in one to one correspondence with $\SSS_n$ and for
every $B\in \SSS_n$ the associated character is given by:
\begin{equation}
\chi_B(Q)=(Q,B)
\end{equation}
where $B\in \SSS_n$. Furthermore, if we define the action of $G$ on
characters by $(g.\chi)(Q):=\chi(Q^g)$, then Lemma \ref{lemma pairing}
translates into:
$$g.{\chi_B}=\chi_{B^{g^t}}.$$

In the next proposition, we recall the description of the orbits of $\SSS_n$ under
the action of $G$. If $q$ is even, and $B\in \SSS_n$ is non degenerate, $W:=\{x\in
\F_q^n\ :\ B(x,x)=0\}$ is a hyperplane, and the restriction of $B$ to
$W$ is alternating, so the description of the $\Gl(n,q)$-orbits of $\SSS_n$ follows
easily from the classification of alternating forms \cite[Chapter
8]{Tay}.  A matrix with a diagonal block
structure $(\begin{smallmatrix}A&0\\0&B\end{smallmatrix})$ is denoted
below by $A\oplus B$. 

\begin{proposition}\label{proposition equivalence S} If  $B\in \SSS_n$
  is a symmetric bilinear  form over $\F_q$ in $n$ variables
  of rank $\rk(B)=r>0$, then one of the following holds:
\begin{enumerate}
\item[(1)] $r$ is odd and $B\sim \oplus_{i=1}^{(r-1)/2}  (\begin{smallmatrix}0&1\\1&0\end{smallmatrix})\oplus (1)$. 
\item[(2)] $r$ is even and $B\sim \oplus_{i=1}^{r/2}  (\begin{smallmatrix}0&1\\1&0\end{smallmatrix})$. 
\item[(3)] $r$ is even and 
$B\sim
\oplus_{i=1}^{r/2-1}(\begin{smallmatrix}0&1\\1&0\end{smallmatrix})
\oplus B_0$
where $B_0=(\begin{smallmatrix}0&1\\1&1\end{smallmatrix})$ if $p=2$,
and $B_0=(\begin{smallmatrix}1&0\\0&-b\end{smallmatrix})$,  $b\in \F_q\setminus \F_q^2$, if $p\neq2$.
\end{enumerate}
\end{proposition}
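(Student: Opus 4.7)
My plan is first to reduce to the non-degenerate case. Choose a basis of $\F_q^n$ adapted to the decomposition $\F_q^n = \Rad(B) \oplus V$ with $\dim V = r$, so that the matrix of $B$ becomes a direct sum of its non-degenerate restriction $B|_V$ and a zero block of size $n-r$. It therefore suffices to classify non-degenerate symmetric bilinear forms on $V$ under $G_r = \F_q^* \times \Gl(r,q)$, and padding each resulting normal form by a zero block on $\Rad(B)$ yields the three cases of the proposition.

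In odd characteristic the classification is quick. The polarization map $Q \mapsto B_Q$ defined by $B_Q(x,y) = Q(x+y) - Q(x) - Q(y)$ is a $G$-equivariant isomorphism $\QQ_n \to \SSS_n$, since $2$ is invertible and $Q$ is recovered from $B_Q(x,x) = 2 Q(x)$. Hence the $G$-orbits on $\SSS_n$ are in bijection with those of Proposition \ref{proposition equivalence Q} via polarization: the summand $x_{2i-1} x_{2i}$ polarizes to the hyperbolic block $\bigl(\begin{smallmatrix}0&1\\1&0\end{smallmatrix}\bigr)$, the term $x_r^2$ polarizes to $(1)$, and the anisotropic plane $x_1^2 - b x_2^2$ polarizes to $\mathrm{diag}(1,-b)$. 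These are precisely the three stated normal forms.

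The substantive work lies in characteristic $2$. Following the hint in the preamble, for non-degenerate $B$ of rank $r$ on $V$, the map $\phi(x) := B(x,x)$ is additive (using characteristic $2$) and satisfies $\phi(\lambda x) = \lambda^2 \phi(x)$; since Frobenius is bijective on $\F_q$, there is a unique linear form $\ell$ on $V$ with $\phi(x) = \ell(x)^2$. If $\ell \equiv 0$, then $B$ is alternating, which forces $r$ to be even, and the classical symplectic normal form theorem gives case (2). If $\ell \not\equiv 0$, set $W = \ker \ell$, a hyperplane of $V$ on which $B|_W$ is alternating. Its radical $W \cap W^\perp$ has dimension $0$ or $1$, and a parity argument based on $\dim(W/(W \cap W^\perp))$ being even shows that $W \cap W^\perp = 0$ precisely when $r$ is odd, yielding case (1) by adjoining to a symplectic basis of $W$ an external vector $v$ with $B(v,v)=1$ orthogonalized against $W$; and $W^\perp \subset W$ precisely when $r$ is even, yielding case (3) after reducing the residual $2 \times 2$ block, using both the $\Gl$- and $\F_q^*$-actions together with the cosets of $\sigma(\F_q)$ in $\F_q$.

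The main obstacle I foresee is verifying that cases (2) and (3) are distinct $G$-orbits in even rank and characteristic $2$. The separating invariant is whether $B$ is alternating, equivalently whether $\ell = 0$; this property is preserved by the full $G$-action because under $B \mapsto a \, u^t B u$ the associated linear form transforms as $\ell \mapsto \sqrt{a}\,(\ell \circ u)$, so $\ell$ vanishes for one representative if and only if it vanishes for all. In parallel, one should check that the two $2\times 2$ blocks $\bigl(\begin{smallmatrix}0&1\\1&1\end{smallmatrix}\bigr)$ (char.\ $2$) and $\bigl(\begin{smallmatrix}1&0\\0&-b\end{smallmatrix}\bigr)$ (char.\ $\neq 2$) are indeed the unique non-split anisotropic orbit representatives, which follows from the standard classification of binary quadratic/bilinear forms.
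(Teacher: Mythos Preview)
Your argument is correct and follows the same route the paper sketches just before the proposition: in odd characteristic you transport the classification of quadratic forms via the $G$-equivariant polarization isomorphism $Q\mapsto B_Q$, and in even characteristic you pass to the hyperplane $W=\{x:B(x,x)=0\}=\ker\ell$ on which $B$ is alternating and then invoke the symplectic normal form, exactly as the paper indicates (citing \cite[Chapter~8]{Tay}). One small remark: the cosets of $\sigma(\F_q)$ play no role in normalizing the residual $2\times2$ block for symmetric \emph{bilinear} forms in characteristic~$2$---scaling the two basis vectors already brings $\bigl(\begin{smallmatrix}0&c\\c&d\end{smallmatrix}\bigr)$ with $c,d\neq0$ to $\bigl(\begin{smallmatrix}0&1\\1&1\end{smallmatrix}\bigr)$ using only the $\Gl$-action, so that reference (relevant for quadratic forms) can be dropped.
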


Similarly to quadratic forms, we say that a symmetric bilinear
form $B$ is of \emph{type}
$(r,e)$, and write $B\sim (r,e)$,  when $r$ is the rank of $B$, and
$e$ takes the value $e=1$, $0$, $2$ when $B$ is in case (1), (2), (3), respectively. 
If in addition we define the type of $Q=0$, and of $B=0$, to be $0$, 
we obtain a convenient parametrization of the orbits of $\QQ_n$ and of $\SSS_n$
under the action of $G$ by the same set $\TT_n$:
\begin{equation}
\TT_n:=\{0\}\cup \{ (r,e)\ : \ 1\leq r\leq n,\ e\in \{0,1,2\},\ r\equiv
e\bmod 2,\ r\geq e\}.
\end{equation}

When dealing with types, orbits or stabilizers, we will use the same
set of notations regardless of whether we are speaking of quadratic forms or of
symmetric bilinear forms. However, to avoid confusion, we will
preferentially reserve the letter $t$ for types of elements of $\QQ_n$
and the letter $s$ for types of elements of $\SSS_n$.

\subsection{The equations satisfied by $\{X_t, \ t\in
  \TT_n\}$}\label{subsection equations}

Let $\CC\subset \QQ_n$ denote a (not
necessarily linear) subset of $\QQ_n$, such that $\wt(\CC)\geq 3$.
Without loss of generality, we will assume  that $0\in \CC$. We will
be concerned with the unknowns $X_t$, for $t\in \TT_n$:
\begin{equation}\label{inc}
X_t:=|\CC\cap \OO_t|=|\{Q\in \CC \ :\ Q \sim t\}|.
\end{equation}
These numbers satisfy a few trivial equations:
\begin{equation}\label{eq-trivial}
\begin{cases}
X_0=1\\
\sum_{t\in \TT_n} X_t=|\CC|\\
X_t=0 \text{ for } t=(1,1), (2,0),(2,2),(3,1),(4,0).
\end{cases}
\end{equation}
The last set of conditions follow from the assumption $\wt(\CC)\geq
3$, which implies that $X_t=0$ when $t$ is the type of an orbit of
weight $1$ or $2$, and from Lemma \ref{lemma weight to type}. We will
prove in the next proposition that, if $\CC$ is
a hypothetical optimal code, i.e. if it satisfies equality in
\eqref{bound}, then some additional equations hold for
$\{X_t\}$. First we need to introduce another notation. 

We make the remark that, for $s\in \TT_n$, the expression 
$\sum_{B\in \OO_s} \chi_B(Q)=\sum_{B\in \OO_s} (Q,B)$ only depends on  the type of
$Q$. Indeed, this property follows immediately from \eqref{pairing
  action}. This remark justifies the following notation:
\begin{equation}\label{def chis}
\chi_s(t):= \sum_{B\in \OO_s} (Q,B) \qquad (Q\sim t).
\end{equation}

\begin{proposition}\label{prop extra eq}
Let $\CC\subset \QQ_n$,  containing  $0$, such that $\wt(\CC)\geq
3$  and $|\CC|=q^{(n-1)(n-2)/2}$. For all $s=(r,e)$ such that $1\leq r\leq n-2$, we have
\begin{equation}\label{eq not used}
\sum_{t\in \TT_n} \chi_{s}(t) X_t=0.
\end{equation}
\end{proposition}
 
\begin{proof}
We consider the function on $\QQ_n$ defined by:
\begin{equation}
F(Q):=\sum_{g\in G} \1_{\A^g}(Q)
\end{equation}
where $\A:=\A_{x_1,x_2}$. We note that $\A^{g}$ runs over the set
$\{\A_{\ell_1,\ell_2}\ :\  (\ell_1,\ell_2) \text{ independent}\}$. 
We will compute in two different ways the expression:
\begin{equation}\label{sum}
\Sigma:=\sum_{(Q,Q')\in \CC^2} F(Q-Q').
\end{equation}

The first way is straightforward; if $Q\neq Q'$, the weight of $Q-Q'$
is at least equal to $3$, consequently  $Q-Q'$ cannot
belong to $\A^g$, so 
\begin{equation}\label{first}
\Sigma=\sum_{Q\in \CC} F(0)= |G| |\CC|.
\end{equation}

The second method uses the expansion of $F$ over the characters of
$\QQ_n$, in other words its  Fourier expansion. We introduce the complex vector space
\begin{equation*}
L(\QQ_n):=\{f\ :\ \QQ_n\to \C\}
\end{equation*}
The
multiplicative characters $\chi_B:(\QQ_n,+)\to \C^*$, for $B\in \SSS_n$,  form a basis of
$L(\QQ_n)$, which is orthonormal for the standard inner product
\begin{equation*}
\langle f_1,f_2\rangle:=\frac{1}{|\QQ_n|}\sum_{Q\in \QQ_n} f_1(Q)\overline{f_2(Q)}
\qquad f_1,f_2\in L(\QQ_n).
\end{equation*}

So, $F$ can be written  $F=\sum_{B\in \SSS_n} f_{B} \chi_B$. It will be
essential in what follows that $f_B\geq 0$, and we will also need to
know when $f_B>0$. The next lemma clarifies this.

\begin{lemma}\label{lemma fB}
With the above notation, $f_B\geq 0$ for all $B\in \SSS_n$, and
$f_B>0$ if and only if $\rk(B)\leq n-2$.  Moreover,
$f_0=|G||\A|/|\QQ_n|$.
\end{lemma}

\begin{proof} We have
\begin{align*}
f_B=\langle F, \chi_B\rangle &=\frac{1}{|\QQ_n|}\sum_{Q\in \QQ_n}
F(Q) \overline{\chi_{B}(Q)}\\
&=\frac{1}{|\QQ_n|}\sum_{Q\in \QQ_n}\Big(\sum_{g\in G}
\1_{\A^g}(Q) \overline{\chi_{B}(Q)}\Big)\\
&= \frac{1}{|\QQ_n|}\sum_{g\in G}\Big( \sum_{Q\in
  \A^g}\overline{\chi_{B}(Q)}\Big).
\end{align*}
Let $\A^{\perp}=\{B\in \SSS_n\ :\ (Q,B)=1\text{ for all }Q\in
\A\}$. Then, from the property \eqref{pairing action}  of the pairing,
$(\A^g)^{\perp}=(\A^{\perp})^{g^{-t}}$ (where $g^{-t}$ denotes the
transpose of the inverse of $g$), and, because $\A^g$ is a subgroup of
$\QQ_n$,
\begin{equation*}
\sum_{Q\in \A^g} \chi_B(Q)=\left\{
\begin{array}{ll}
0 & \text{if }B\notin (\A^\perp)^{g^{-t}}\\
|\A^g|=|\A| & \text{if }B\in (\A^\perp)^{g^{-t}}.
\end{array}
\right.
\end{equation*}
So, we find
\begin{equation}\label{equation fB}
f_B=\frac{|\A|}{|\QQ_n|}|\{g\in G\ :\ B^{g^t}\in \A^{\perp}\}|.
\end{equation}
From \eqref{equation fB}, it is clear that $f_B\geq 0$. Moreover, we see that
$f_B>0$ if and only if the orbit of $B$ intersects $\A^{\perp}$. Going
back to the definitions of $\A$ \eqref{eq A} and of the pairing \eqref{def pairing}, we see that
$\A^{\perp} =\{ B'\in \SSS_n \ :\ B'_{1,j}=B'_{2,j}=0\text{ for all }
1\leq j\leq n\}$. So, the orbit of
$B$ intersects $\A^{\perp}$ if and only if $\rk(B)\leq n-2$. The
expression for $f_0$ follows from \eqref{equation fB}.
\end{proof}

Going back to $\Sigma$ in \eqref{sum}, we have:
\begin{equation}\label{second}
\Sigma =\sum_{B\in \SSS_n} f_{B} \big(\sum_{(Q,Q')\in \CC^2} \chi_B(Q-Q')\big)
= \sum_{B\in \SSS_n} f_{B} \big\vert\sum_{Q\in \CC} \chi_B(Q)\big\vert^2
\geq f_0 |\CC|^2
\end{equation}
where, in the last inequality, we have neglected the contributions of
all the characters except that of the trivial one; the non negativity
of the coefficients $f_{B}$ is crucial in this step.

Now we recap our findings: from \eqref{first}, \eqref{second} and
Lemma \ref{lemma fB},

\begin{equation*}
|G||\CC|=\Sigma\geq \frac{|G||\A|}{|\QQ_n|}|\CC|^2.
\end{equation*}
So, we obtain after simplification the inequality $1\geq
(|\A|/|\QQ_n|)|\CC|$; this inequality is nothing more than \eqref{bound}. The
interesting point here is that  the equality $1=
(|\A|/|\QQ_n|)|\CC|$ holds if and only if 
the neglected terms in \eqref{second} are equal to zero, leading, in the
case of a hypothetical optimal code, to the conditions
\begin{equation}\label{eq new}
\sum_{Q\in \CC} \chi_B(Q)=0 \qquad \text{ if } f_{B}>0,\ B\neq 0.
\end{equation}

From Lemma \ref{lemma fB}, we know that $f_B>0$ if and only if
$\rk(B)\leq n-2$. Let $s=(r,e)$ where $1\leq r\leq n-2$; 
observing that the condition  $f_B>0$ holds
simultaneously for all the elements of the orbit $\OO_s$ allows us to sum up the 
equations \eqref{eq new} over $B\in \OO_{s}$. The expression 
$\sum_{B\in \OO_s} \chi_B(Q)=\sum_{B\in \OO_s} (Q,B)$ only depends on
the type of $Q$, since
\begin{equation*}
\sum_{B\in \OO_s} (Q^g,B) = \sum_{B\in \OO_s} (Q,B^{g^t})= \sum_{B'\in \OO_s} (Q,B'),
\end{equation*}
so the definition \eqref{def chis} of $\chi_s(t)$ is consistent and we
obtain that
\begin{equation*}
\sum_{t\in \TT_n} \chi_s(t)X_t=\sum_{B\in \OO_s} \big(\sum_{Q\in \CC} \chi_B(Q)\big)=0.
\end{equation*}
\end{proof}

\subsection{A change of variables}\label{subsection change}

Let us recapitulate what we have achieved by now:

\begin{proposition}\label{prop:primal system}
Suppose $\CC\subset \QQ_n$ is such that $0\in \CC$, 
$|\CC|=q^{(n-1)(n-2)/2}$ and $\wt(\CC)\geq~3$. Then,
\begin{equation*}
X_t:= |\CC \cap \OO_t| \quad (t\in \TT_n)
\end{equation*}
satisfy the following equations:

\begin{equation}\label{eq all}
\left\{
\begin{array}{ll}
(i) &X_0=1\\
(ii) &X_t=0 \quad \text{ for }  t=(1,1), (2,0),(2,2),(3,1),(4,0)\\
(iii) &\sum_{t\in \TT_n} X_t=q^{(n-1)(n-2)/2}\\
(iv) &\sum_{t\in \TT_n} \chi_s(t) X_t=0 \quad \text{ for all } s=(r,e),\ 1\leq r\leq n-2
\end{array}
\right.
\end{equation}
\end{proposition}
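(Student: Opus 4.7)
The statement is essentially a packaging of four facts, so my plan is to verify the four items in turn, relying almost entirely on previously established results.

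For item (i), the hypothesis $0\in\CC$ combined with the observation that the zero form is the unique element of the orbit $\OO_0$ (the orbit corresponding to type $0$) gives $X_0=|\CC\cap\OO_0|=1$ immediately.

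For item (ii), I would invoke Lemma~\ref{lemma weight to type}, which tells us that a form of type $(r,e)$ has weight $(r-e+1)/2$. Evaluating this on each listed type gives weights $1,1,2,2,2$ for $(1,0),(2,1),(2,-1),(3,0),(4,1)$ respectively, all strictly smaller than $3$. Since the assumption $\wt(\CC)\geq 3$ forbids any \emph{nonzero} difference of codewords having weight less than $3$, and in particular (as $0\in\CC$) forbids any nonzero codeword of weight less than $3$, we obtain $X_t=0$ for every such type. Note that type $0$ is not among those listed, so this does not contradict (i).

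Item (iii) is the partition identity: the orbits $\{\OO_t\}_{t\in\TT_n}$ partition $\QQ_n$ (by Propositions~\ref{proposition equivalence Q} and the very definition of the orbit decomposition), hence the sets $\CC\cap\OO_t$ partition $\CC$, and summing their sizes recovers $|\CC|=q^{(n-1)(n-2)/2}$ by hypothesis.

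Finally, item (iv) is a verbatim restatement of equation~\eqref{eq not used} in Proposition~\ref{prop extra eq}, whose proof has already been carried out in full via the Delsarte linear programming argument on the characters $\chi_B$ of $(\QQ_n,+)$: the key ingredients being the nonnegativity of the Fourier coefficients $f_B$ and the fact that $f_B>0$ exactly for $\rk(B)\leq n-2$ (Lemma~\ref{lemma fB}), together with the observation that equality in the anticode bound \eqref{bound} forces all nontrivial Fourier terms with $f_B>0$ to vanish on $\CC$. Since all the real work is done inside that proposition, the present proposition requires no new argument — there is no genuine obstacle, only an assembly of the pieces already in place.
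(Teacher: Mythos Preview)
Your proposal is correct and matches the paper's treatment exactly: the proposition is introduced in the paper with the sentence ``Let us recapitulate what we have achieved by now,'' and no separate proof is given because items (i)--(iii) are the trivial equations~\eqref{eq-trivial} and item (iv) is Proposition~\ref{prop extra eq}. Your verification of each item, including the use of Lemma~\ref{lemma weight to type} for (ii) and the observation that $0\in\CC$ makes the minimum-weight condition apply to codewords themselves, is precisely the intended reading.
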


\eqref{eq all} is a linear system of equations with $|\TT_n|$ unknowns (the variables $X_t$)
and $|\TT_n|+3$ equations. So, if we are reasonably lucky, this linear
system has no solutions, which in turn proves the non existence of the code $\CC$. Unfortunately, only a few of the unknowns are equal to zero so we cannot easily reduce the size of this system, and moreover,  we so far have no closed expression for the coefficients $\chi_s(t)$.
In order to overcome these issues, we will introduce a change of variables, whose effect will be to `exchange' 
variables and equations, the end result yielding a very small linear system.

\begin{proposition}\label{prop eq all dual}
Suppose $\CC\subset \QQ_n$ is such that $0\in \CC$, 
$|\CC|=q^{(n-1)(n-2)/2}$ and $\wt(\CC)\geq~3$. Let
\begin{equation*}
X_t:=|\CC \cap \OO_t| \quad (t\in \TT_n)
\end{equation*}
and let 
\begin{equation*}
Y_s:= \frac{1}{|\CC|}\sum_{t\in \TT_n} \chi_s(t)X_t \quad (s\in \TT_n).
\end{equation*}
The numbers $Y_s$  satisfy the following equations:
\begin{equation}\label{eq all dual}
\left\{
\begin{array}{ll}
(i') & Y_0=1\\
(ii') &  Y_s=0 \quad \text{ for all } s=(r,e),\ 1\leq r\leq n-2\\
(iii')& \sum_{s\in \TT_n} Y_s=q^{2n-1}\\
(iv') &\sum_{s\in \TT_n} \frac{\chi_s(t)}{|\OO_s|} Y_s=0 \quad  t=(1,1), (2,0),(2,2),(3,1),(4,0)\\
\end{array}
\right.
\end{equation}
 \end{proposition}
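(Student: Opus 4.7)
My plan is to verify each of (i')--(iv') directly, using only character orthogonality on the additive groups $(\QQ_n, +)$ and $(\SSS_n, +)$ and the non-degeneracy of the pairing of Lemma~\ref{lemma pairing}. Identities (i') and (ii') are essentially immediate: since $\OO_0 = \{0\}$, the function $\chi_0$ is identically equal to $1$ on $\TT_n$, so $Y_0 = |\CC|^{-1}\sum_t X_t = 1$ by (iii) of the primal system; and (ii') is literally equation (iv) of Proposition~\ref{prop:primal system} after dividing through by $|\CC|$.

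For (iii'), I would swap the order of summation and collapse:
\[
\sum_{s \in \TT_n} Y_s \;=\; \frac{1}{|\CC|}\sum_{t \in \TT_n} X_t \sum_{s \in \TT_n}\chi_s(t) \;=\; \frac{1}{|\CC|}\sum_{t \in \TT_n} X_t \sum_{B \in \SSS_n}(Q_t, B).
\]
Non-degeneracy of the pairing makes $B \mapsto (Q, B)$ a non-trivial additive character of $\SSS_n$ whenever $Q \neq 0$, so $\sum_{B \in \SSS_n}(Q_t, B) = |\SSS_n|\,\delta_{t,0}$; only $t = 0$ contributes and $\sum_s Y_s = |\SSS_n|/|\CC| = q^{n(n+1)/2 - (n-1)(n-2)/2} = q^{2n-1}$.

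The main step is (iv'), which is an instance of MacWilliams-type duality. For each pair $(t,s) \in \TT_n^2$ I introduce the dual zonal sum
\[
\tilde\chi_t(s) := \sum_{Q \in \OO_t}(Q, B) \qquad (B \in \OO_s \ \text{arbitrary}),
\]
which is well-defined by \eqref{pairing action}. Double counting $\sum_{(Q,B) \in \OO_t \times \OO_s}(Q, B)$ immediately yields the reciprocity $|\OO_t|\,\chi_s(t) = |\OO_s|\,\tilde\chi_t(s)$, so that the left-hand side of (iv') rewrites as $|\OO_t|^{-1}\sum_s \tilde\chi_t(s)Y_s$. The key ingredient is then the orthogonality
\[
\sum_{s \in \TT_n}\tilde\chi_t(s)\,\chi_s(t') \;=\; |\SSS_n|\,\delta_{t,t'},
\]
which I would prove by unfolding $\chi_s(t') = \sum_{B \in \OO_s}(Q_{t'}, B)$, gluing the orbit sums into a single sum over $\SSS_n$, and applying character orthogonality on $\SSS_n$ to $\sum_{B \in \SSS_n}(Q+Q_{t'},B) = |\SSS_n|\,\delta_{Q, -Q_{t'}}$. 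The Kronecker $\delta_{t,t'}$ at the end uses $-\OO_t = \OO_t$, valid since $-1 \in \F_q^* \subset G$. Substituting into the definition of $Y_s$ then collapses the left-hand side of (iv') to
\[
\sum_{s \in \TT_n}\frac{\chi_s(t)}{|\OO_s|}\,Y_s \;=\; \frac{|\SSS_n|}{|\OO_t||\CC|}\,X_t,
\]
which vanishes for precisely the five types listed in Proposition~\ref{prop:primal system}(ii). I expect no conceptual obstacle; the only delicate point is keeping the two applications of character orthogonality on $(\SSS_n, +)$ --- the simpler one used for (iii'), and the post-MacWilliams-swap one used for (iv') --- cleanly organised.
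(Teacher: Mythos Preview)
Your argument is correct and is essentially the same as the paper's: both proofs reduce to character orthogonality on $(\SSS_n,+)$ and the reciprocity $|\OO_t|\chi_s(t)=|\OO_s|\tilde\chi_t(s)$, arriving at the identical inversion formula $\sum_s \frac{\chi_s(t)}{|\OO_s|}Y_s=\frac{|\QQ_n|}{|\OO_t||\CC|}X_t$. The only cosmetic difference is that the paper packages the computation through the Fourier coefficients $\lambda_B$ of $\1_\CC$ and the inner product $\langle\1_{\OO_t},\1_\CC\rangle$, whereas you work directly with the orbit-sum matrices; your organisation is marginally more direct but the mathematics is the same.
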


\begin{proof} The new variables $Y_t$ are related to the coefficients $\{\lambda_B,\ B\in \SSS_n\}$
  of the characteristic function $\1_\CC$ of $\CC$ on the basis of
  characters:
\begin{equation}\label{eq 1C}
\1_{\CC}=\sum_{B\in \SSS_n} \lambda_B \chi_B.
\end{equation}
Indeed, we have
\begin{equation}\label{eq lambdaB}
\lambda_B=\langle \1_\CC , \chi_B \rangle=\frac{1}{|\QQ_n|}\sum_{Q\in  \CC}\overline{\chi_B(Q)},
\end{equation}
so 
\begin{equation*}
\sum_{B\in \OO_s} \lambda_B
=\frac{1}{|\QQ_n|}\sum_{Q\in  \CC}\Big(\sum_{B\in \OO_s}
(Q,-B)\Big)=\frac{1}{|\QQ_n|}\sum_{t\in \TT_n} \chi_s(t) X_t
\end{equation*}
and thus
\begin{equation}\label{eq Ys}
Y_s=\frac{|\QQ_n|}{|\CC|}\sum_{B\in \OO_s} \lambda_B.
\end{equation}

Let us verify the equations $(i')-(iv')$. 
From \eqref{eq Ys}, $Y_0=\frac{|\QQ_n|}{|\CC|} \lambda_0$ and from
\eqref{eq lambdaB}, $\lambda_0=\frac {|\CC|}{|\QQ_n|}$
so we find $(i')$. The equations $(ii')$ follow immediately from
$(iv)$. From \eqref{eq lambdaB} and \eqref{eq 1C}, we obtain $(iii')$:
\begin{equation*}
\sum_{s\in \TT_n} Y_s= \frac{|\QQ_n|}{|\CC|}\sum_{B\in \SSS_n}
\lambda_B=\frac{|\QQ_n|}{|\CC|}\1_\CC(0)=q^{2n-1}.
\end{equation*}

It remains to prove $(iv)'$. To this end, we introduce the
characteristic function $\1_{\OO_t}$ of the orbit $\OO_t$ of quadratic
forms of type $t$ and its decomposition as a linear combination of
characters:
\begin{equation*}
\1_{\OO_t}=\sum_{B\in \SSS_n} \mu_{t,B}\chi_B
\end{equation*}
where
\begin{equation*}
\mu_{t,B}=\langle \1_{\OO_t},\chi_B\rangle=\frac{1}{|\QQ_n|}\sum_{Q\in
  \OO_t} \overline{\chi_B(Q)} = \frac{1}{|\QQ_n|}\sum_{Q\in\OO_t}\chi_B(Q).
\end{equation*}
The last equality holds because for any $Q\in\OO_t$ we have
$-Q\in\OO_t$ and $\chi_B(-Q)=(-Q,B)=\overline{(Q,B)}$ by definition
\eqref{def pairing} of $(Q,B)$.
Now the expression $\sum_{Q\in  \OO_t} \chi_B(Q)$ only depends
on the type $s$ of $B$, and 
\begin{align*}
|\OO_s|\big(\sum_{Q\in  \OO_t}\chi_B(Q)\big)&=
\sum_{B\in \OO_s}\big(\sum_{Q\in  \OO_t} \chi_B(Q)\big)\\
&=\sum_{Q\in  \OO_t} \big(\sum_{B\in \OO_s}\chi_B(Q)\big)
=\sum_{Q\in  \OO_t} \chi_s(t)=|\OO_t| \chi_s(t)
\end{align*}
so
\begin{equation}\label{eq mutB}
\mu_{t,B}=\frac{|\OO_t|\chi_s(t)}{|\QQ_n||\OO_s|}.
\end{equation}
Then,
\begin{equation*}
X_t=|\QQ_n|\langle \1_{\OO_t},\1_\CC \rangle=|\QQ_n|\sum_{B\in \SSS_n}
\mu_{t,B} \lambda_B.
\end{equation*}
Taking account of \eqref{eq mutB} and of \eqref{eq Ys}, we find
\begin{align*}
X_t&=|\OO_t| \sum_{B\in \SSS_n} \frac{\chi_s(t)}{|\OO_s|} \lambda_B 
=|\OO_t| \sum_{s\in \TT_n} \frac{\chi_s(t)}{|\OO_s|}\big(\sum_{B\in \OO_s} \lambda_B \big)\\
&=\frac{|\OO_t| |\CC|}{|\QQ_n|} \sum_{s\in \TT_n}
\frac{\chi_s(t)}{|\OO_s|} Y_s.
\end{align*}
The above relation between $X_t$ and $Y_s$ shows that $(ii)$ is
equivalent to $(iv')$.
\end{proof}

If the code $\CC$ is a linear subspace of $\QQ_n$, the numbers $Y_s$ have a combinatorial
interpretation in terms of the dual code 
\begin{equation*}
\CC^{\perp}=\{B\in \SSS_n\
:\ (Q,B)=1\text{ for all }Q\in \CC\}.
\end{equation*}
Indeed, it follows from the Poisson summation formula that
$Y_s=|\CC^\perp \cap \OO_s|$. Moreover, 
it now follows from Proposition \ref{prop eq all dual} that we have a remarkable
connection between linear codes of quadratic forms endowed with the
weight distance, and linear codes of symmetric bilinear forms endowed
with the rank distance, namely:
\begin{corollary}\label{cor:linear}
  If $\CC$ is a linear subspace of $\QQ_n$ 
of dimension $(n-1)(n-2)/2$ and minimum weight $3$, then $\CC^{\perp}$ is a linear subspace of $\SSS_n$ 
of dimension $2n-1$ and minimum rank $n-1$. 
\end{corollary}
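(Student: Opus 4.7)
The plan is to deduce this corollary directly from Proposition~\ref{prop eq all dual}, once we interpret the variables $Y_s$ combinatorially when the code is linear.

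First I would establish the dimension. The pairing between $\QQ_n$ and $\SSS_n$ introduced in Lemma~\ref{lemma pairing} is non-degenerate, and both spaces have dimension $n(n+1)/2$. Since $\CC^\perp$ is the annihilator of the linear subspace $\CC$ under this pairing,
\[
\dim(\CC^\perp)=\frac{n(n+1)}{2}-\frac{(n-1)(n-2)}{2}=2n-1.
\]

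For the minimum rank, the key step is to show that when $\CC$ is linear the numbers $Y_s$ defined in Proposition~\ref{prop eq all dual} satisfy
\[
Y_s=|\CC^\perp\cap\OO_s|.
\]
This is a Poisson summation computation starting from the identity $\lambda_B=\frac{1}{|\QQ_n|}\sum_{Q\in\CC}\overline{\chi_B(Q)}$ recorded in \eqref{eq lambdaB}. Standard character orthogonality applied to the finite abelian subgroup $\CC$ of $(\QQ_n,+)$ shows that $\lambda_B=|\CC|/|\QQ_n|$ when $B\in\CC^\perp$, since then $\chi_B$ is trivial on $\CC$, and $\lambda_B=0$ otherwise. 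Substituting into the definition $Y_s=(|\QQ_n|/|\CC|)\sum_{B\in\OO_s}\lambda_B$ gives the claimed formula.

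Since the hypotheses $0\in\CC$, $\wt(\CC)\geq 3$ and $|\CC|=q^{(n-1)(n-2)/2}$ are met, Proposition~\ref{prop eq all dual} applies. In particular equation $(ii')$ asserts $Y_s=0$ for every type $s=(r,e)$ with $1\leq r\leq n-2$. Combined with $Y_s=|\CC^\perp\cap\OO_s|$, this forces $\CC^\perp$ to contain no non-zero symmetric bilinear form of rank between $1$ and $n-2$, so its minimum rank is at least $n-1$. There is no real obstacle beyond the Poisson identification, which is essentially already implicit in the proof of Proposition~\ref{prop eq all dual}; the corollary is really a matter of recognising that in the linear case the linear-programming variables $Y_s$ have a direct combinatorial meaning as the orbit-wise weight enumerator of the dual code.
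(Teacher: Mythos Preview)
Your proof is correct and follows exactly the paper's approach: identify $Y_s=|\CC^\perp\cap\OO_s|$ via Poisson summation (the computation of $\lambda_B$ you give), then read off the minimum rank from equation $(ii')$ of Proposition~\ref{prop eq all dual}, with the dimension coming from non-degeneracy of the pairing. The paper states the Poisson identity without writing out the character-orthogonality step, so your version is if anything slightly more explicit.
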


Codes of symmetric bilinear forms for the rank distance have been studied in \cite{Sch1} and \cite{Sch2}.
In particular, the tight upper bound of $q^{n+1}$ is proved for a linear code of minimum rank at least $n-1$,
if $q$ is even and $n$ is even (\cite{Sch1}), and  if $q$ is odd
(\cite{Sch2}). Noting that $n+1<2n-1$ if and only if $n\geq 3$, we
have therefore that Corollary~\ref{cor:linear} together with
\cite{Sch2} proves Theorem~\ref{thm:sidon} for $q$ odd and hence
Theorem~\ref{thm:main} for $q$ odd. For even $q$, the results of
\cite{Sch1} together with Corollary~\ref{cor:linear} yield
Theorem~\ref{thm:sidon} with the additional hypothesis that $\dim(A)$
is even. If we remark that Lemma~\ref{lem:dim(A)even} proves that
under the hypothesis of Theorem~\ref{thm:main} the
$2$-atom $A$ cannot have odd dimension, then we obtain a proof of
Theorem~\ref{thm:main} for finite fields with $q$ even. 

In the next section we shall nevertheless give a self-contained proof
that the system \eqref{eq all dual} has no solutions over the real
numbers.
This will give us a complete proof of Theorem~\ref{thm:sidon} which is
of independent interest besides being a stepping stone towards 
Theorem~\ref{thm:main}.

\subsection{Solving the linear system satisfied by $\{Y_s,\ s\in
  \TT_n\}$}

The linear system \eqref{eq all dual} is much more friendly than
\eqref{eq all} because it can be re-written as a linear system in
only three variables (either
$\{Y_{(n-1,1)}, Y_{(n,0)}, Y_{(n,2)}\}$ or $\{Y_{(n-1,0)},
Y_{(n-1,2)}, Y_{(n,1)}\}$, depending on the parity of $n$) and six equations. 
In order to prove that this linear system doesn't have any solutions
when $n\geq 3$, it will be enough to take account of the equations
$(iv')$ associated to the types $t=(1,1),
(2,0),(2,2)$, and thus to compute the values of $\chi_s(t)$ for
$s=(n,e), (n-1,e)$ and $t=(1,1),
(2,0),(2,2)$. These numbers are the so-called \emph{$P$-numbers}  of the association scheme 
defined by the action of $G$ on $\QQ_n$ (\cite{BI}) and they could be derived from \cite{FWMM} and \cite{Sch2}.
In order to keep this paper self-contained, we offer a direct  computation in the appendix. We obtain the following systems of equations:

\begin{lemma}\label{lemma system} With the notation of Proposition \ref{prop eq all dual}, let $Y=(Y_{(n-1,1)}, Y_{(n,0)}, Y_{(n,2)})$ if $n$ is even,
  and $Y=(Y_{(n-1,0)}, Y_{(n-1,2)}, Y_{(n,1)})$ if $n$ is odd, 
then
\begin{equation*}
M Y^t=(q^{2n-1}-1, -1,-1,-1)^t
\end{equation*}
where:
\begin{enumerate}
\item If $n$ is even and $p=2$,

\begin{equation*}
M=\left[
\begin{array}{ccc}
1 & 1 & 1\\
\frac{-1}{q^n-1} & 1 & \frac{-1}{q^n-1} \\
\frac{q^n-2q^{n-1}+1}{(q^n-1)(q^{n-1}-1)} & \frac{-1}{q^{n-1}-1}
&\frac{-1}{q^n-1}\\
\frac{-1}{q^{n-1}-1} &\frac{-1}{q^{n-1}-1} &
\frac{q^{n-1}+1}{(q^n-1)(q^{n-1}-1)}
\end{array}
\right]
\end{equation*}

\item If $n$ is odd and $p=2$,

\begin{equation*}
M=\left[
\begin{array}{ccc}
1 & 1 & 1\\
1& \frac{-1}{q^n-1} & \frac{-1}{q^n-1} \\
\frac{-1}{q^n-1} & \frac{q^n-2q^{n-1}+1}{(q^n-1)(q^{n-1}-1)} & \frac{-1}{q^{n}-1}
\\
\frac{-1}{q^{n}-1} & \frac{q^{n}+1}{(q^n-1)(q^{n-1}-1)} & \frac{-1}{q^{n}-1} 
\end{array}
\right]
\end{equation*}

\item If $n$ is even and $p\neq 2$,
\begin{equation*}
M=\left[
\begin{array}{ccc}
1 & 1 & 1\\
\frac{-1}{q^n-1} & \frac{q^{n/2}-1}{q^n-1} & \frac{-q^{n/2}-1}{q^n-1}
\\

\frac{q^n-2q^{n-1}+1}{(q^n-1)(q^{n-1}-1)} & \frac{-q^{n-1}-q^{n/2-1}(q-1)+1}{(q^n-1)(q^{n-1}-1)}
&\frac{-q^{n-1}+q^{n/2-1}(q-1)+1}{(q^n-1)(q^{n-1}-1)}\\
\frac{-1}{q^{n-1}-1} &
\frac{q^{n-1}-q^{n/2-1}(q+1)+1}{(q^n-1)(q^{n-1}-1)}
&\frac{q^{n-1}+q^{n/2-1}(q+1)+1}{(q^n-1)(q^{n-1}-1)}\\
\end{array}
\right]
\end{equation*}

\item If $n$ is odd and $p\neq 2$,
\begin{equation*}
M=\left[
\begin{array}{ccc}
1&1&1\\
\frac{q^{(n+1)/2}-1}{q^n-1} &\frac{-q^{(n+1)/2}-1}{q^n-1}
&\frac{-1}{q^n-1} \\
\frac{q^n-2q^{n-1}+1-q^{(n-1)/2}(q-1)}{(q^n-1)(q^{n-1}-1)} & 
\frac{q^n-2q^{n-1}+1+q^{(n-1)/2}(q-1)}{(q^n-1)(q^{n-1}-1)} & 
\frac{-1}{q^n-1} \\
\frac{q^n+1-q^{(n-1)/2}(q+1)}{(q^n-1)(q^{n-1}-1)} & 
\frac{q^n+1+q^{(n-1)/2}(q-1)}{(q^n-1)(q^{n-1}-1)} & 
\frac{-1}{q^n-1} 
\end{array}
\right]
\end{equation*}

\end{enumerate}
\end{lemma}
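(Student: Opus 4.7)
The plan is to reduce \eqref{eq all dual} to the stated $4\times 3$ linear system. By equation $(ii')$, $Y_s=0$ for every $s$ of rank between $1$ and $n-2$, and by $(i')$, $Y_0=1$. This leaves exactly three unknowns among the $Y_s$, namely those indexed by types of rank $n-1$ or $n$, which is the triple listed (depending on the parity of $n$). The first row of $M$ comes from $(iii')$ by transferring $Y_0=1$ to the right-hand side, giving the $q^{2n-1}-1$ entry. The remaining three rows come from $(iv')$ applied to $t\in\{(1,0),(2,1),(2,-1)\}$; for each such $t$, the contribution of $Y_0$ is $\chi_0(t)/|\OO_0|=1$, which when moved to the right-hand side produces the $-1$ entries. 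It therefore suffices to compute $\chi_s(t)/|\OO_s|$ for $s$ of rank $n-1$ or $n$ and $t\in\{(1,0),(2,1),(2,-1)\}$.

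These entries are the $P$-numbers of the $G$-scheme on $\QQ_n$ for the relations in question. I would compute them directly from the pairing~\eqref{def pairing}. Fix a canonical representative $Q_0$ of type $t$ from Proposition~\ref{proposition equivalence Q}; since $Q_0$ is supported on at most two variables, $(Q_0,B)$ depends only on a few entries of $B$. Consequently $\chi_s(t)=\sum_{B\in\OO_s}(Q_0,B)$ reduces to counting, weighted by an additive character, those symmetric bilinear forms in the orbit $\OO_s$ whose corresponding entries satisfy a prescribed relation. The $G$-invariance of the pairing allows one to work with a single fixed $Q_0$ per type without loss of generality.

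The core analytic input is the quadratic Gauss sum $\sum_{x\in\F_q}\alpha(\lambda x^2)$, which evaluates to $\eta(\lambda) G$ where $\eta$ is the quadratic character and $|G|=q^{1/2}$, together with its characteristic-$2$ analogue based on the Arf invariant. Paired with the orbit sizes $|\OO_s|$ (which carry the expected factors $q^{n/2}\pm 1$ reflecting the underlying quadric on a hyperplane), these Gauss sums produce the exact entries displayed in the matrices. The sign $e$ of the type of $s$ selects which of the two possible Gauss-sum values is realized, which explains the asymmetric $\pm q^{n/2-1}(q\pm 1)$ terms in cases (3) and (4).

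The principal obstacle is bookkeeping through the case split on the parity of both $p$ and $n$. In characteristic $2$ the map $Q\mapsto B_Q$ is not bijective, so quadratic forms and symmetric bilinear forms have to be treated via genuinely different Gauss sums, which is what produces the rather different shape of the matrix entries in cases (1)--(2) versus (3)--(4). I would carry out these calculations in an appendix, organized by the four cases, and cross-check a subset of the entries against the $P$-number tables of \cite{FWMM,Sch2}.
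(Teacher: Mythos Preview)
Your reduction from the system \eqref{eq all dual} to a $4\times 3$ system is correct and is exactly what the paper does: equations $(i')$ and $(ii')$ kill all $Y_s$ except the three of rank $n-1,n$; row one of $M$ is $(iii')$ with $Y_0=1$ moved to the right; and the last three rows are $(iv')$ for $t\in\{(1,0),(2,1),(2,-1)\}$ with the $Y_0$-contribution $\chi_0(t)/|\OO_0|=1$ moved across. The identification of the matrix entries with $\chi_s(t)/|\OO_s|$ is also right.

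Where you diverge from the paper is in how you propose to evaluate these entries. The paper does \emph{not} use quadratic Gauss sums. Instead, in the appendix it rewrites $\chi_s(t)$ as a sum over $G=\F_q^*\times\Gl(n,q)$ acting on a fixed $B_s$, and performs the $\F_q^*$-sum first: $\sum_{a\in\F_q^*}\alpha(a\cdot b(Q_t,B_s^u))$ equals $q-1$ or $-1$ according as $b(Q_t,B_s^u)$ vanishes or not. This converts the character sum into the purely combinatorial quantity $N_{s,t}=|\{u\in\Gl(n,q):b(Q_t,B_s^u)=0\}|$, which is then evaluated by recognising $b(Q_t,B_s^u)$ as a quadratic form in one or two column vectors of $u$ and invoking the standard zero-count formulas $Z_{n,t'}$ (Lemma~\ref{prop NQ}). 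No Gauss sums enter; only linear character orthogonality and quadric point counts.

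Your Gauss-sum route is a legitimate alternative and is in the spirit of \cite{FWMM,Sch2}, but your sketch is less concrete than the paper's calculation: summing $(Q_0,B)$ directly over $B\in\OO_s$ requires an explicit parametrisation of the orbit $\OO_s$, which is heavier than the paper's trick of trading the orbit sum for a $\Gl(n,q)$-sum. The paper's approach has the advantage of being entirely self-contained and uniform across characteristics (the $p=2$ vs.\ $p\neq 2$ split enters only through the type $t'$ of $x^tB_sx$, not through a separate Arf-invariant analysis). A small notational point: writing $|G|=q^{1/2}$ for the Gauss sum clashes with the paper's use of $G$ for the group $\F_q^*\times\Gl(n,q)$.
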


The following proposition will conclude the proof of Theorem
\ref{Theorem main}.

\begin{proposition}\label{prop no sol}
The linear systems defined in Lemma \ref{lemma system} have no
solutions if $n\geq 3$. 
\end{proposition}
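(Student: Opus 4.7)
The plan is to show that in each of the four cases of Lemma \ref{lemma system}, the overdetermined linear system $MY^t = b$ — with $Y$ a triple of unknowns, $M$ a $4\times 3$ matrix, and $b = (q^{2n-1}-1,-1,-1,-1)^t$ — is inconsistent over $\mathbb{R}$. This is equivalent to showing that the augmented $4\times 4$ matrix $[M \mid b]$ has non-zero determinant, since a non-vanishing $\det[M\mid b]$ forces $\operatorname{rank}[M\mid b] = 4 > 3 \geq \operatorname{rank}(M)$, placing $b$ outside the column span of $M$.

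To carry out the computation cleanly, I would first clear denominators. In each of the four cases every entry of $M$ is of the form $P(q)/D$ where $D$ divides $(q^n-1)(q^{n-1}-1)$, so multiplying rows $2$, $3$, $4$ of $[M\mid b]$ by $(q^n-1)(q^{n-1}-1)$ produces an integer-polynomial matrix $N$ with $\det N = \bigl((q^n-1)(q^{n-1}-1)\bigr)^3 \det[M\mid b]$. Since $q\geq 2$ and $n\geq 3$ ensure both $q^n-1$ and $q^{n-1}-1$ are non-zero, it is enough to prove $\det N \neq 0$. I would then use the first row of $N$, which is $(1,1,1,q^{2n-1}-1)$, to reduce rows $2$, $3$ and $4$: subtract appropriate polynomial multiples of row 1 to zero out the first column below the top entry, and expand along the first column. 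This leaves a single $3\times 3$ polynomial determinant in $q$ (with $n$ as parameter) to be evaluated.

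Computing and factoring each of the four resulting $3\times 3$ determinants is the heart of the argument. I expect the outcome in each case to factor as a product of monomials in $q$, factors of the form $q^k\pm 1$ for small $k$, and a residual polynomial whose dominant term is of order $q^{O(n)}$ and which can be bounded away from zero by elementary estimates valid for $q\geq 2$ and $n\geq 3$. The main obstacle is purely the bookkeeping: there are four slightly different cases, and the two with $p\neq 2$ additionally involve the Gauss-sum terms $q^{n/2}$ or $q^{(n\pm 1)/2}$, which remain polynomial in $q^{1/2}$ but require careful tracking of signs and half-powers during factoring. Once the four determinants are shown to be non-zero, Proposition \ref{prop no sol} follows, and with it Theorem \ref{Theorem main}, Theorem \ref{thm:sidon}, and finally Theorem \ref{thm:main} via the reductions carried out in Sections \ref{sec:critical} and \ref{sec:sidon}.
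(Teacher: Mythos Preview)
Your plan is correct and is essentially the paper's own brute-force approach: both amount to checking that the $4\times 3$ system is inconsistent by a direct computation. The paper organises the calculation slightly differently---it solves the first three equations for a unique $Y^*$ and then evaluates the left-hand side of the fourth at $Y^*$, obtaining explicit closed forms such as $-1 - 2q^n(q^{n-2}-1)/\bigl((q-1)(q^{n-1}-1)\bigr)$ in case~(1), which visibly differ from $-1$ for $n\geq 3$---but this is equivalent to your augmented-determinant test via Cramer's rule and row reduction. The one thing missing from your proposal is the actual computation: you still need to carry out the four determinant evaluations rather than merely ``expect'' a tractable factoring, and the paper's organisation (solve three, substitute into the fourth) tends to produce cleaner expressions than expanding a $4\times 4$ determinant directly.
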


\begin{proof} By brute force: we solve for the first three equations; it turns out
  that there is a unique solution $Y^*$, and then we evaluate the left
  hand side of the last equation at $Y^*$, and observe that it cannot be
  equal to $-1$ unless $n=1,2$. We skip the details.
\begin{enumerate}
\item If $n$ is even and $p=2$,
\begin{equation*}
Y^*=\Big(\frac{(q^n-1)(q^{n-1}-1)}{q-1}, q^{n-1}-1,
\frac{(q^n-1)(q^n-2q^{n-1}+1)}{q-1}\Big)
\end{equation*}
and the LHS of the last equation evaluated at $Y^*$ is equal to 
\begin{equation*}
-1-2\frac{q^n(q^{n-2}-1)}{(q-1)(q^{n-1}-1)}.
\end{equation*}
\item If $n$ is odd and $p=2$,
\begin{equation*}
Y^*=\Big(q^{n-1}-1, \frac{q(q^{n-1}-1)^2}{q-1}, 
(q^{2n-1}-1)-\frac{(q^{n-1}-1)}{q-1}\Big)
\end{equation*}
and the LHS of the last equation evaluated at $Y^*$ is equal to 
\begin{equation*}
-1+2\frac{q^n(q^{n-1}-1)}{(q-1)(q^{n}-1)}.
\end{equation*}
\item If $n$ is even and $p\neq 2$,
\begin{align*}
Y^*=\Big( \frac{(q^n-1)(q^{n-1}-1)}{q-1}, &
\frac{(q^{2n-1}-1)}{2}+\frac{(q^{n/2}(q-1)-1) (q^{n-1}-1)}{2(q-1)},\\
&\frac{(q^{2n-1}-1)}{2}+\frac{(-q^{n/2}(q-1)-1) (q^{n-1}-1)}{2(q-1)} \Big)
\end{align*}
and the LHS of the last equation evaluated at $Y^*$ is equal to 
\begin{equation*}
-1-2\frac{q^n(q^{n-2}-1)}{(q-1)(q^{n-1}-1)}.
\end{equation*}
\item If $n$ is odd and $p\neq2$,
\begin{align*}
Y^*=\Big( &\frac{(q^{n-1}-1)(q^{(n-1)/2}+1)(q^{(n+1)/2}-1)}{2(q-1)},\\
&\frac{(q^{n-1}-1)(q^{(n-1)/2}-1)(q^{(n+1)/2}-1)}{2(q-1)},\\
&(q^{2n-1}-1)-\frac{(q^{n-1}-1)}{q-1}\Big)
\end{align*}
and the LHS of the last one at $Y^*$ is equal to 
\begin{equation*}
-1+2\frac{q^n(q^{n-1}-1)}{(q-1)(q^{n}-1)}.
\end{equation*}
\end{enumerate}
\end{proof}

\section{Transcendental extensions}\label{sec:transcendental}

We now suppose that the two finite-dimensional subspaces $S$ and $T$ are such that
$\dim_FST =\dim_FS+\dim_FT-1$ but that they live in an infinite
dimensional extension $L/F$, with no element of $L\setminus F$
algebraic over $F$. When $\dimn S,\dimn T\geq 2$, can we conclude that 
$S$ and $T$ have bases in geometric progression as in
Theorem~\ref{thm:main}~?
It is natural to search for guidance in the classical additive
setting: it is much easier to prove
in the set $\Z$ of integers than in $\Z/p\Z$, that if $|S|,|T|\geq 2$ and $|S+T|=|S|+|T|-1$, then $S$ and
$T$ must be arithmetic progressions. This leads us to think that the
case of infinite extensions should be easier than the case of finite
extensions.
This turns out to be true, but only in part. Transcendental extensions
have valuation rings which allow us to transfer the structure of
spaces with small products to sets with small sumsets in totally
ordered abelian groups. This method has limitations however as we
develop below.

We first deal with the case when $F$ is algebraically closed: in this
case the valuation approach is straightforward.

\begin{theorem}\label{thm:closed}
  Let $F$ be an algebraically closed field and let $L/F$ be a non-trivial
  extension of~$F$. Let $S,T$ be subspaces of $L$ such that
  $\dimn S\geq 2,\dimn T\geq 2$ and
  $$\dimn ST=\dimn S+\dimn T-1.$$
  Then $S$ and $T$ have bases of the form
$\{g,ga,\ldots ga^{\dim S-1}\}$ and $\{ g', g'a,\ldots ,g'a^{\dim T
  -1}\}$ for some $g,g',a\in L$.
\end{theorem}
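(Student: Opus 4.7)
The plan is to use a valuation on $L$ to transfer the problem to a sumset problem in a torsion-free abelian group, and then to lift the resulting arithmetic-progression structure back to a geometric progression. Since $F$ is algebraically closed and $L\neq F$, any $t\in L\setminus F$ is transcendental over $F$; extending the $t$-adic valuation on $F(t)$ by a Chevalley--Zorn extension (and using the algebraic closure of $F$ to see that the residue field of the extension is still $F$) produces a non-trivial valuation $v:L^*\to\Gamma$ on $L$ that is trivial on $F$, has a torsion-free totally ordered value group $\Gamma$, and has residue field $F$. A standard filtration argument then gives $\dim_F W=|v(W\setminus\{0\})|$ for every finite-dimensional $F$-subspace $W\subseteq L$.

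Applied to $S$, $T$, and $ST$, combined with the trivial inclusion $v(S)+v(T)\subseteq v(ST)$ and the hypothesis $\dim(ST)=\dim S+\dim T-1$, this dimension formula gives $|v(S)+v(T)|=|v(S)|+|v(T)|-1$ in the torsion-free abelian group $v(L^*)$. The classical Vosper-type theorem in torsion-free abelian groups (reducible to the case of $\Z$ via a generic projection of the finitely-generated subgroup $\langle v(S)\cup v(T)\rangle$) then forces $v(S)$ and $v(T)$ to be arithmetic progressions sharing a common difference~$d$.

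The conclusion then follows by induction on $\dim S$ (with $\dim T\ge 2$ arbitrary). For the base case $\dim S=2$, after rescaling so that $1\in S$ we have $S=\langle 1,s_1\rangle$, already a geometric progression with ratio $a=s_1$; Lemma~\ref{lem:T} then supplies a geometric progression basis for $T$ with the same ratio. For the inductive step $\dim S\ge 3$, set $S'=\{x\in S:v(x)>\min v(S)\}$. Since $S'T\subseteq ST$ and every element of $S'T$ has valuation strictly greater than $\min v(ST)$, a short count gives $\dim(S'T)=\dim S'+\dim T-1$, and the induction hypothesis applied to $(S',T)$ yields GP bases for both with a common ratio $a$ (necessarily with $v(a)=d$). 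After a further rescaling we may assume $T=\langle 1,a,\ldots,a^{\dim T-1}\rangle$ and $S'=h_1\langle 1,a,\ldots,a^{\dim S-2}\rangle$ with $v(h_1)=d$.

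The main obstacle, and the heart of the argument, is the final lifting step from $S'$ to $S$. Pick $g_0\in S$ with $v(g_0)=\min v(S)$, so $S=Fg_0\oplus S'$. The identity $ST=g_0T+S'T$ together with $\dim ST=\dim S+\dim T-1$, $\dim S'T=\dim S'+\dim T-1$, and $\dim g_0T=\dim T$ forces $\dim(g_0T\cap S'T)=\dim T-1$; combined with the fact that $S'T$ has only strictly positive valuations whereas $v(g_0)=0$, this pins $g_0T\cap S'T=g_0\langle a,a^2,\ldots,a^{\dim T-1}\rangle$. Writing $g_0 a=h_1 P$ for a polynomial $P$ in $a$, the nesting of inclusions $Pa^{k-1}\in\langle 1,a,\ldots,a^{\dim S+\dim T-3}\rangle$ for $k=1,\ldots,\dim T-1$ forces $\deg P\le\dim S-1$. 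Substituting back, $S=Fg_0+S'=(h_1/a)\bigl(FP+\langle a,\ldots,a^{\dim S-1}\rangle\bigr)=(h_1/a)\langle 1,a,\ldots,a^{\dim S-1}\rangle$, a geometric progression with the same ratio $a$ as $T$, completing the induction.
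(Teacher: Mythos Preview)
Your argument is correct (modulo a small expository glitch: you write ``$v(h_1)=d$'' and ``$v(g_0)=0$'' as if you had also rescaled $S$, which you do not say explicitly; the argument goes through verbatim with $v(g_0)=m$ and $v(h_1)=m+d$, or after dividing $S$ by $g_0$). Both your proof and the paper's begin from the same observation: since $F$ is algebraically closed, $L$ carries a valuation $v$ trivial on $F$ with residue field $F$, so that $\dim_F W=|v(W\setminus\{0\})|$ for every finite-dimensional $W$.

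From that point on the paper is considerably more economical. It never transfers to a sumset problem, never invokes the additive Vosper/Freiman theorem in torsion-free groups, and never performs your explicit lifting. It uses the valuation for one thing only: if $\dim T>2$, discarding the element of minimal valuation produces a hyperplane $T'\subset T$ with $\dim(ST')<\dim(ST)$, hence (by the linear Cauchy--Davenport inequality, Theorem~\ref{thm:Cauchy}) $\dim(ST')=\dim S+\dim T'-1$. Iterating brings us down to a $2$-dimensional $A\subset T$ with $\dim(SA)=\dim S+1$. Now Lemma~\ref{lem:prog} gives $S$ a geometric-progression basis with ratio $a$, and a single application of Lemma~\ref{lem:T} to the \emph{original} pair $(S,T)$ gives the matching basis for $T$. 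No lifting from $S'$ to $S$ is needed, because Lemmas~\ref{lem:prog} and~\ref{lem:T} already contain exactly that content; you invoke Lemma~\ref{lem:T} only in your base case and then essentially reprove its substance in your inductive step.

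What your route buys is a direct illustration of the transfer principle between multiplicative and additive structure via valuations, and it shows that the additive result in $\Z$ is genuinely equivalent to the linear one over algebraically closed $F$. The paper's route shows that one does not need the additive theorem at all: the valuation serves only to locate the $2$-dimensional witness $A$, after which the purely linear Lemmas of Section~\ref{sec:critical} do the rest.
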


\begin{proof}
 From Section~\ref{sec:critical} it suffices to show that there exists
 a space $A$ of dimension $2$ such that $\dimn SA=\dimn S+1$. The
 result will therefore follow by induction if we can show that when
 $\dimn T>2$, there exists a subspace of $T'$ such that
 $\dimn T'=\dimn T-1$ and $\dimn ST'\leq\dimn S+\dimn T'-1$.
 Without loss of generality we may suppose that $L$ is the subfield
 generated by $S$ and $T$, whence its transcendence degree is finite,
 in other words $L$ is a function field. Since there always exists a
 place of $L$, by composition of places we may choose a place with
 values in an algebraic extension of $F$ which is $F$ because $F$ is
 algebraically closed. This place is equivalent to 
 a valuation ring with residual field equal to $F$ (e.g. Lang
 \cite[Ch. 7]{lang}). We may now choose a basis $(\tau_1,\tau_2,\ldots
 ,\tau_t)$ of $T$ such that $v(\tau_1)>v(\tau_2)>\cdots >v(\tau_t)$
  where $v$ is the valuation function. Setting $T'$ to be the subspace
  generated by $\tau_1,\ldots ,\tau_{t-1}$, we see that elements of
  $ST$ of minimum valuation cannot exist in $ST'$. Therefore
  $\dimn ST'<\dimn ST$ so that $\dimn ST'\leq \dimn S+\dimn T'-1$ and
  we are done.
\end{proof}

The above approach extends to every case when we can guarantee the
existence of a place of $L$ with values in $F$. However when $F$ is
not algebraically closed, it may happen that $L$ has no $F$-valued
places in which case we can only guarantee the existence of 
places with values in an algebraic extension of $F$. In this type of
situation we can hope to obtain a result through the valuation
approach only if we already have a version of Vosper's Theorem for
finite extensions. We illustrate this below by extending
Theorem~\ref{thm:main} to transcendental extensions of finite fields.

\begin{theorem}\label{thm:infinite}
  Let $F$ be the finite field $\F_q$  with $q$ elements and let $L/F$ be an
  infinite extension such that no element of $L\setminus F$ is
  algebraic over $F$. Let $S,T$ be subspaces of $L$ such that
  $\dimn S\geq 2,\dimn T\geq 2$ and
  $$\dimn ST=\dimn S+\dimn T-1.$$
  Then $S$ and $T$ have bases of the form
$\{g,ga,\ldots ga^{\dim S-1}\}$ and $\{ g', g'a,\ldots ,g'a^{\dim T
  -1}\}$ for some $g,g',a\in L$.
\end{theorem}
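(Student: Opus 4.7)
The plan is to mimic the inductive argument of Theorem~\ref{thm:closed}. Once I show that whenever $\dim(T)>2$ there exists a codimension-one subspace $T''\subsetneq T$ with $\dim(ST'')=\dim(S)+\dim(T'')-1$, iterating brings us down to $\dim(T)=2$; then $t_0^{-1}T$ for any non-zero $t_0\in T$ plays the role of the critical $2$-dimensional space $A$, and Lemmas~\ref{lem:prog} and~\ref{lem:T} produce the desired geometric progression bases for both $S$ and $T$. The obstruction compared with Theorem~\ref{thm:closed} is that over $F=\F_q$ one cannot in general find a valuation of $L$ with residue field equal to $F$, so the Gram--Schmidt-style device of ``separating'' basis valuations by $F$-linear combinations is no longer available. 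Instead I exploit a valuation with residue field a prime-degree extension of $F$, and use Theorem~\ref{thm:main} inside that residue field.

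\textbf{Choice of place.} Let $K:=F(S\cup T)\subseteq L$. Since $F$ is algebraically closed in $L$ it is algebraically closed in $K$, so $K$ is a finitely generated function field over $\F_q$ with field of constants $\F_q$. By Bertrand's postulate pick a prime $p\geq \dim(S)+\dim(T)+1$. For $p$ sufficiently large the Lang--Weil point counts on any model of $K$ guarantee the existence of places of residue degree exactly $p$, and the finitely many zero loci of the nonzero elements of $S\cup T\cup ST$ form a proper Zariski-closed subset, so a generic such place $v$ avoids all of them. After rescaling $S$ and $T$ by elements of $L$ so that $v\geq 0$ on $S\cup T$, the reduction $\OO_v\to F':=\F_{q^p}$ then restricts to $F$-linear injections on each of $S$, $T$, and $ST$ simultaneously.

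\textbf{Descending via Theorem~\ref{thm:main}.} The reductions satisfy $\dim_F(\overline S)=s$, $\dim_F(\overline T)=t$, and $\dim_F(\overline S\,\overline T)=s+t-1\leq p-2$, so Theorem~\ref{thm:main} applies in $F'/F$ and produces $\overline g,\overline g',\overline a\in F'$ with $\overline S=\bigoplus_{i=0}^{s-1}F\overline g\,\overline a^i$ and $\overline T=\bigoplus_{j=0}^{t-1}F\overline g'\overline a^j$. Because $p$ is prime and $\overline a\notin F$, the element $\overline a$ has degree exactly $p$ over $F$. Setting $\overline T':=\bigoplus_{j=0}^{t-2}F\overline g'\overline a^j\subset\overline T$ and letting $T''\subset T$ be the codimension-one preimage of $\overline T'$ under the reduction, one computes
\[
\overline{ST''}=\overline S\cdot\overline T'=\overline g\,\overline g'\cdot\langle 1,\overline a,\ldots,\overline a^{s+t-3}\rangle_F,
\]
which has $F$-dimension $s+t-2$ since $s+t-3<p=\deg_F(\overline a)$ forces $1,\overline a,\ldots,\overline a^{s+t-3}$ to be $F$-linearly independent. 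As $\overline{ST}$ has dimension $s+t-1$, we obtain $\overline{ST''}\subsetneq\overline{ST}$ and hence $ST''\subsetneq ST$; combined with the Cauchy--Davenport lower bound of Theorem~\ref{thm:Cauchy} this forces $\dim(ST'')=\dim(S)+\dim(T'')-1$, completing the inductive step.

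\textbf{Main obstacle.} The delicate part is the simultaneous existence of a prime $p\geq\dim(S)+\dim(T)+1$ and of a place of $K$ of residue degree $p$ whose reduction is injective on each of $S$, $T$, and $ST$; this is where Bertrand's postulate, the Lang--Weil estimates and the genericity argument avoiding the finitely many zero loci all intervene. Once such a place is in hand, the rest of the argument is a clean application of Theorem~\ref{thm:main} in the residue field together with the arithmetic observation that a non-constant element of $\F_{q^p}$ with $p$ prime has degree exactly $p$, so no non-trivial $F$-linear relation can occur among its powers of exponent less than $p$.
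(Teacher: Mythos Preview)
Your argument is essentially correct and takes a cleaner route than the paper's. Both proofs reduce to exhibiting, when $\dim(T)>2$, a codimension-one subspace $T''\subset T$ with $\dim(ST'')=\dim(ST)-1$, and both pass through a place of $K=F(S\cup T)$ with residue field contained in a prime-degree extension $\F_{q^p}$ in order to invoke Theorem~\ref{thm:main}. The difference lies in how the reduction is handled. The paper does \emph{not} arrange for the reduction to be injective on $S$, $T$, $ST$; instead it decomposes $T=T_0\oplus T_P$ and $S=S_0\oplus S_P$ according to whether elements have minimal or strictly positive valuation, and then applies the Cauchy--Davenport inequality both in the residue field (to $\pi(S_0)\pi(T_0)$) and in $L$ (to $S_PT_P$) to force equality in each piece separately before invoking Theorem~\ref{thm:main} on $\pi(S_0),\pi(T_0)$. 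You instead exploit the finiteness of $\F_q$ to observe that $S\cup T\cup ST$ is a \emph{finite} set, so by Lang--Weil the place may be chosen generically enough that every nonzero element is a unit and the reduction is injective on each space wholesale; this eliminates the decomposition step entirely. Your route is shorter; the paper's is more robust in that it works for an arbitrary place of the right degree.

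Two small points of exposition. First, you must avoid the \emph{pole} loci as well as the zero loci of the nonzero elements of $S\cup T\cup ST$ (equivalently, work on an affine model on which all these elements are regular), since you need each element to be a $v$-unit, not merely to have non-positive valuation. Second, once this is done the ``rescaling'' step is superfluous and, as written, slightly out of order: the clean version is to normalise $1\in S$ and $1\in T$ \emph{before} choosing the place, so that avoiding the supports of the divisors of all nonzero elements of $S\cup T\cup ST$ already forces $S,T,ST\subset\OO_v^\times\cup\{0\}$ and the reduction is automatically defined and injective. (Bertrand's postulate is also not needed; you simply want some prime $p$ large enough for both the Lang--Weil count and the inequality $p\geq s+t+1$.)
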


\begin{proof}
  As in the proof of Theorem~\ref{thm:closed}, we are done if we can
  exhibit a subspace $T'$ of codimension $1$ in $T$ such that $\dimn
  ST'\leq\dimn S+\dimn T'-1$.

  By the Lang-Weil estimation of the number of rational points of an
  algebraic variety, for any sufficiently large $m$ there exists a
  place of $L$ with values in the finite extension $\F_{q^m}$ of
  $F=\F_q$ \cite[Cor 4]{langweil}. From this we have a valuation 
  $v$ from $L$ into an ordered abelian group, together with a
  valuation ring $\OO$ in $L$ made up of all elements of non-negative
  valuation. If $P$ is the maximal ideal of $\OO$ made up of all elements of
  positive valuation, we have the isomorphism
  $\OO/P\xrightarrow{\sim}\F_{q^m}$. We chose $m$ to be prime and such
  that $m\geq \dimn ST+2$.
  
  Without loss of generality, translating $T$ if need be, we may
  suppose that the minimum valuation of elements of $T$ is $0$. 
  Let $T_P=T\cap P$ and decompose $T$ as $T=T_0\oplus T_P$ where $T_0$
  is any subspace of $T$ in direct sum with $T_P$. Note that since $0$
  is the minimum valuation in $T$, we have that $T_0\neq\{0\}$ and all
  non-zero elements of $T_0$ are of valuation $0$.
  Let $S=S_0+S_P$ be a similar decomposition.

  We clearly have $ST\supset S_{0}T_{0} + S_PT_P$.
  Now let $E$ be a maximal subspace of $S_{0}T_{0}$ all of whose
  non-zero elements have valuation zero. We therefore have
  $$ST\supset E \oplus S_PT_P.$$
  We make the remark that if $s\in S_{0}$ and if $\tau$ is an
  element of $T_P$ of minimum valuation, then the valuation of
  $s\tau$ equals the valuation of $\tau$ and does not belong to the
  set of valuations of the space $E+S_PT_P.$ Therefore
  $s\tau\not\in E+S_PT_P$ and
  \begin{equation}
    \label{eq:+1}
    \dimn ST \geq \dimn E + \dimn S_PT_P +1.
  \end{equation}
  
  The map $\pi : \OO\rightarrow\OO/P$ is injective on
  $E$ and we have
  $$\dimn E=\dimn\pi(E)=\dimn\pi(S_{0})\pi(T_{0}).$$
  By the Cauchy-Davenport inequality (Theorem~\ref{thm:Cauchy})
  applied in the extension $\OO/P$ of $F$, i.e. in $\F_{q^m}/\F_q$, we
  have that $\dimn E\geq \dimn S_{0} + \dimn T_{0} -1$. From 
  the Cauchy-Davenport inequality applied in $L$ we have that
  $\dimn S_PT_P\geq \dimn S_P + \dimn T_P -1.$
Since $\dimn ST =\dimn S+\dimn T-1$ by the hypothesis of the Theorem,
\eqref{eq:+1} implies that both the above inequalities must be equalities
so that we have 
  $$\dimn\pi(S_{0}T_{0}) = \dimn E= \dimn S_{0} +
  \dimn T_{0} -1.$$
Now if $\dimn S_{0}\geq 2$ and $\dimn T_{0}\geq 2$, because we
have chosen the extension $\F_{q^m}$ sufficiently large,
Theorem~\ref{thm:main} applies in $\F_{q^m}/\F_q$ and there exists a
subspace $T'_{0}$ of $T_{0}$ of codimension~$1$ such that
$\dimn \pi(S_{0})\pi(T'_{0}) < \dimn \pi(S_{0})\pi(T_{0})$
and $T'=T'_{0}\oplus T_P$ is the required subspace of $T$ of
codimension $1$. If either $\dimn S_{0}=1$ or  $\dimn T_{0}=1$,
then $T'=T'_{0} + T_P$ for any subspace $T'_{0}$ of
codimension $1$ of $T_{0}$ again yields the required subspace of $T$.
\end{proof}

\section{Concluding Comments}\label{sec:comments}
\begin{itemize}
\item
An alternative proof of Theorem~\ref{thm:closed} follows from a
version of Theorem~\ref{thm:sidon} for algebraically closed fields
$F$. To obtain this, we need the equivalent of Theorem~\ref{Theorem
  main}. We actually have a stricter upper bound. We only deal with
odd characteristic:
\begin{theorem} If $F$ is algebraically closed of odd characteristic, 
  and if $\CC\subset \QQ_n$ is a linear space,
  such that $\wt(\CC)\geq 3$, then $\dim(\CC)\leq n(n+1)/2-(4n-6)$.
\end{theorem}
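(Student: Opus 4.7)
The plan is to exploit the fact that the base field is algebraically closed in order to reduce the weight condition to a rank condition, and then to settle the resulting linear-algebraic problem through a projective dimension count.

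I would first show that under the stated hypotheses, $\wt(Q)=\lceil\rk(Q)/2\rceil$ for every $Q\in\QQ_n$. The upper bound uses the diagonalization $Q\sim x_1^2+\cdots+x_r^2$ (available since $F$ is algebraically closed of odd characteristic) together with the identity $x_i^2+x_j^2=(x_i+\sqrt{-1}\,x_j)(x_i-\sqrt{-1}\,x_j)$, which writes an even-rank form as a sum of $r/2$ products of linear forms and an odd-rank form as a sum of $(r+1)/2$ products. The lower bound follows because any product $\ell\ell'$ of two linear forms has associated symmetric bilinear form of rank at most $2$, so a weight-$k$ form has rank at most $2k$. Consequently, the hypothesis $\wt(\CC)\geq 3$ becomes the purely linear condition: every nonzero $Q\in\CC$ has $\rk(Q)\geq 5$.

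Next, I would identify $\QQ_n$ with the vector space $\SSS_n$ of symmetric $n\times n$ matrices via $Q\leftrightarrow B_Q$ (legitimate because $\mathrm{char}(F)\neq 2$) and let $V\subset\SSS_n$ denote the determinantal subvariety consisting of symmetric matrices of rank at most $4$. A standard orbit-stabilizer computation using the congruence action of $\Gl(n,F)$---over an algebraically closed field of odd characteristic every symmetric matrix of rank $r$ is congruent to $J_r=\mathrm{diag}(I_r,0)$, and the stabilizer of $J_4$ in $\Gl(n,F)$ has dimension $\binom{4}{2}+(n-4)^{2}+4(n-4)$---gives $\dim V=4n-6$, with $V$ irreducible as the closure of a single orbit. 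The hypothesis then translates into $\CC\cap V=\{0\}$; equivalently, the projective varieties $\mathbb{P}(\CC)$ and $\mathbb{P}(V)$ are disjoint in $\mathbb{P}(\QQ_n)=\mathbb{P}^{N-1}$ with $N=n(n+1)/2$.

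Finally, I would apply the projective dimension theorem: over an algebraically closed field, two non-empty irreducible closed subvarieties $X,Y\subset\mathbb{P}^{N-1}$ satisfying $\dim X+\dim Y\geq N-1$ must meet. With $X=\mathbb{P}(\CC)$ of dimension $\dim\CC-1$ and $Y=\mathbb{P}(V)$ of dimension $4n-7$, disjointness forces $(\dim\CC-1)+(4n-7)\leq N-2$, which rearranges to exactly $\dim\CC\leq n(n+1)/2-(4n-6)$. The small cases $n\leq 4$ and $\CC=\{0\}$ are handled trivially, since $(n-3)(n-4)/2\geq 0$. The main obstacle in the argument is the geometric input on the symmetric determinantal variety $V_{\leq 4}$---its irreducibility and its dimension $4n-6$---but this is classical and I would include the short orbit-stabilizer computation sketched above to keep the proof self-contained.
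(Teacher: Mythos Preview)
Your proof is correct and essentially identical to the paper's: both translate $\wt(Q)\geq 3$ into $\rk(Q)\geq 5$ over the algebraically closed field, identify $\QQ_n\cong\SSS_n$, and then apply the projective dimension theorem to the determinantal variety $\RR_4$ of rank~$\leq 4$ symmetric matrices, whose dimension $4n-6$ is computed via the same orbit--stabilizer argument. The only cosmetic difference is that the paper states the intersection inequality in affine form ($\dim\CC+\dim\RR_4\leq\dim\SSS_n$, citing \cite{Har}) rather than in projective form.
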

\begin{proof}
If $F$ is algebraically closed, up to $F^*\times \Gl(n,F)$-equivalence, $x_1^2$ is the only
anisotropic form, from which it follows fairly easily that for a
quadratic form $Q$,
\begin{equation*}
\wt(Q)\geq 3 \Longleftrightarrow \rk(Q)\geq 5.
\end{equation*}
We identify $\QQ_n$ with the space $\SSS_n$ of symmetric matrices over
$F$. We introduce $\RR_4:=\{S\in \SSS_n \ : \ \rk(S)\leq 4\}$, which is
an algebraic variety over $F$ (because the rank condition is equivalent to the
condition that the minors of order $5$ vanish, and the minors are polynomials in
the matrix coefficients). The
assumption $\wt(\CC)\geq 3$ translates to $\CC\cap \RR_4=\{0\}$ which
implies
that $\dim(\CC)+\dim(\RR_4)\leq \dim(\SSS_n)= n(n+1)/2$ (see
\cite{Har}) so it remains to compute $\dim(\RR_4)$. The variety
$\RR_4$ splits into  $5$ orbits under the action of $\Gl(n,F)$, with
representatives of the form 
\begin{equation*}
\left(\begin{array}{c|c} 
S_0 & 0 \\
\hline
 0 &0
\end{array}
\right)
\end{equation*}
where 
\begin{equation*}
S_0\in \left\{ 
(0), (1), 
\left(\begin{array}{cc}
0&1\\1&0
\end{array}
\right),
\left(\begin{array}{ccc}
0&1&0\\1&0&0 \\ 0&0&1
\end{array}
\right),
\left(\begin{array}{cccc}
0&1&0&0\\1&0&0&0 \\ 0&0&0&1\\0&0&1&0
\end{array}
\right)
\right\}.
\end{equation*}

Each of these orbits can be identified with the quotient space
$\Gl(n,F)/O((\begin{smallmatrix} S_0 &0\\0&0\end{smallmatrix}))$ where 
$O((\begin{smallmatrix} S_0 &0\\0&0\end{smallmatrix}))$ denotes the
orthogonal group of the form $S:=(\begin{smallmatrix} S_0
  &0\\0&0\end{smallmatrix})$. We assume $S_0\neq (0)$ since in this
case the orbit is reduced to one point.
Let $U\in O(S)$, and let us write
$U=(\begin{smallmatrix} U_{1,1}
  &U_{1,2}\\U_{2,1}&U_{2,2}\end{smallmatrix})$
according to the block structure of $S$. Then, a straightforward
verification shows that the condition $U^t
SU=S$ translates to: $U_2=0$ and $U_1\in O(S_0)$. There are no
constraints on $U_3$; the only condition on $U_4$ is $U_4\in
\Gl(n-n_0,F)$ where $n_0=\rk(S_0)$ (so that $U\in \Gl(n,F)$). Putting
everything together, we find that 
$\dim(O(S))=\dim(O(S_0))+n_0(n-n_0)
+\dim(\Gl(n-n_0))=\dim(O(S_0))+n(n-n_0)$, and the corresponding orbit
in $\RR_4$ has dimension $nn_0-\dim(O(s_0))=nn_0-n_0(n_0-1)/2$. 
\end{proof}

\item Theorem~\ref{thm:infinite} follows also from
  Proposition~\ref{prop:sidon} and Theorem~\ref{thm:sidon}. However
  the alternative proof given in Section~\ref{sec:transcendental} serves to
  illustrate that versions of Vosper's theorem for an infinite
  dimensional extension will require a finite-dimensional theorem as a
  stepping stone.

\item What is the minimum dimension of the square of a Sidon space~? This
question is quite intriguing and very much open even in the case of
finite fields.
It is likely that the upper bound claimed in Theorem \ref{Theorem main} is far from tight,
and that it would be possible to greatly improve it by pursuing the
Delsarte methodology further. For finite fields this would yield a lower bound on the
minimum dimension of the square of a Sidon space that improves upon
Theorem~\ref{thm:sidon}. 

\item The central question left open is that of a complete characterisation of
  the critical spaces achieving equality in the Cauchy-Davenport
  inequality of Theorem~\ref{thm:Cauchy}. The example at the end of
  section~\ref{sec:sidon} shows that it is not true for all base
  fields that all such
  pairs $\{S,T\}$ of spaces have bases in geometric progression
  whenever $\dim(S),\dim(T), \codim(ST) \geq 2$.
\end{itemize}

\paragraph{{\bf Acknowledgements.}}
We are indebted to Vincent Beck, Jean-Marc Couveignes, Alain Couvreur and Qing Liu
for valuable comments.

\section{Appendix: the computation of the needed values of $\frac{\chi_s(t)}{|\OO_s|}$}

In this appendix, we compute explicit expressions for $\chi_s(t)/|\OO_s|$ when $t=(1,1)$,
$(2,0)$ and $(2,2)$. We start by recalling the well known  formulas for the
number of zeroes of  a quadratic form over a finite field, in terms of its type
(see \cite[Theorem 11.5]{Tay}).

\begin{lemma}\label{prop NQ}
 For $Q\in \QQ_n$, let
\begin{equation*}
Z_Q=|\{x\in \F_q^n \ : \ Q(x)=0\}|.
\end{equation*}
The value of $Z_Q$ only depends on the dimension $n$ and the type $t=(r,e)$ of $Q$, and will be
denoted $Z_{n,t}$. We have:
\begin{equation*}
Z_{n,t}=q^{n-1}+(1-e)q^{n-r/2-1}(q-1).
\end{equation*}
\end{lemma}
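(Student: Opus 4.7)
The plan is to use the $\Gl(n,q)$-invariance of $Z_Q$ to reduce to the canonical forms listed in Proposition \ref{proposition equivalence Q}, and then establish the formula by induction on the rank $r$.

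First, I would peel off the radical. If $Q$ has rank $r$, decompose $\F_q^n = \Rad_Q \oplus V$ with $\dim V = r$; then a point $u+v$ with $u \in \Rad_Q$, $v \in V$ is a zero of $Q$ if and only if $v$ is a zero of the non-degenerate restriction $Q|_V$. Since $u$ is arbitrary, this gives
\begin{equation*}
Z_{n,(r,e)} = q^{n-r} \, Z_{r,(r,e)},
\end{equation*}
reducing the problem to counting zeros of non-degenerate forms of rank $r$ in $r$ variables.

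For the non-degenerate case with $r \geq 2$, I would take a canonical representative of the form $Q = x_1 x_2 + Q'(x_3, \dots, x_r)$, where $Q'$ has rank $r-2$ and carries the same anisotropic type as $Q$ (so $Q'$ is of type $(r-2,e)$). Splitting the count according to whether $x_1 = 0$: when $x_1 = 0$, $x_2$ is free and $(x_3,\dots,x_r)$ must be a zero of $Q'$, contributing $q \cdot Z_{r-2,(r-2,e)}$; when $x_1 \neq 0$, the value $x_2 = -Q'(x_3,\dots,x_r)/x_1$ is uniquely determined by the remaining $r-2$ free variables, contributing $(q-1) q^{r-2}$. This yields the recursion
\begin{equation*}
Z_{r,(r,e)} = q \, Z_{r-2,(r-2,e)} + (q-1) q^{r-2}.
\end{equation*}

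The base cases are handled directly: $Z_{1,(1,0)} = 1$ from $x_1^2 = 0$ (or $-b x_1^2 = 0$ in odd characteristic); $Z_{2,(2,1)} = 2q-1$ from $x_1 x_2 = 0$; and $Z_{2,(2,-1)} = 1$ for the anisotropic plane, whose only zero is the origin thanks to the defining property of $b$ in both parities of $q$ (either $b$ is a non-square when $p \neq 2$, or $y^2+y+b=0$ has no root in $\F_q$ when $p=2$). Each matches $q^{r-1} + e q^{r/2-1}(q-1)$, with the convention that the second summand vanishes when $e = 0$. The induction step is then the direct calculation
\begin{equation*}
q\bigl(q^{r-3} + e q^{(r-2)/2-1}(q-1)\bigr) + (q-1) q^{r-2} = q^{r-1} + e q^{r/2-1}(q-1),
\end{equation*}
valid for both parities of $r$ (with $e = 0$ when $r$ is odd, so the fractional exponent is never actually read). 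Combining with $Z_{n,(r,e)} = q^{n-r} Z_{r,(r,e)}$ gives the stated formula $q^{n-1} + e q^{n-r/2-1}(q-1)$. The argument is routine and I do not anticipate any real obstacle; the only mild care lies in verifying the base case $r = 2, e = -1$ uniformly in the characteristic of $F$.
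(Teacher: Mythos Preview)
Your argument is correct and complete. The paper itself does not prove this lemma: it simply records the formula and refers the reader to \cite[Theorem~11.5]{Tay}. What you have written is exactly the standard textbook proof one would find there or in any similar reference --- strip off the radical to reduce to the non-degenerate case, then use the hyperbolic splitting $x_1x_2 + Q'$ to set up a two-step recursion in the rank, with the three small base cases $r=1$ and $r=2$ (both types) checked by hand. The parenthetical ``or $-bx_1^2$'' in the $r=1$ base case is unnecessary, since under the full group $G=\F_q^*\times\Gl(n,q)$ the only rank-one representative is $x_1^2$, but it does no harm.
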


The wanted results are summarized in the following proposition. 
 It turns out that the formulas for $\chi_s(t)/|\OO_s|$ are slightly different depending on the
parity of $q$.

\begin{proposition}\label{prop Pst} We have, for $s=(r,e)$,
\begin{enumerate}
\item \underline{Case $t=(1,1)$}
\begin{enumerate}
\item If $p=2$ 
\begin{equation*}
\displaystyle \frac{\chi_s(t)}{|\OO_s|}=\left\{
\begin{array}{cl}
\displaystyle \frac{-1}{q^n-1} & \text{ if } e=1,2\\
1  & \text{ if } e=0
\end{array}
\right.
\end{equation*}

\item If $p\neq 2$ 
\begin{equation*}
\displaystyle \frac{\chi_s(t)}{|\OO_s|}=\frac{(1-e)q^{n-r/2}-1}{q^n-1}  
\end{equation*}

\end{enumerate}

\item \underline{Case $t=(2,0)$}
\begin{enumerate}
\item If $p=2$ 
\begin{equation*}
\displaystyle \frac{\chi_s(t)}{|\OO_s|}=\left\{
\begin{array}{cl}
\displaystyle \frac{q^{2n-r-1}-2q^{n-1}+1}{(q^n-1)(q^{n-1}-1)} & \text{ if } e=1,2\\
\displaystyle \frac{q^{2n-r-1}-q^{n}-q^{n-1}+1}{(q^n-1)(q^{n-1}-1)} & \text{ if } e=0
\end{array}
\right.
\end{equation*}

\item If $p\neq 2$ 
\begin{equation*}
\frac{\chi_s(t)}{|\OO_s|}= \frac{q^{2n-r-1}-2q^{n-1}+1 -(1-e)q^{n-r/2-1}(q-1)}{(q^n-1)(q^{n-1}-1)}
\end{equation*}
\end{enumerate}

\item \underline{Case $t=(2,2)$}
\begin{enumerate}
\item If $p=2$ 
\begin{equation*}
\displaystyle \frac{\chi_s(t)}{|\OO_s|}=\left\{
\begin{array}{cl}
\displaystyle \frac{-q^{2n-r-1}+1}{(q^n-1)(q^{n-1}-1)} & \text{ if } e=1\\
\displaystyle \frac{q^{2n-r-1}-q^n-q^{n-1}+1}{(q^n-1)(q^{n-1}-1)}  & \text{ if } e=0\\
\displaystyle \frac{q^{2n-r-1}+1}{(q^n-1)(q^{n-1}-1)}  & \text{ if }e=2
\end{array}
\right.
\end{equation*}

\item If $p\neq 2$ 
\begin{equation*}
\displaystyle \frac{\chi_s(t)}{|\OO_s|}=\left\{
\begin{array}{cl}
\displaystyle \frac{-q^{2n-r-1}+1}{(q^n-1)(q^{n-1}-1)} & \text{ if } e=1\\
\displaystyle \frac{q^{2n-r-1}+1-(1-e)q^{n-r/2-1}(q+1)}{(q^n-1)(q^{n-1}-1)}  & \text{ if } e=0,\,2
\end{array}
\right.
\end{equation*}

\end{enumerate}
\end{enumerate}
\end{proposition}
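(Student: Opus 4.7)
The plan is to evaluate each $\chi_s(t)$ by expanding $(Q,B)$ as a character sum in entries of $B$, symmetrizing over the $\Gl(n,q)$-action, and reducing to the zero-counting formula of Lemma \ref{prop NQ}. Taking representatives $Q=x_1^2,\ x_1x_2,\ Q_0(x_1,x_2)$ and applying \eqref{def pairing}, the function $B\mapsto (Q,B)$ is $\alpha$ of a short linear expression in the entries $b_{ij}$: namely $\alpha(b_{11})$, $\alpha(b_{12})$, and either $\alpha(b_{11}-bb_{22})$ or $\alpha(b_{11}+b_{12}+bb_{22})$ according to the parity of $p$.

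Because $\Gl(n,q)\subset G$ acts on $\SSS_n$ by $B\mapsto u^tBu$ and stabilizes $\OO_s$, for any $v\ne 0$ the sum $\sum_{B\in\OO_s}\alpha(B(v,v))$ is independent of $v$, and an analogous statement holds for sums indexed by ordered pairs $(v,w)$ of linearly independent vectors. Averaging $v$ over $\F_q^n\setminus\{0\}$ and swapping summations rewrites
\[
\chi_s((1,0))\;=\;\frac{1}{q^n-1}\Bigl(\sum_{B\in\OO_s}\sum_{v\in\F_q^n}\alpha(B(v,v))\;-\;|\OO_s|\Bigr),
\]
with entirely analogous formulas for $t=(2,\pm 1)$ obtained by averaging over ordered pairs. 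The inner sum $\sum_v\alpha(B(v,v))$ is a classical Gauss-type quantity whose value depends only on the type $s$ of $B$: when $p\ne 2$ the map $v\mapsto B(v,v)$ is itself a non-degenerate quadratic form of type $(r,e)$ after descent to a complement of its radical, yielding $e\,q^{n-r/2}$ in even rank and a Gauss-sum twist in odd rank; when $p=2$ the same map is $\F_q$-semilinear, and the sum is obtained by counting preimages of the linear form $v\mapsto \sum_i b_{ii}v_i^2$. In either regime the answer is expressible through $Z_{n,s}=q^{n-1}+eq^{n-r/2-1}(q-1)$ supplied by Lemma \ref{prop NQ}. Substituting and simplifying produces the rational expressions tabulated in the proposition, with the four-fold case split corresponding to the parities of $p$ and of $r$.

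The main obstacle is the characteristic-$2$ regime, where the correspondence $Q\leftrightarrow B_Q$ is no longer bijective and the inner sum becomes a count of affine hyperplanes of a semilinear form rather than the evaluation of a Gauss sum; this forces one to track the diagonal vector $(b_{11},\ldots,b_{nn})$ separately from the off-diagonal structure, producing the altered rational coefficients visible in the $p=2$ rows of the proposition. A secondary bookkeeping difficulty occurs in the case $t=(2,-1)$, where the two-variable sum depends on whether the $2$-plane $\langle v,w\rangle$ is hyperbolic or anisotropic with respect to the restriction of $B$; this refines the counting but can itself be expressed through Lemma \ref{prop NQ} applied to the restricted form, after which the closed expressions follow by routine, if tedious, simplification.
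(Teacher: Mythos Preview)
Your outline is essentially the paper's strategy: both reduce $\chi_s(t)$ to evaluating sums of $\alpha$ applied to $v^tB_sv$ (respectively to a bilinear/quadratic expression in $(v,w)$) over nonzero vectors (respectively linearly independent pairs), and then invoke the zero-count $Z_{n,\cdot}$ of Lemma~\ref{prop NQ}. The organisational difference is that the paper first sums over the $\F_q^*$-factor of $G$: using $\sum_{a\in\F_q^*}\alpha(a\cdot c)=q\cdot\mathbf 1_{c=0}-1$ converts the character sum into the pure counting problem $N_{s,t}=|\{u\in\Gl(n,q):b(Q_t,B_s^u)=0\}|$, so that no Gauss sums ever appear. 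Your route instead leaves a genuine Gauss-type evaluation $\sum_v\alpha(B(v,v))$ to be done by hand. Both work, but the paper's reduction is cleaner and avoids the quadratic-character bookkeeping in odd characteristic.

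Two points where your sketch drifts from what is actually needed. First, the claim that $\sum_v\alpha(B(v,v))$ ``depends only on the type $s$ of $B$'' is not quite right when $p$ is odd and $r$ is odd: within a single $G$-orbit there are two $\Gl(n,q)$-orbits, and the Gauss sum picks up a sign $\eta(a)$ according to the scalar $a\in\F_q^*$ used. The outer sum over $\OO_s$ then kills this by cancellation, so your argument survives, but the intermediate statement is false as written. Second, for $t=(2,\pm 1)$ the paper does \emph{not} restrict $B$ to the $2$-plane $\langle v,w\rangle$; instead it regards the expression $b(Q_t,B_s^u)$ as a quadratic form in $(v,w)\in\F_q^{2n}$ and determines its type $t''$ (e.g.\ $(2r,1)$ for $t=(2,1)$), so that Lemma~\ref{prop NQ} applies on $\F_q^{2n}$. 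Your ``hyperbolic versus anisotropic $2$-plane'' description could in principle be made to work but is more laborious, and for $t=(2,-1)$ in odd characteristic the sum over $(v,w)$ in fact factors as $\bigl(\sum_v\alpha(B(v,v))\bigr)\bigl(\sum_w\alpha(-bB(w,w))\bigr)$, which is simpler still.
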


\begin{proof}
In the following, $Q_t=\sum_{i\leq j} a_{i,j}^t x_i
x_j$ denotes the representative of quadratic forms of type $t$ given
in Proposition \ref{proposition equivalence Q}, and analogously $B_s=(
b_{i,j}^{s})$
is the representative of symmetric bilinear forms of
type $s$ from Proposition
\ref{proposition equivalence S}. The order of the stabilizer subgroup
$G_B$ of $B$ in $G$  depends only on the type $s$ of $B$; it will be
denoted $|G_s|$.
We introduce the following notation: if $Q=\sum_{i\leq j} a_{i,j}
x_ix_j$ and $B=(b_{i,j})$, we let $b(Q,B):=\sum_{i\leq j} a_{i,j}
b_{i,j}$. We recall that
$(Q,B)=\alpha(b(Q,B))$, where $\alpha:(\F_q,+)\to (\C^*,\times)$ is a fixed
non trivial character. Then, 
\begin{align*}
\chi_s(t)&= \sum_{B\in \OO_s} (Q_t,B) =\frac{1}{|G_s|} \sum_{g\in
  G} (Q_t,B_s^g)\\
&=\frac{1}{|G_s|} \sum_{u\in  \Gl(n,q)} \big(\sum_{a\in {\F_q^*}} (Q_t,B_s^{(a,u)})\big).
\end{align*}
Due to the standard properties of characters,
\begin{equation*}
\sum_{a\in {\F_q^*}} (Q_t, B_s^{(a,u)})=\left\{
\begin{array}{ll}
-1 & b(Q_t,B_s^u)\neq 0\\
q-1 & b(Q_t,B_s^u)=0
\end{array}
\right.
\end{equation*}
We introduce:
\begin{equation*}
N_{s,t}:=|\{u\in \Gl(n,q) :\ b(Q_t,B_s^u)=0\}|
\end{equation*}
and we have:
\begin{equation}\label{eq Pst}
\chi_{s}(t)=\frac{1}{|G_s|}(qN_{s,t}-|\Gl(n,q)|)=\frac{|\OO_s|}{q-1}\Big(\frac{qN_{s,t}}{|\Gl(n,q)|}
-1\Big).
\end{equation}
 Let $\{v_1,\dots,v_n\}$ denote the column vectors of the matrix $u\in
  \Gl(n,q)$. We have
\begin{equation}\label{equa 1}
b(Q_t,B_s^u)=\sum_{i\leq j} a_{i,j}^t (v_i^t B_s v_j).
\end{equation}
We introduce the quadratic form
$Q(x):=x^tB_sx$ and will denote its type by $t'$; it can be
readily verified that,  if $p\neq 2$, $t'=s$, and if
$p=2$,  $t'=(1,1)$ for $e=1,2$ and $t'=0$ for
$e=0$. 

\noindent\underline{Case 1: $t=(1,1)$.} Then, $Q_t=x_1^2$ and 
$b(Q_t,B_s^u)=v_1^tB_s v_1$. So, with the notation above
and that of
Proposition \ref{prop NQ}, 
\begin{align*}
N_{s,t}&=(Z_{n,t'}-1)(q^n-q)\dots
(q^n-q^{n-1})\\
&=(Z_{n,t'}-1)\frac{|\Gl(n,q)|}{q^n-1}.
\end{align*}
The expressions for $\chi_s(t)/|\OO_s|$ follow then immediately from the formulas
for $Z_{n,t'}$ in Proposition \ref{prop NQ} and from \eqref{eq Pst}.

\smallskip

\noindent\underline{Case 2: $t=(2,0)$.} In this case, $Q_t=x_1x_2$ and 
$b(Q_t,B_s^u)=v_1^t B_s v_2$.  For $w=(v_1,v_2)\in \F_q^{2n}$, 
$Q(w):=v_1^t B_s v_2$ defines a quadratic form which can be
easily seen to be of type $(2r,0)$. So, the number of $w=(v_1,v_2)$ such
that $Q(w)=0$ is equal to $Z_{2n,(2r,0)}$. Because $v_1,v_2$ are the two first column
vectors of $u\in \Gl(n,q)$, they should not be linearly dependent. The
number of $(v_1,v_2)$ such that $Q(w)=0$ and $\{v_1,v_2\}$ are linearly
dependent
is: $2q^n-1+(q-1)(Z_{n,t'}-1)$ so we obtain:
\begin{equation*}
N_{s,t}=(Z_{2n,(2r,0)}-  (q-1)(Z_{n,t'}-1) -2q^n+1)\frac{|\Gl(n,q)|}{(q^n-1)(q^n-q)}.
\end{equation*}

\smallskip

\noindent\underline{Case 3: $t=(2,2)$.} 
In this case, $Q_t$ depends on the parity of $p$.
\smallskip
If $p=2$, $Q_t=x_1^2+x_1x_2+bx_2^2$ and $b(Q_t,B_s^u)=v_1^t B_s v_1+ v_1^t B_s v_2+b v_2^t B_s v_2$.
 For $w=(v_1,v_2)\in \F_q^{2n}$, let $Q(w):=v_1^t B_s v_1+ v_1^t
 B_s v_2+b v_2^t B_s v_2$. 
Again, we count the
number of $(v_1,v_2)$ such that $Q(w)=0$ and $\{v_1,v_2\}$ are linearly
dependent, and find $1+(q+1)(Z_{n,t'}-1)$. We need to determine the
type $t''$ of $Q$. With some work, we find:
\begin{align*}
&\text{If } s=(r,0),\, t''=(2r,0)\\
&\text{If } s=(r,1),\, t''=(2r,2)\\
&\text{If } s=(r,2),\, t''=(2r,0)
\end{align*}
so 
\begin{equation*}
N_{s,t}=(Z_{2n,t''}-  (q+1)(Z_{n,t'}-1) -1)\frac{|\Gl(n,q)|}{(q^n-1)(q^n-q)}.
\end{equation*}

\smallskip
If $p\neq 2$, $Q_t=x_1^2-bx_2^2$ and $b(Q_t,B_s^u)=v_1^t  B_s v_1 -b v_2^t B_s v_2$.
 For $w=(v_1,v_2)\in \F_q^{2n}$, let $Q(w):=v_1^t B_s v_1-b v_2^t B_s v_2$. 
The number of $(v_1,v_2)$ such that $Q(w)=0$ and $\{v_1,v_2\}$ are linearly
dependent is as in the previous case of $p=2$ equal to
$1+(q+1)(Z_{n,t'}-1)$. We find the same type $t''$ for $Q$ so again
\begin{equation*}
N_{s,t}=(Z_{2n,t''}-  (q+1)(Z_{n,t'}-1) -1)\frac{|\Gl(n,q)|}{(q^n-1)(q^n-q)}.
\end{equation*}
\end{proof}

\end{document}